\numberwithin{equation}{section}
\theoremstyle{theorem}
\newtheorem{proposition}{Proposition}
\newtheorem{theorem}{Theorem} 
\newtheorem{lemma}{Lemma}
\theoremstyle{definition}
\newtheorem{assumption}{Assumption} 
\theoremstyle{remark}
\newtheorem{example}{Example} 
\newtheorem{remark}{Remark}
\newcommand\mikko[1]{\textcolor{black}{#1}}
\begin{document}
\title{Limit Theorems for Trawl Processes} 
\author[1]{Mikko \mikko{S.} Pakkanen\thanks{m.pakkanen@imperial.ac.uk}} 
\author[2]{Riccardo Passe\mikko{g}geri\thanks{riccardo.passeggeri@gmail.com}} 
\author[3]{Orimar Sauri\thanks{osauri@math.aau.dk}} 
\author[1]{Almut E. D. Veraart\thanks{a.veraart@imperial.ac.uk}} 
\affil[1]{Department of Mathematics, Imperial College London\protect\\ \mikko{South Kensington Campus}, London, SW7 2AZ, UK}
\affil[2]{LPSM, Sorbonne University, 4 Place Jussieu, Paris, 75005, France}
\affil[3]{Department of Mathematical Sciences, Aalborg University \protect\\ Skjernvej 4A, 9220, Aalborg, Denmark} 
\date{This Version: \today}
\maketitle
\begin{abstract}
In this work we derive limit theorems for trawl processes. First,
we study the asymptotic behaviour of the partial sums of the discretized
trawl process $(X_{i\Delta_{n}})_{i=0}^{\lfloor nt\rfloor-1}$, under the assumption that as $n\uparrow\infty$, $\Delta_{n}\downarrow0$ and $n\Delta_{n}\rightarrow\mu\in[0,+\infty]$. Second, we derive a functional limit theorem for trawl processes as the L\'{e}vy measure of the trawl seed grows to infinity and show that the limiting process has a Gaussian moving average representation.
\end{abstract}

\section{Introduction}

In this paper, we \mikko{study probabilistic limit theorems for a class of stationary infinitely divisible stochastic processes called \emph{trawl processes}, which were introduced for the first time in 2011 by Barndorff-Nielsen \cite{BN2011}.} \mikko{By construction, a trawl} process allows for both a very flexible autocorrelation structure and the possibility of generating any kind of marginal distribution within the class of  infinitely divisible distributions.  In particular, the marginal distribution and the autocorrelation structure can be modelled independently \mikko{of} each other. \mikko{Often the marginal distribution is chosen among infinitely divisible distributions on positive integers, with a view to applying the process as a model of serially correlated temporal count data, although in general such an assumption is not necessary.}

\mikko{Barndorff-Nielsen et al.\ \cite{BNLundShepVerr14}} provide the first systematic study of trawl processes, investigating their probabilistic properties and analysing volatility modulation within this framework.  Since this paper appeared, there has been an \mikko{increasing interest in} trawl processes\mikko{, covering a wide range of issues ranging from} applications \mikko{to} theoretical investigations\mikko{, and for the convenience of the reader} we provide here a brief \mikko{review of the recent literature on these} processes.

\mikko{Prior to the present paper, several limit theorems for trawl processes have been derived.} \mikko{Doukhan et al.\ \cite{DoukJakLopSurg18} characterize} a class of discrete time stationary trawl processes and study \mikko{the functional limits of} their partial sums. \mikko{Grahovac et al.\ \cite{GRAHOVAC2018235} investigate} the intermittency property of trawl process\mikko{, while Paulauskas \cite{Paulau}  investigates} trawl processes (and general linear processes) with tapered innovations. \mikko{Additionally, Talarczyk and Treszczotko \cite{TalTre2019} study} limit theorems for integrated trawl processes with symmetric L\'{e}vy bases.

\mikko{In a more applied realm,} Noven et al.\ \cite{Noven} \mikko{develop} a latent trawl process model for extreme values and appl\mikko{y} it to environmental time series. This work is partially extended \mikko{by Courgeau and Veraart} \cite{Courgeau}, \mikko{who derive} an asymptotic theory for \mikko{inference on} the latent trawl model for extreme values. \mikko{F}urther work in the direction of extreme values \mikko{has been done by Bacro et al.\ \cite{Bacro}, who propose} hierarchical space-time modelling of asymptotically independent exceedances based on a space-time extension of the trawl proces\mikko{s} and apply their model to precipitation data. \mikko{In finance, Shephard and Yang} \cite{SheYan} \mikko{and Veraart \cite{VERAART2019}} adapt the trawl process to provide a coherent \mikko{statistical} model \mikko{of} high-frequency data\mikko{, while t}he suitability of trawl processes \mikko{for the} modelling \mikko{of} high-frequency data is \mikko{further} corroborated by the results \mikko{of Rossi and Santucci de Magistris} \cite{RosSan}.
 
\mikko{With regards to estimation methodology} for trawl processes, in addition to the \mikko{afore}mentioned works \cite{Bacro,BNLundShepVerr14,Courgeau,Noven,SheYan}, \mikko{Doukhan \cite{Doukhan2020} introduces}  spectral estimation for non-linear long range dependent discrete time trawl processes, and \mikko{Shephard and Yang \cite{Shephard2016} develop} likelihood inference for exponential-trawl processes.

\mikko{In our paper} we \mikko{study two types of} limit theorems for trawl processes. \mikko{Our first main} result \mikko{concerns} the asymptotic behaviour of the partial sums of the discretized trawl process as the \mikko{size of the} discretization \mikko{step} goes to zero. In particular, let $L$ be an homogeneous L\'{e}vy basis on $\mathbb{R}^2$, let $a:\mathbb{R}^{+}\rightarrow\mathbb{R}^{+}$
be a non-increasing integrable function and let $A=\left\{ (r,y):r\leq0,0\leq y\leq a(-r)\right\}$. Then, $X_{t}:=L(A_{t}),\,\,\,t\in\mathbb{R}$, where $A_{t}:=A+(t,0)$, is termed the \textit{trawl process}. Let $(\Delta_n)_{n\in\mathbb{N}}$ be a sequence of non-negative constants such that $\Delta_{n}\downarrow0$ and $n\Delta_{n}\rightarrow\mu\in[0,+\infty]$ as $n\uparrow\infty$. We study the asymptotic behaviour of the (properly rescaled) functional
\[
\mathbf{S}^n = \Bigg(\sum_{k=0}^{\lfloor nt\rfloor-1}(X_{\Delta_n k}-\mathbb{E}(X_{\Delta_n k}))\Bigg)_{t\geq0}.
\]
The asymptotic behaviour of this functional depends on the value of $\mu$. Thus, we divide our analysis in three cases $0<\mu<\infty$, $\mu=0$, and $\mu=+\infty$. When $0<\mu<\infty$ we obtain that the above functional becomes a Riemann sum and thus we obtain a functional convergence in probability to $\int_{0}^{t\mu}X_{t}-\mathbb{E}(X_{t})ds$. 
\mikko{In the case $\mu=0$ it turns out that the behaviour of $\mathbf{S}^n$ depends on the increments of $X$ around 0. Based on this, we show that $\mathbf{S}^n$, after centering and properly rescaling, converges stably to certain stochastic integral driven by two independent L\'evy processes.}

\mikko{Lastly, w}hen $\mu=+\infty$ the limit depends on whether the trawl process \mikko{$X$} has short or long memory. \mikko{Under short memory}, we \mikko{show} that, when properly scaled, \mikko{$\mathbf{S}^n$} converges to a Brownian motion. \mikko{In contrast, when $X$ exhibits long memory} we have to further distinguish whether the Gaussian component of the trawl process is \mikko{present or not}. If the Gaussian component is present, then \mikko{$\mathbf{S}^n$ under proper scaling converges towards} a fractional Brownian motion \mikko{w}ith Hurst parameter $H>1/2$. 
\mikko{Interestingly, if the Gaussian component is absent, the limit is no longer Gaussian and} the rate of convergence \mikko{of $\mathbf{S}^n$ is governed by} the Blumenthal-Getoor index of the trawl process. \mikko{We note that these findings agree with those obtained by Grahovac et al.\ \cite{GLST2019} on superpositions of Ornstein-Uhlenbeck type processes.}

\mikko{Our second main result} is a functional \mikko{limit theorem} for trawl processes \mikko{that links them with stationary Gaussian processes}. In particular, we show that the sequence of \mikko{scaled} trawl processes converges in distribution\mikko{,} as their L\'{e}vy measure\mikko{s} tend to infinity\mikko{,} \mikko{to} a limiting process which \mikko{admits} a Gaussian moving average representation. Moreover, we are able to show explicitly the relation between the function that determines the \mikko{upper boundary of the} trawl set, namely the function $a(\cdot)$ introduced \mikko{above}, and the kernel function of the Gaussian moving average. We \mikko{stress} that the existence of the Gaussian moving average representation of the limiting process is explicitly proved.



The paper is structured as follows. Section \ref{PreliminariesLBID} \mikko{lays out the notations used throughout the} paper \mikko{and discusses some essential} preliminaries. \mikko{In Sections \ref{Section-partial sums} and \ref{Sec-Moving average} we formulate the main results of the paper, concerning the asymptotics of partials sums of trawl processes and the convergence of a sequence of trawl processes to a Gaussian moving average, respectively.}
\mikko{For the sake of ease of exposition, we defer} the proofs of the\mikko{se} results to the end of the paper, namely to Section \ref{sec:Proofs}. Finally, the Appendix contains the computation of the fourth moment of the trawl process.
\section{Preliminaries\label{PreliminariesLBID}}

This part is devoted to introduce the basic notations as well as to
recall several basic results and concepts that will be used through
this paper.

\subsection{Functions of regular variation }

A function $g:\mathbb{R}^{+}\rightarrow\mathbb{R}^{+}$ is said to
be\textit{ regularly varying at $\infty$} with index $\alpha\in\mathbb{R}$
if as $t\rightarrow\infty$
\[
\frac{g(tx)}{g(t)}\rightarrow x^{-\alpha},\,\,\,\forall\,x>0.
\]
In this case we will write $g\in\mathrm{RV}_{\alpha}^{\infty}$. If
we replace $t\rightarrow\infty$ by $t\rightarrow0^{+}$ in the previous
equation, then $g$ is called \textit{regularly varying at $0$} and
in this case we denote this as $g\in\mathrm{RV}_{\alpha}^{0}$. In
the previous definitions $\alpha=0$, typically we will refer to $g$
as \textit{slowly varying}. It is well known that if $g\in\mathrm{RV}_{0}^{\infty}$,
then as $x\rightarrow\infty$
\[
g(x)x^{\varepsilon}\rightarrow\begin{cases}
+\infty & \text{if }\varepsilon>0;\\
0 & \text{if }\varepsilon<0.
\end{cases}
\]
One of the key results for functions of regular variation is the so
\textit{Karamata's Theorem} (KT for short) which states that if $g\in\mathrm{RV}_{\alpha}^{\infty}$
and locally bounded in $[x_{0},+\infty)$ then 
\begin{enumerate}
\item If $\rho\geq\alpha-1$, it holds that 
\[
\frac{1}{x^{\rho+1}g(x)}\int_{x_{0}}^{x}g(s)s^{\rho}\mathrm{d}s\rightarrow\frac{1}{\rho-\alpha+1},\,\,\,x\rightarrow\infty.
\]
\item For every $\rho<\alpha-1$, we have that 
\[
\frac{1}{x^{\rho+1}g(x)}\int_{x}^{\infty}g(s)s^{\rho}\mathrm{d}s\rightarrow\frac{1}{\alpha-1-\rho},\,\,\,x\rightarrow\infty.
\]
\end{enumerate}
For a complete exposition on the basic properties of functions of
regular variation we refer the reader to \cite{BinghamGoldTeu87}.

\subsection{Stable convergence}

For the rest of this paper $\left(\Omega,\mathcal{F},\mathbb{P}\right)$
will denote a complete probability space. The notations $\overset{\mathbb{P}}{\rightarrow}$
and $\overset{d}{\rightarrow}$ stand, respectively, for convergence
in probability and distribution of random vectors (r.v.'s for short).
If $X_{n}/Y_{n}$$\overset{\mathbb{P}}{\rightarrow}$0 when $n\rightarrow\infty$,
we write $X_{n}=\mathrm{o}_{\mathbb{P}}(Y_{n})$ . Given a sub-$\sigma$-field
$\mathcal{G}\subseteq\mathcal{F}$ and a sequence of r.v.'s $(\xi_{n})_{n\geq1}$
on $\left(\Omega,\mathcal{F},\mathbb{P}\right)$, by\textit{ $\mathcal{G}$-stably
convergence in distribution} of $\xi_{n}$ towards a random vector
(r.v. for short) $\xi$ (in symbols $\xi_{n}\overset{\mathcal{G}\text{-}d}{\longrightarrow}\xi$),
we mean that, conditioned on any non-null event in $\mathcal{G}$,
$\xi_{n}$$\overset{d}{\rightarrow}\xi$. In this framework, if $(X_{t}^{n})_{t\in\mathbb{R},n\in\mathbb{N}}$
is a family of stochastic processes, we will write $X^{n}\overset{\mathcal{G}\text{-}fd}{\longrightarrow}X$
if the finite-dimensional distributions (f.d.d. for short) of $X^{n}$
converge $\mathcal{G}$-stably toward the f.d.d. of $X$. We further
write $X^{n}\overset{\mathcal{G}\text{-}\mathcal{D}[0,T]}{\Longrightarrow}X$,
if $X^{n}$ converges to $X$ in the Skhorohod topology and $X^{n}\overset{\mathcal{G}\text{-}fd}{\longrightarrow}X$.
We refer the reader to \cite{HauslerLuschgy15} for a concise exposition
of stable convergence.

\subsection{L\'{e}vy bases and infinite divisibility}

Let $\mu$ be a measure on $\mathcal{B}(\mathbb{R}^{d})$, the Borel
sets on $\mathbb{R}^{d}$, and let $\mathcal{B}_{b}^{\mu}(\mathbb{R}^{d}):=\{A\in\mathcal{B}(\mathbb{R}^{d}):\mu(A)<\infty\}.$
The family $L=\{L\left(A\right):A\in\mathcal{B}_{b}^{\mu}(\mathbb{R}^{d})\}$
of real-valued r.v.'s will be called a \textit{L\'{e}vy basis} if it is
an infinitely divisible (ID for short) independently scattered random
measure, that is, $L$ is $\sigma$-additive almost surely and such
that for any $A,B\in\mathcal{B}_{b}^{\mu}(\mathbb{R}^{d})$, $L(A)$
and $L(B)$ are ID r.v.'s that are independent whenever $A\cap B=\emptyset$.
The cumulant of a r.v. $\xi$, in case it exists, will be denoted
by $\mathcal{C}(z\ddagger\xi):=\log\mathbb{E}(e^{iu\xi})$. We will
say that $L$ is \textit{separable} with \textit{control measure}
$\mu$, if 
\[
\mathcal{C}(z\ddagger L(A))=\mu(A)\psi(z),\,\,\,A\in\mathcal{B}_{b}^{\mu}(\mathbb{R}^{d}),z\in\mathbb{R},
\]
where 
\[
\psi(z):=i\gamma z-\frac{1}{2}b^{2}z^{2}+\int_{\mathbb{R}\backslash\{0\}}(e^{izx}-1-izx\mathbf{1}_{\left|x\right|\leq1})\nu(\mathrm{d}x),\,\,\,z\in\mathbb{R},
\]
with $\gamma\in\mathbb{R},$ $b\geq0$ and $\nu$ is a L\'{e}vy measure,
i.e. $\nu(\{0\})=0$ and $\int_{\mathbb{R}\backslash\{0\}}(1\land\left|x\right|^{2})\nu(\mathrm{d}x)<\infty$.
When $\mu=Leb$, in which $Leb$ represents the Lebesgue measure on
$\mathbb{R}^{d}$, $L$ is called \textit{homogeneous}. The ID r.v.
associated to the characteristic triplet $\left(\gamma,b,\nu\right)$
is called the \textit{L\'{e}vy seed} of $L$ and will be denoted by $L'$.
As usual, $\left(\gamma,b,\nu\right)$ will be called the characteristic
triplet of $L$ and $\psi$ its characteristic exponent. The \textit{Blumenthal-Getoor
index} of an ID distribution with triplet $\left(\gamma,b,\nu\right)$,
is defined and denoted as 
\[
\beta_{\nu}:=\inf\left\{ \beta>0:\int_{\left|x\right|\leq1}\left|x\right|^{\beta}\nu(\mathrm{d}x)<\infty\right\} .
\]
Within this framework, we will also refer to $\beta_{\nu}$ as the
Bluementhal-Getoor index of a homogeneous L\'{e}vy basis with characteristic
triplet $\left(\gamma,b,\nu\right)$. In this paper, the sigma field
generated by $L$ is denoted by $\mathcal{F}_{L}$.

For any L\'{e}vy measure $\nu$, we associate the functions $\nu^{\pm}:(0,\infty)\rightarrow\mathbb{R}^{\text{+}}$,
defined as $\nu^{+}(x):=\nu(x,\infty)$ and $\nu^{-}(x):=\nu(-\infty,-x).$
Let $K_{+}+K_{-}>0$ and $0<\beta<2$. A separable L\'{e}vy basis is called
strictly $\beta$-stable with parameters $(\beta,K_{+},K_{-},\gamma)$
if its L\'{e}vy seed is distributed according to a strictly $\beta$-stable
distribution, that is, the characteristic triplet of $L'$ has no
Gaussian component ($b=0$), its L\'{e}vy measure satisfies 
\[
\frac{\nu(\mathrm{d}x)}{\mathrm{d}x}=K_{+}\left|x\right|^{-1-\beta}\mathbf{1}_{\{x>0\}}+K_{-}\left|x\right|^{-1-\beta}\mathbf{1}_{\{x<0\}},
\]
and either $\gamma=(K_{+}-K_{-})/\left|\beta-1\right|$ when $\beta\neq1$,
or $\gamma$ arbitrary with $K_{+}=K_{-}$ in the case of $\beta=1$.
The characteristic exponent of a strictly $\beta$-stable with parameters
$(\beta,K_{+},K_{-},\gamma)$ admits the representation for every
$z\in\mathbb{R}$
\begin{equation}
\psi(z;\beta,K_{+},K_{-},\gamma):=\begin{cases}
-\sigma\left|z\right|^{\beta}(1-\mathbf{i}\rho\mathrm{sign}(z)\tan(\pi\beta/2)) & \text{if }\beta\neq1;\\
-K_{+}\pi\left|z\right|+\mathbf{i}\gamma z & \text{if }\beta=1,
\end{cases}\label{chfnctstrictlystabe}
\end{equation}
where 
\[
\sigma:=\frac{\Gamma(2-\beta)}{\beta(1-\beta)}\cos(\pi\beta/2)(K_{+}+K_{-}),\,\,\,\text{ and }\,\,\,\rho:=\frac{K_{-}-K_{+}}{K_{+}+K_{-}}.
\]
\[
\]

\subsection{Trawl processes}

Let $L$ be a homogeneous L\'{e}vy basis on $\mathbb{R}^{2}$ with characteristic
triplet $(\beta,b,\nu)$. In addition, let $a:\mathbb{R}^{+}\rightarrow\mathbb{R}^{+}$
be a non-increasing integrable function and put 
\[
A=\left\{ (r,y):r\leq0,0\leq y\leq a(-r)\right\} .
\]
The process defined by
\begin{equation}
X_{t}:=L(A_{t}),\,\,\,t\in\mathbb{R},\label{trawldef}
\end{equation}
where $A_{t}:=A+(t,0)$, is termed as a \textit{trawl process}. From
now on, we will refer to $A$ and $a$, as the trawl set and the trawl
function, respectively. It is well known that $X$ is strictly stationary
and, in the case when $L$ is square integrable, its auto-covariance
function is given by
\begin{equation}
\varGamma_{X}(h):=Var(L')\int_{\left|h\right|}^{\infty}a(u)\mathrm{d}u,\,\,\,h\in\mathbb{R}.\label{TrawlACF}
\end{equation}
Moreover, $\varGamma_{X}$ uniquely characterizes $a$. More precisely,
if $L$ is square integrable, and $X$ and $\tilde{X}$ are two trawls
processes associated to $L$ with trawls functions $a$ and $\tilde{a}$,
respectively, then $a=\tilde{a}$ a.e. if and only if
\[
\varGamma_{X}=\varGamma_{\tilde{X}}.
\]
For a detailed exposition on the basic properties of trawl processes
we refer to \cite{BNLundShepVerr14} and \cite{BNBEnthVeraat18}.

\section{Limit theorems for partial sums of trawl processes}\label{Section-partial sums}

In this section we focus on the limit theorems for the partial sums
of $(X_{i\Delta_{n}})_{i=0}^{n-1}$ under the assumption that as $n\uparrow\infty$,
$\Delta_{n}\downarrow0$ and $n\Delta_{n}\rightarrow\mu\in[0,+\infty].$
More specifically, we study the asymptotic behavior of the process
$\mathbf{S}^{n}=(S_{\left[nt\right]}^{\Delta_{n}})_{t\geq0}$ , where
\[
S_{m}^{\Delta}:=\sum_{k=0}^{m-1}(X_{\Delta k}-\mathbb{E}(X_{\Delta k})),\,\,\,m\in\mathbb{N},\Delta>0,
\]
with $X$ as in (\ref{trawldef}). Note that we will always assume
that the associated L\'{e}vy basis $L$ has characteristic triplet $(\gamma,b,\nu)$
and $\mathbb{E}(\left|L^{\prime}\right|)<\infty$ and that $a$ is
continuous in $[0,\infty)$. Furthermore, for the sake of exposition
all \mikko{of} our proof\mikko{s} are presented in Section \ref{sec:Proofs}.

\subsection{Main results}

Through this part we state our main results \mikko{concerning} $\mathbf{S}^{n}.$
As expected, the rate of convergence will depend entirely on the sampl\mikko{ing}
scheme, which is in turn represented by $\mu$. In what follows we
will use the notation 
\[
\tilde{X_{t}}:=X_{t}-\mathbb{E}(X_{t}),\,\,\,t\in\mathbb{R}.
\]

\subsubsection{$0<\mu<\infty$}

Let us start assuming that $n\Delta_{n}\rightarrow\mu\in(0,\infty)$.
In this situation the points
\[
t_{i}=i\Delta_{n},\,\,\,i=0,\ldots,\left[nt\right]-1,
\]
form a partition of $[0,t\mu]$. Consequently, $\Delta_{n}\mathbf{S}_{t}^{n}$
becomes a Riemann sum for the mapping $s\mapsto\tilde{X}_{s}$. Based
on this observation, the following result is not surprising.

\begin{proposition}\label{propmufinite}

Suppose that $\mathbb{E}(\left|L^{\prime}\right|^{2})<\infty$ and
$\Delta_{n}n\rightarrow\mu\in(0,\infty)$. Then for every $V>0$
\[
\sup_{0\leq t\leq V}\left|\Delta_{n}\mathbf{S}_{t}^{n}-\int_{0}^{t\mu}\tilde{X_{s}}ds\right|\overset{\mathbb{P}}{\rightarrow}0.
\]

\end{proposition}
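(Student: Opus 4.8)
The plan is to set $Y^n_t := \Delta_n\mathbf S^n_t=\Delta_n\sum_{k=0}^{\lfloor nt\rfloor-1}\tilde X_{\Delta_n k}$ and to interpolate through the integral with the mismatched upper endpoint,
\[
I^n_t:=\int_0^{\lfloor nt\rfloor\Delta_n}\tilde X_s\,\mathrm ds .
\]
Since $Y^n_t=\sum_{k=0}^{\lfloor nt\rfloor-1}\int_{k\Delta_n}^{(k+1)\Delta_n}\tilde X_{k\Delta_n}\,\mathrm ds$, I would control separately the Riemann-sum error $Y^n_t-I^n_t$ and the endpoint mismatch $I^n_t-\int_0^{t\mu}\tilde X_s\,\mathrm ds$, each uniformly over $t\in[0,V]$.

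For the first and main piece I would pass to absolute values inside the sum, which decouples the bound from $t$:
\[
\sup_{0\le t\le V}\bigl|Y^n_t-I^n_t\bigr|\le\sum_{k=0}^{\lfloor nV\rfloor-1}\int_{k\Delta_n}^{(k+1)\Delta_n}\bigl|\tilde X_{k\Delta_n}-\tilde X_s\bigr|\,\mathrm ds ,
\]
the point being that the $t$-dependent summation range is replaced by its maximal value $\lfloor nV\rfloor-1$. Taking expectations (Fubini applies by joint measurability), using stationarity and Cauchy--Schwarz together with (\ref{TrawlACF}), which yields $\mathbb E[(\tilde X_h-\tilde X_0)^2]=2\,\mathrm{Var}(L')\int_0^{|h|}a(u)\,\mathrm du$, each increment over a window of length at most $\Delta_n$ is bounded in $L^1$ by $\bigl(2\,\mathrm{Var}(L')\int_0^{\Delta_n}a(u)\,\mathrm du\bigr)^{1/2}$, independently of $k$ and $s$. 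Hence the expectation of the right-hand side is at most $\lfloor nV\rfloor\,\Delta_n\,\bigl(2\,\mathrm{Var}(L')\int_0^{\Delta_n}a(u)\,\mathrm du\bigr)^{1/2}$; since $\lfloor nV\rfloor\Delta_n\to V\mu<\infty$ stays bounded while $\int_0^{\Delta_n}a(u)\,\mathrm du\to0$ by integrability of $a$, this tends to $0$, giving $L^1$- and hence in-probability convergence of the supremum.

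For the endpoint mismatch I would first observe that $\lfloor nt\rfloor\Delta_n\to t\mu$ uniformly on $[0,V]$: from $\lfloor nt\rfloor\Delta_n-t\mu=\Delta_n(\lfloor nt\rfloor-nt)+t(n\Delta_n-\mu)$ the first term is $O(\Delta_n)$ and the second is $O(V|n\Delta_n-\mu|)$, so $\varepsilon_n:=\sup_{0\le t\le V}|\lfloor nt\rfloor\Delta_n-t\mu|\to0$. Both endpoints lie in the compact interval $[0,V\mu+1]$ for large $n$ (here the assumption $\mu<\infty$ enters), and they differ by at most $\varepsilon_n$, so $|I^n_t-\int_0^{t\mu}\tilde X_s\,\mathrm ds|$ is dominated by the oscillation over windows of length $\varepsilon_n$ of the a.s.\ finite, absolutely continuous map $v\mapsto\int_0^v|\tilde X_s|\,\mathrm ds$ (a.s.\ finiteness follows from $\mathbb E|\tilde X_0|<\infty$ and stationarity). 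Uniform continuity of this map on the compact then forces the mismatch to $0$ a.s., uniformly in $t$.

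The main obstacle is securing uniformity in $t$ in the Riemann-sum step; the device that resolves it is the absolute-value bound above, which removes the $t$-dependence and reduces everything to the single modulus-of-continuity estimate $\int_0^{\Delta_n}a\to0$. Without it one would instead need a separate tightness argument for $(Y^n)$. The remaining ingredients are routine consequences of square integrability and the finiteness of $\mu$.
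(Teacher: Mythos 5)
Your proof is correct and takes essentially the same approach as the paper: the same split into a Riemann-sum error and an endpoint mismatch, the same device of enlarging the sum to $\lfloor nV\rfloor$ terms to obtain uniformity in $t$, an expectation bound driven by the increment variance $2\,\mathrm{Var}(L')\int_0^{\Delta_n}a(u)\,\mathrm{d}u$, and the endpoint term handled via the uniform convergence $\lfloor nt\rfloor\Delta_n\to t\mu$ together with almost-sure local integrability of the path. The only cosmetic differences are that you work in $L^1$ (Cauchy--Schwarz applied to each increment) where the paper squares and applies Jensen's inequality, and that you control the endpoint term by the oscillation of $v\mapsto\int_0^v|\tilde{X}_s|\,\mathrm{d}s$ while the paper uses Cauchy--Schwarz against the a.s.\ finite $\int_0^C X_s^2\,\mathrm{d}s$.
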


\subsubsection{$\mu=0$}

Let us now turn our attention to the case when $\mu=0$. Intuitively,
when this occur one should expect that
\[
X_{\Delta_{n}n}\approx X_{\Delta_{n}i}\approx X_{0},\,\,\,i=0,1,\ldots,n-1,
\]
for $n$ large, which suggests that 
\[
\frac{1}{n}S_{n}^{\Delta_{n}}\approx\tilde{X}_{0}.
\]
This turn out to be true as the following result shows.

\begin{proposition}\label{porpndeltnzero}

Suppose that $\mathbb{E}(\left|L^{\prime}\right|)<\infty$ and $a$
is continuously differentiable on a neighborhood of $0$. If $\Delta_{n}n\rightarrow0$
as $n\rightarrow\infty$, then 
\[
\frac{1}{n}\mathbf{S}_{t}^{n}\overset{\mathbb{P}}{\rightarrow}t\tilde{X}_{0},\,\,\,t\geq0.
\]
\end{proposition}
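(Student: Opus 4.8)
The plan is to isolate the ``frozen'' value $\tilde X_0$ and to control the remainder in $L^1$. Writing $m_n:=\lfloor nt\rfloor$ and using stationarity (so that $\mathbb{E}X_{\Delta_n k}=\mathbb{E}X_0$ and hence $\tilde X_{\Delta_n k}-\tilde X_0=X_{\Delta_n k}-X_0$), I would decompose
\[
\frac{1}{n}\mathbf{S}_t^n=\frac{m_n}{n}\,\tilde X_0+\frac{1}{n}\sum_{k=0}^{m_n-1}\bigl(X_{\Delta_n k}-X_0\bigr).
\]
The first term converges to $t\tilde X_0$ almost surely, since $m_n/n\to t$ and $\tilde X_0$ is a fixed, integrable ($\mathbb{E}|L'|<\infty$) random variable. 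It therefore suffices to show that the remainder $R_n:=\frac{1}{n}\sum_{k=0}^{m_n-1}(X_{\Delta_n k}-X_0)$ tends to $0$ in probability, and for this I would prove $\mathbb{E}|R_n|\to 0$.

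Next I would exploit the geometry of the trawl. Since $A_t=\{(s,y):s\le t,\ 0\le y\le a(t-s)\}$ and $a$ is non-increasing, a direct computation shows that for $t\ge0$ the two pieces of the symmetric difference have the same Lebesgue measure,
\[
\mathrm{Leb}(A_t\setminus A_0)=\mathrm{Leb}(A_0\setminus A_t)=\int_0^t a(u)\,\mathrm{d}u.
\]
Because $L$ is independently scattered, $X_{\Delta_n k}-X_0=L(A_{\Delta_n k}\setminus A_0)-L(A_0\setminus A_{\Delta_n k})$, so the triangle inequality gives $\mathbb{E}|X_{\Delta_n k}-X_0|\le\mathbb{E}|L(A_{\Delta_n k}\setminus A_0)|+\mathbb{E}|L(A_0\setminus A_{\Delta_n k})|$. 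As $a$ is continuous near $0$ (indeed $C^1$, though continuity is all I use here), it is bounded by some $M$ on a neighbourhood $[0,\delta]$; and since $n\Delta_n\to0$ we have $\Delta_n m_n\to0$, so for $n$ large every $\Delta_n k$ lies in $[0,\delta]$ and $\int_0^{\Delta_n k}a(u)\,\mathrm{d}u\le M\Delta_n k$.

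The crux is a moment estimate for $L$ on small sets. Because $L$ is homogeneous with seed $L'$, $L(B)$ is equal in law to $L'_\lambda$ with $\lambda=\mathrm{Leb}(B)$, where $(L'_s)_{s\ge0}$ is the L\'evy process with triplet $(\gamma,b,\nu)$. Splitting $L'_s$ into drift, Gaussian part, compensated small jumps ($|x|\le1$) and large jumps ($|x|>1$), and using that $\int_{|x|\le1}x^2\,\nu(\mathrm{d}x)<\infty$ holds automatically while $\int_{|x|>1}|x|\,\nu(\mathrm{d}x)<\infty$ follows from $\mathbb{E}|L'|<\infty$, I would bound $\mathbb{E}|bW_s|=b\sqrt{2s/\pi}$, the compensated-small-jump term by $(s\int_{|x|\le1}x^2\,\nu(\mathrm{d}x))^{1/2}$, and the large-jump-plus-drift term by $Cs$, obtaining $\mathbb{E}|L(B)|\le C_0\sqrt{\lambda}$ for $\lambda\le1$, with $C_0$ depending only on the triplet. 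I expect this estimate to be the main obstacle: under only a first-moment assumption the rate $\sqrt{\lambda}$, which is forced by the Gaussian component, is exactly what is needed, whereas a crude $o(1)$ bound would not close the summation.

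Finally I would assemble the pieces. Combining the three displays,
\[
\mathbb{E}|R_n|\le\frac{2C_0}{n}\sum_{k=0}^{m_n-1}\sqrt{M\Delta_n k}\le\frac{2C_0\sqrt{M}}{n}\sqrt{\Delta_n}\cdot\frac{2}{3}m_n^{3/2},
\]
and since $m_n\le nt+1$ gives $\Delta_n^{1/2}m_n^{3/2}/n\sim t^{3/2}\sqrt{n\Delta_n}$, the right-hand side is of order $t^{3/2}\sqrt{n\Delta_n}\to0$. Hence $R_n\to0$ in $L^1$, therefore in probability, and the proposition follows.
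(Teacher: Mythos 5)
Your proof is correct, but it follows a genuinely different route from the paper's. The paper proves this proposition with the machinery it sets up for all the partial-sum results: the partition $\mathcal{P}_{A}^{\Delta}(i,j)$ of the trawl set and the induced four-term decomposition $S_{m}^{\Delta}=S_{m}^{\Delta,1}+\cdots+S_{m}^{\Delta,4}$; it identifies $\tfrac{1}{n}S_{[nt]}^{\Delta_n,4}$ as the term carrying $\tfrac{[nt]}{n}\tilde X_0$ and kills everything else by cumulant estimates, computing $\mathcal{C}\bigl(z\ddagger(\tfrac{1}{n}S_{[nt]}^{\Delta_n,4}-\tfrac{[nt]}{n}X_0)\bigr)=\psi(\tfrac{[nt]}{n}z)\int_0^{[nt]\Delta_n}a(s)\,\mathrm{d}s\to0$ and showing the characteristic exponents of $\tfrac{1}{n}S_n^{\Delta_n,l}$, $l=1,2,3$, vanish; this is where the $C^1$ hypothesis on $a$ enters, to control $[a(s)-a(s+\Delta_n)]/\Delta_n$. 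You instead telescope directly against $\tilde X_0$ via stationarity, reduce to an $L^1$ bound on the increments $X_{\Delta_n k}-X_0$, and close the argument with two ingredients the paper does not use here: the exact geometry $\mathrm{Leb}(A_t\setminus A_0)=\mathrm{Leb}(A_0\setminus A_t)=\int_0^t a(u)\,\mathrm{d}u$, and the first-moment estimate $\mathbb{E}|L(B)|\leq C_0\sqrt{\mathrm{Leb}(B)}$ for $\mathrm{Leb}(B)\leq1$, obtained from the L\'evy--It\^o decomposition (the $\sqrt{\cdot}$ rate, forced by the Gaussian and compensated-small-jump parts, is indeed the crux, and your derivation of it is sound). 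Your approach buys three things: it is self-contained (no partition lemma, no characteristic-function calculus), it needs only continuity (in fact local boundedness) of $a$ near $0$ rather than continuous differentiability, and it yields the stronger conclusion of $L^1$ convergence with an explicit rate $O(\sqrt{n\Delta_n})$. What the paper's heavier route buys is reusability: the same decomposition and cumulant estimates are the backbone of the second-order result (Theorem \ref{thmndeltnzero}) and the $\mu=+\infty$ theorems, so proving Proposition \ref{porpndeltnzero} that way costs nothing extra in context.
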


Next, we proceed to derive second order asymptotics for $\mathbf{S}^{n}$
when $\mu=0$. Following the previously discussed heuristic, one should
expect that for large $n$
\[
\frac{1}{n}\mathbf{S}_{t}^{n}-t\tilde{X}_{0}\approx t(X_{\Delta_{n}n}-X_{0}).
\]
Therefore, in this case the asymptotics are determined by the behavior
of the increments of $X$. Before presenting our results in this framework,
we introduce our working assumption, which reads as 

\begin{assumption}\label{assumptionmu0}There is $0<\beta<2$ such
that (see Section \ref{PreliminariesLBID}) $\nu^{\pm}(x)\sim\tilde{K}_{\pm}x^{-\beta}$
as $x\rightarrow0^{+}$ with $\tilde{K}_{+}+\tilde{K}_{-}>0$. Furthermore,
if $\beta=1$ assume in addition that $\tilde{K}_{+}=\tilde{K}_{-}$
and $\mathrm{PV}\int_{-1}^{1}x\nu(\mathrm{d}x)$, the Cauchy principal
value, exists.

\end{assumption}

\begin{theorem}\label{thmndeltnzero}Let the assumptions of Proposition
\ref{porpndeltnzero} hold and put $Z_{t}^{n}:=\left(\frac{\left[nt\right]}{n}X_{0}-\frac{1}{n}\mathbf{S}_{t}^{n}\right)$
Then the following holds 
\begin{description}
\item [{i.}] If $b>0$
\[
\frac{1}{\sqrt{n\Delta_{n}}}Z_{t}^{n}\overset{\mathcal{F}\text{-}fd}{\longrightarrow}\sigma\int_{0}^{t}(t-r)\mathrm{d}(B_{r}^{(1)}+B_{r}^{(2)}),\,\,\,t\geq0,
\]
where $B^{(1)}$, $B^{(2)}$ are two independent Brownian motions
which are in turn independent of $L$, and $\sigma=b^{2}a(0)$.
\item [{iii.}] Suppose that $b=0$ and Assumption \ref{assumptionmu0}
holds. Then as $n\rightarrow\infty$
\[
\frac{1}{(n\Delta_{n})^{1/\beta}}Z_{t}^{n}\overset{\mathcal{F}\text{-}fd}{\longrightarrow}\sigma\int_{0}^{t}(t-s)\mathrm{d}(Y_{s}^{(1)}-Y_{t}^{(2)}),\,\,\,t\geq0,
\]
where and $Y^{(1)}$ and $Y^{(2)}$ two i.i.d. strictly $\beta$-stable
L\'{e}vy processes independent of $L$ satisfying that 
\[
\mathcal{C}(z\ddagger Y_{1}^{(q)})=\psi(z;\beta,\varrho\tilde{K}_{+},\varrho\tilde{K_{-}},\tilde{\gamma}),\,\,\,z\in\mathbb{R},
\]
with $\varrho=\beta a(0)$ and $\tilde{\gamma}=\gamma+\mathrm{PV}\int_{-1}^{1}x\nu(\mathrm{d}x)$
when $\beta=1$.
\end{description}
\end{theorem}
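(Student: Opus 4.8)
The plan is to realise the (centred) fluctuation as an integral of a \emph{deterministic} kernel against the L\'evy basis $L$, and then pass to the limit at the level of cumulants. Writing $X_0-X_{\Delta_n k}=L(A)-L(A_{\Delta_n k})=\int_{\mathbb R^2}(\mathbf 1_A-\mathbf 1_{A_{\Delta_n k}})\,\mathrm dL$ and summing, one gets $Z^n_t=\int_{\mathbb R^2}f^n_t\,\mathrm dL$ (up to the deterministic term $\tfrac{[nt]}{n}\mathbb E(X_0)$ removed by the centring implicit in the statement, i.e.\ reading $\tilde X_0$ for $X_0$), where $f^n_t:=\tfrac1n\sum_{k=0}^{[nt]-1}(\mathbf 1_A-\mathbf 1_{A_{\Delta_n k}})$. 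The geometry is the key input: since $a$ is non-increasing, for $h\ge0$ one has $A\setminus A_h=\{r\le0,\ a(h-r)<y\le a(-r)\}$ and $A_h\setminus A=\{0<r\le h,\ 0\le y\le a(h-r)\}$, two \emph{disjoint} regions sitting in $\{r\le0\}$ and $\{r>0\}$ respectively, each of Lebesgue measure $\int_0^h a$. Hence $f^n_t\ge0$ on $\{r\le0\}$, $f^n_t\le0$ on $\{r>0\}$, the two pieces have disjoint supports (so contribute \emph{independently}), and the whole support $B_n:=\mathrm{supp}(f^n_t)$ has Lebesgue measure $O(n\Delta_n)\to0$. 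This disjointness is precisely what produces the two independent limit processes, the sign of $f^n_t$ on $\{r>0\}$ accounting for the minus sign in $Y^{(1)}-Y^{(2)}$.

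For the finite-dimensional distributions I would fix $0\le t_1<\dots<t_d$ and compute the joint characteristic function directly: since $\sum_i u_i Z^n_{t_i}=\int g_n\,\mathrm dL$ with $g_n:=\sum_i u_i f^n_{t_i}$ deterministic, separability of $L$ gives $\mathbb E\exp(\mathrm i c_n^{-1}\sum_i u_i Z^n_{t_i})=\exp\big(\int_{\mathbb R^2}\psi(c_n^{-1}g_n(x))\,\mathrm dx\big)$, so the whole problem reduces to identifying $\lim_n\int\psi(c_n^{-1}g_n)\,\mathrm dx$. The drift part never contributes, because $\int f^n_t\,\mathrm dx=\tfrac1n\sum_k(|A|-|A_{\Delta_n k}|)=0$ by translation invariance. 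The kernel and the constants come from a counting identity: for $(r,y)$ with $r\le0$, $\sum_{k<[nt]}\mathbf 1_{A\setminus A_{\Delta_n k}}(r,y)$ equals the number of $k$ with $a(\Delta_n k-r)<y$, which after the substitution $w=(a^{-1}(y)+r)/(n\Delta_n)$ turns $f^n_t$ into the profile $(t-w)_+$ up to lower order; this is the origin of the kernel $t-r$, and, combined with $\int_0^{\Delta_n k}a\sim a(0)\Delta_n k$ (continuity of $a$), of the constants $\sigma^2=b^2a(0)$ and $\varrho=\beta a(0)$.

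In case (i), with $c_n=\sqrt{n\Delta_n}$ and $\mathbb E(|L'|^2)<\infty$, the Gaussian part dominates. Using the counting identity I would show $\tfrac1{n\Delta_n}\int f^n_{t}f^n_{t'}\,\mathrm dx\to 2a(0)\int_0^{t\wedge t'}(t-r)(t'-r)\,\mathrm dr$, the factor $2$ being the sum of the disjoint $\{r\le0\}$ and $\{r>0\}$ contributions; matching cumulants then identifies the limit as $\sigma\int_0^{\cdot}(t-r)\,\mathrm d(B^{(1)}_r+B^{(2)}_r)$ with $\sigma^2=b^2a(0)$. The jump part vanishes, but this is not completely immediate: finite second moment forces $\psi^{\mathrm{jump}}(\zeta)=o(\zeta^2)$ as $|\zeta|\to\infty$, so on the bulk where $c_n^{-1}|g_n|\to\infty$ one gets $o(c_n^{-2}\int g_n^2)=o(1)$, while the region $\{|g_n|<\varepsilon\}$ contributes $O(\varepsilon^2)$ via $\int g_n^2\mathbf 1_{|g_n|<\varepsilon}=O(\varepsilon^2 n\Delta_n)$, and one lets $n\to\infty$ then $\varepsilon\to0$. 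In case (iii), with $b=0$, $c_n=(n\Delta_n)^{1/\beta}$ and Assumption \ref{assumptionmu0}, the jump part is instead the leading term. The crucial input is that $\nu^{\pm}(x)\sim\tilde K_{\pm}x^{-\beta}$ as $x\to0^+$ forces $\psi(\zeta)\sim\psi(\zeta;\beta,\tilde K_+,\tilde K_-,\tilde\gamma)$ as $|\zeta|\to\infty$ (the $\beta=1$ symmetry and principal-value assumptions being exactly what make this hold, with $\tilde\gamma=\gamma+\mathrm{PV}\int_{-1}^1 x\,\nu(\mathrm dx)$). Since $c_n\to0$ forces $c_n^{-1}|g_n|\to\infty$ on $B_n$, the $\beta$-homogeneity of the strictly stable exponent gives $\int\psi(c_n^{-1}g_n;\beta,\tilde K_\pm,\tilde\gamma)\,\mathrm dx=\tfrac1{n\Delta_n}\int\psi(g_n;\beta,\tilde K_\pm,\tilde\gamma)\,\mathrm dx$, which evaluated via the counting identity and the pos/neg split yields the cumulant of $\sigma\int_0^\cdot(t-s)\,\mathrm d(Y^{(1)}_s-Y^{(2)}_s)$ with $\varrho=\beta a(0)$.

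The main obstacle is analytic and lies in case (iii): one must justify replacing $\psi$ by its stable tail \emph{uniformly} in $x$ over $B_n$, in particular controlling the boundary part of $B_n$ where $c_n^{-1}|g_n|$ is not large and handling the non-absolutely-convergent centring when $\beta\ge1$; a dominated-convergence argument based on the regular variation of $\nu$ at $0$ together with the uniform bound $|g_n|\lesssim t_d$ should close this. The remaining conceptual point is the asserted $\mathcal F$-stability, i.e.\ that the limits are independent of $\mathcal F=\mathcal F_L$. This holds because $c_n^{-1}Z^n$ is $\sigma(L|_{B_n})$-measurable with $|B_n|\to0$: for any bounded Borel $C$ write $L(C)=L(C\cap B_n)+L(C\setminus B_n)$, where $L(C\setminus B_n)$ is independent of $L|_{B_n}$ (disjoint supports) and $L(C\cap B_n)\to0$ in probability since $|C\cap B_n|\le|B_n|\to0$. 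Hence, along the generating family $\{L(C)\}$, $\mathcal F_L$ becomes asymptotically independent of $c_n^{-1}Z^n$, upgrading the convergence in law established above to the claimed $\overset{\mathcal F\text{-}fd}{\longrightarrow}$ with $B^{(q)}$, $Y^{(q)}$ independent of $L$.
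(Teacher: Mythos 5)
Your proposal is correct and is essentially the paper's own proof in different bookkeeping: your kernel representation $Z^n_t=\int f^n_t\,\mathrm{d}L$ with the support split into the disjoint regions $\{r\le 0\}$ and $\{r>0\}$ is exactly the paper's partition decomposition (\ref{eq:decompSn}) and its split into $U^n$ and $\hat{U}^n$ (with the overlapping-trawl term $S^{\Delta,1}$ shown negligible in Step 1), your counting identity reproduces (\ref{lebpartition}), and the large-argument stable asymptotics of $\psi$ that you invoke to handle case (iii) is precisely the zoom-in limit (\ref{chfctat0}), uniform on compacts, which the paper imports from \cite{IvanovJ18} and which also closes the "uniformity over $B_n$" obstacle you flag. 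Your $\mathcal{F}$-stability argument --- measurability of $c_n^{-1}Z^n$ with respect to $L$ restricted to a set $B_n$ with $Leb(B_n)\rightarrow 0$, then writing $L(C)=L(C\cap B_n)+L(C\setminus B_n)$ and applying Slutsky --- is verbatim the paper's Step 3.
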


\subsubsection{The case when $\mu=+\infty$}

Suppose now that $n\Delta_{n}\rightarrow\mu=+\infty$ as $n\uparrow\infty$
and $\Delta_{n}\downarrow0$. In order to get some intuition of what
one should expect in this situation, firstly let $\Delta_{n}=\Delta$,
i.e. the space between observations if fixed. Obviously, $\Delta_{n}n\rightarrow+\infty$
and the process $(X_{\Delta n})_{n\geq1}$ is strictly stationary.
In this situation $\mathbf{S}^{n}$ becomes the partial sums of a
discrete-time stationary process. In view of this, when properly scaled,
$\mathbf{S}^{n}$ typically converges to either a Brownian motion
or a fractional Brownian motion (fBm for short), depending whether
$(X_{\Delta n})_{n\geq1}$ has short memory or long memory, respectively.
It turned out that in our general setup, the former case the same
result holds, while in the latter $\mathbf{S}^{n}$ will converge
to a fBm only when $L$ has a Gaussian component. Before presenting
our results for this sampling scheme, we introduce our working assumptions.

\begin{assumption}[SM]\label{assumptionshortmem}

There is $p_{0}>2$ such that $\mathbb{E}(\left|L^{\prime}\right|^{p_{0}})<\infty$
and $a(s)=\mathrm{O}(s^{-p_{0}})$ as $s\uparrow+\infty$.

\end{assumption}

\begin{assumption}[LM]\label{assumptionlongmem}

Assume that $\mathbb{E}(\left|L^{\prime}\right|)<\infty$, and that
there is a strictly positive continuous function $a^{\prime}\in\mathrm{RV}_{\kappa}^{\infty}$,
with $2<\kappa<3$, such that 
\[
a(s)=\int_{s}^{\infty}a^{\prime}(y)dy,\,\,\,s\geq0.
\]

\end{assumption}

\begin{assumption}[LM']\label{assumptionlongmem-1}

Assumption \ref{assumptionlongmem} holds and for some $c_{a}>0$,
$a^{\prime}(y)\sim c_{a}y^{-\kappa}$ as $y\rightarrow\infty$.

\end{assumption}

Our first result concerns to the short memory case:

\begin{theorem}\label{theoremmean}

Suppose that $\mu=+\infty$, and that Assumption \ref{assumptionshortmem}
is fulfilled. Put 
\[
\mathcal{G}^{X}=\sigma(\cup_{k\geq1}\cap_{N\geq k}\sigma(X_{0},X_{\Delta_{N}},\ldots,X_{k\Delta_{N}})).
\]
Then, as $n\uparrow\infty$
\[
\sqrt{\frac{\Delta_{n}}{n}}\mathbf{S}^{n}\overset{\mathcal{\mathcal{G}}^{X}-\mathcal{D}[0,1]}{\Longrightarrow}\sigma_{a}B,
\]
where $\sigma_{a}^{2}=Var(L^{\prime})\int_{\mathbb{R}}a(s)\mathrm{d}s$
and $B$ is a Brownian motion independent of $\mathcal{\mathcal{G}}^{X}$.

\end{theorem}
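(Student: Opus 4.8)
The plan is to realise the centred partial sum as a single stochastic integral against the compensated basis and then run an infinitely-divisible central limit theorem. Writing $\tilde L:=L-\mathbb E(L)$ and
\[
g_{n,t}:=\sqrt{\tfrac{\Delta_n}{n}}\sum_{k=0}^{\lfloor nt\rfloor-1}\mathbf 1_{A_{\Delta_n k}},
\]
$\sigma$-additivity of the basis gives $\sqrt{\Delta_n/n}\,\mathbf S^n_t=\int_{\mathbb R^2}g_{n,t}\,\mathrm d\tilde L$, a compensated integral of a deterministic integrand. Its cumulant is $\int_{\mathbb R^2}\psi_0(z\,g_{n,t}(x))\,\mathrm dx$, where $\psi_0(z)=-\tfrac12 b^2z^2+\int_{\mathbb R}(e^{izw}-1-izw)\,\nu(\mathrm dw)$ is the fully centred exponent, so the whole problem reduces to the asymptotics of this one scalar integral.

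I would first establish finite-dimensional convergence. Since $p_0>2$ forces $\mathrm{Var}(L')=b^2+\int w^2\nu(\mathrm dw)<\infty$, a Taylor expansion of $\psi_0$ reduces the one-dimensional statement to (i) variance convergence $\int g_{n,t}^2\,\mathrm dx=\frac{\Delta_n}{n}\sum_{j,k}\varGamma_X(\Delta_n|j-k|)/\mathrm{Var}(L')\to \sigma_a^2 t/\mathrm{Var}(L')$, and (ii) the Lindeberg/uniform-negligibility condition
\[
\int_{\mathbb R^2}g_{n,t}^2\Big(\int_{|w|>\varepsilon/|g_{n,t}|}w^2\,\nu(\mathrm dw)\Big)\mathrm dx\to0,
\]
which forces the jump part of $\psi_0$ to linearise into the Gaussian exponent. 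Part (i) is a Riemann-sum argument: as $\Delta_n\downarrow0$ and $\lfloor nt\rfloor\Delta_n\to\infty$ one has $\Delta_n\sum_\ell\varGamma_X(\Delta_n|\ell|)\to\int_{\mathbb R}\varGamma_X(h)\,\mathrm dh$ by Karamata's theorem, the limit being finite precisely because Assumption SM yields $\int_0^\infty s\,a(s)\,\mathrm ds<\infty$; this identifies $\sigma_a^2$. For the joint statement I replace $g_{n,t}$ by $\sum_j\lambda_j g_{n,t_j}$ (again a single integrand) and check that the limiting covariance is $\sigma_a^2(s\wedge t)$, identifying the Gaussian limit as $\sigma_a B$.

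The stable-convergence upgrade is where the precise form of $\mathcal G^X$ is used. By the standard criterion it suffices to show that for every bounded $\mathcal G^X$-measurable $W$ one has $\mathbb E[W e^{iz\sqrt{\Delta_n/n}\,\mathbf S^n_t}]\to\mathbb E[W]\,e^{-z^2\sigma_a^2 t/2}$, i.e. that the limit is independent of $\mathcal G^X$. I would argue by localisation: each generator $X_{i\Delta_N}=L(A_{i\Delta_N})$ is measurable with respect to $L$ restricted to $\bigcup_{i\le k}A_{i\Delta_N}$, a set that shrinks to $A_0$ as $N\uparrow\infty$, so that the germ $\bigcap_{N\ge k}$ places $\mathcal G^X$ inside $\sigma(L|_{A_0})$. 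One then shows the contribution of $L|_{A_0}$ to the scaled sum, namely $\sqrt{\Delta_n/n}\sum_k\tilde L(A_{\Delta_n k}\cap A_0)$, vanishes in $L^2$: its variance is $\frac{\Delta_n}{n}\mathrm{Var}(L')\sum_{j,k}\mathrm{Leb}(A_{\Delta_n j}\cap A_{\Delta_n k}\cap A_0)$, and the finite measure of $A_0$ together with the decay of $a$ makes this double sum of smaller order than $n/\Delta_n$ (the diagonal is $\mathrm O(\Delta_n^{-1})$ and the off-diagonal is negligible once $n\Delta_n\to\infty$ is used). Hence the $L|_{A_0}$-measurable part drops out in the limit and the Gaussian limit is independent of $\mathcal G^X$.

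Finally I would prove tightness in $\mathcal D[0,1]$ through a moment bound on the scaled increments $h_n:=g_{n,t}-g_{n,s}$. A Rosenthal-type inequality for centred infinitely-divisible integrals gives, with $p=p_0$,
\[
\mathbb E\Big|\int h_n\,\mathrm d\tilde L\Big|^{p}\le C_p\Big[\big(\textstyle\int h_n^2\,\mathrm dx\big)^{p/2}\mathrm{Var}(L')^{p/2}+\int|h_n|^{p}\,\mathrm dx\;\mathbb E|L'|^{p}\Big],
\]
where the first term is $\le C((t-s)\vee n^{-1})^{p/2}$ with $p/2>1$ (the source of tightness) and the second is controlled by bounding the $L^{p}$-mass of the trawl overlap multiplicity via $a(s)=\mathrm O(s^{-p_0})$; the representative case $p=4$ is the fourth-moment computation deferred to the Appendix. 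This yields the Billingsley moment criterion, and together with the stable finite-dimensional convergence it gives the asserted $\mathcal G^X$-stable functional convergence. I expect the main obstacle, recurring in both step (ii) and the tightness bound, to be the control of the heavy-overlap region near $y=0$, where unboundedly many trawl sets pile up so that $\|g_{n,t}\|_\infty$ does not vanish; bounding the $L^{p_0}$-mass of this multiplicity is exactly what forces the two-sided strength of Assumption SM ($p_0>2$ and $a(s)=\mathrm O(s^{-p_0})$ together), while making rigorous the inclusion $\mathcal G^X\subseteq\sigma(L|_{A_0})$ through the germ $\bigcap_{N\ge k}$ is the secondary delicate point and the reason the result is phrased with stable rather than plain distributional convergence.
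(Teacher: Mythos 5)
Your plan takes a genuinely different route from the paper: you work with the cumulant of a single integral $\int g_{n,t}\,\mathrm{d}\tilde L$ and a Lindeberg condition, whereas the paper decomposes $S^{\Delta_n}_m$ into the independent pieces $L(\mathcal{P}_A^{\Delta_n}(i,j))$, shows the term $S^{\Delta_n,1}_m$ dominates, and runs a Lyapunov CLT for a row-wise independent triangular array; for stability the paper never touches $\mathcal{G}^X$ directly but applies a martingale stable CLT (Theorem 6.1 of H\"ausler--Luschgy) to the nested filtration $\mathcal{F}_i^n$ generated by those pieces and then uses $\mathcal{G}^X\subseteq\mathcal{G}$. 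Your f.d.d.\ and tightness steps are viable in this alternative form, but they are stated rather than proved: the entire analytic content is the coverage-multiplicity estimate (the Lebesgue measure of the region covered by at least $m$ of the sets $A_{\Delta_n k}$, $k<n$, is at most $C\bigl[\varGamma_X(m\Delta_n)+n\Delta_n\,a(m\Delta_n)\bigr]$), which is what makes both your Lindeberg integral and the term $\int|h_n|^{p}\,\mathrm{d}x$ manageable (after reducing w.l.o.g.\ to $2<p_0<3$). Also, two smaller inaccuracies: the variance limit is a monotone Riemann-sum argument, not Karamata; and your increment bound of order $((t-s)\vee n^{-1})^{p/2}$ does not verify Billingsley's criterion uniformly over small increments --- one needs the Taqqu-type reduction to sequences $m_n$ with $m_n\Delta_n\to\infty$, exactly as the paper does in its Step 2.

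The genuine gap is the stable-convergence step. You infer $\mathcal{G}^X\subseteq\sigma(L|_{A_0})$ from the fact that the index sets $\bigcup_{i\leq k}A_{i\Delta_N}$ shrink to $A_0$ as $N\uparrow\infty$. This inference is invalid as stated: intersections of $\sigma$-fields do not commute with shrinking index sets, and $\bigcap_N\bigl(\mathcal{A}\vee\mathcal{B}_N\bigr)$ can be strictly larger than $\mathcal{A}\vee\bigcap_N\mathcal{B}_N$ even when $\bigcap_N\mathcal{B}_N$ is trivial (the standard counterexample with i.i.d.\ signs $\xi_i$ and the fields generated by consecutive products $\xi_i\xi_{i+1}$ shows how information can hide in every $\mathcal{A}\vee\mathcal{B}_N$ without lying in $\mathcal{A}$). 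This is precisely the delicacy the paper sidesteps --- note that the paper itself only \emph{conjectures}, in a remark, an identification of $\mathcal{G}^X$. The inclusion you want is in fact true here, but the proof must use the independent-scattering structure, not set convergence: pass to a subsequence with $\sum_j\mathrm{Leb}(V_{N_j})<\infty$, set $V^{(j)}=\bigcup_{l\geq j}V_{N_l}$ (now nested, with $\mathrm{Leb}(V^{(j)})\downarrow0$), and observe that for bounded $W$ measurable with respect to every $\sigma(L|_{A_0})\vee\sigma(L|_{V^{(j)}})$, independence of $L|_{A_0}$, $L|_{V^{(j)}}$ and $L|_{V^{(1)}\setminus V^{(j)}}$ gives $\mathbb{E}\bigl[W\,\big|\,\sigma(L|_{A_0})\vee\sigma(L|_{V^{(1)}\setminus V^{(j)}})\bigr]=\mathbb{E}\bigl[W\,\big|\,\sigma(L|_{A_0})\bigr]$ for every $j$, while the left-hand side converges to $W$ by (increasing) martingale convergence; hence $W=\mathbb{E}[W\,|\,\sigma(L|_{A_0})]$ a.s. Alternatively you can avoid germ fields altogether by a diagonal argument: a bounded $W\in\bigcap_{N\geq k}\sigma(X_0,\ldots,X_{k\Delta_N})$ is, for the running index $N=n$, measurable with respect to $\sigma\bigl(L(B):B\subseteq\bigcup_{i\leq k}A_{i\Delta_n}\bigr)$, so splitting $\xi_n$ into the parts of $L$ on and off $\bigcup_{i\leq k}A_{i\Delta_n}$ (the on-part vanishing in $L^2$ by your own variance computation) yields the mixing convergence directly. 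Without one of these repairs, your stability step does not stand.
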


\begin{remark}By the independent scattered property of $L$, the
limiting process appearing in Theorem \ref{theoremmean} is not only
independent of $\mathcal{\mathcal{G}}^{X}$, but also of 
\[
\sigma(L(B):B\cap\cup_{t\geq0}A_{t}=\emptyset).
\]
Furthermore, in view that the array of $\sigma$-fields
\[
\mathcal{F}_{j,n}^{X}:=\sigma(X_{0},X_{\Delta_{n}},\ldots,X_{j\Delta_{n}}),\,\,\,\,j=0,1,\ldots,n-1,
\]
is ``almost nested'', we conjecture that 
\[
\mathcal{\mathcal{G}}^{X}=\sigma(X_{t},t\geq0).
\]

\end{remark}

The asymptotic behavior drastically changes when $X$ has long memory.
In order to have better exposition of our results, we split our finding
in two theorems that distinguish the case in whether the Gaussian
component in $L$ is present or not. 

\begin{theorem}\label{theoremmean-2}

Let Assumption \ref{assumptionlongmem} hold. Suppose that $b>0$,
$\mathbb{E}(\left|L^{\prime}\right|^{2})<\infty$, and $\mu=+\infty$.
Then as $n\uparrow\infty$ 
\[
\frac{1}{n\sqrt{a(n\Delta_{n})n\Delta_{n}}}\mathbf{S}^{n}\overset{\mathcal{\mathcal{G}}^{X}-\mathcal{D}[0,1]}{\Longrightarrow}\sigma_{\kappa}B^{H},\,\,\,n\rightarrow\infty.
\]
where $\sigma_{\kappa}^{2}=\frac{b^{2}}{(\kappa-2)(3-\kappa)(4-\kappa)}$,
$B^{H}$ is a fBm of index $H=2-\frac{\kappa}{2}>1/2$ and $\mathcal{\mathcal{G}}^{X}$
as in Theorem \ref{theoremmean}.

\end{theorem}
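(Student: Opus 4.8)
The plan is to split the Lévy basis into its Gaussian and purely non-Gaussian components and to argue that, under the long-memory normalisation, the Gaussian component alone generates the fractional Brownian limit, while the jump component---although it does \emph{not} vanish in $L^{2}$---is negligible in probability. Writing $\tilde{L}:=L-\mathbb{E}(L)$ and introducing the overlap-count function
\[
g_{n}^{t}(r,y):=\sum_{k=0}^{\left[nt\right]-1}\mathbf{1}_{A_{\Delta_{n}k}}(r,y),\qquad t\geq0,
\]
we have $\mathbf{S}_{t}^{n}=\int_{\mathbb{R}^{2}}g_{n}^{t}\,\mathrm{d}\tilde{L}$. Decomposing $L=L^{G}+L^{J}$ into independent homogeneous Gaussian (variance parameter $b^{2}$) and centred pure-jump (L\'evy measure $\nu$) bases yields $\mathbf{S}^{n}=\mathbf{S}^{n,G}+\mathbf{S}^{n,J}$, and by the independent scattering (isometry) property $\mathrm{Var}(\mathbf{S}_{t}^{n})=\mathrm{Var}(L')\int_{\mathbb{R}^{2}}(g_{n}^{t})^{2}=\mathrm{Var}(L')\sum_{k,l=0}^{[nt]-1}\int_{\Delta_{n}|k-l|}^{\infty}a$, in view of (\ref{TrawlACF}). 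Throughout write $R_{n}:=n\sqrt{a(n\Delta_{n})n\Delta_{n}}$.

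For the Gaussian part I would first compute, for $0\leq s\leq t$, the covariance $\mathrm{Cov}(\mathbf{S}_{s}^{n,G},\mathbf{S}_{t}^{n,G})=b^{2}\int g_{n}^{s}g_{n}^{t}$, rewrite it as a double sum of the overlaps $\int_{\Delta_{n}|k-l|}^{\infty}a$, and pass to the limit by applying Karamata's Theorem twice: once to $a^{\prime}\in\mathrm{RV}_{\kappa}^{\infty}$ to obtain $a\in\mathrm{RV}_{\kappa-1}^{\infty}$ and $\int_{\cdot}^{\infty}a\in\mathrm{RV}_{\kappa-2}^{\infty}$, and once more to turn the Riemann sums into $\int_{0}^{s}\int_{0}^{t}|u-v|^{-(\kappa-2)}\,\mathrm{d}u\,\mathrm{d}v$. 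This should give $R_{n}^{-2}\mathrm{Cov}(\mathbf{S}_{s}^{n,G},\mathbf{S}_{t}^{n,G})\to\sigma_{\kappa}^{2}\tfrac{1}{2}(s^{2H}+t^{2H}-|t-s|^{2H})$ with $H=2-\kappa/2$, i.e.\ convergence of the finite-dimensional distributions of $R_{n}^{-1}\mathbf{S}^{n,G}$ to those of $\sigma_{\kappa}B^{H}$. Since $\mathbf{S}^{n,G}$ is Gaussian for each $n$, tightness follows from the Kolmogorov criterion: the increment variances are $\mathrm{O}(R_{n}^{2}|t-s|^{2H})$, hence $\mathbb{E}|\mathbf{S}_{t}^{n,G}-\mathbf{S}_{s}^{n,G}|^{2p}\leq C_{p}R_{n}^{2p}|t-s|^{2Hp}$, and $2Hp>1$ already for $p=1$ because $H>1/2$. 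This yields $R_{n}^{-1}\mathbf{S}^{n,G}\Rightarrow\sigma_{\kappa}B^{H}$ in $\mathcal{D}[0,1]$.

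The decisive step is to show $R_{n}^{-1}\sup_{0\leq t\leq1}|\mathbf{S}_{t}^{n,J}|\overset{\mathbb{P}}{\to}0$. This is delicate precisely because $\mathrm{Var}(R_{n}^{-1}\mathbf{S}_{t}^{n,J})=(\int x^{2}\nu(\mathrm{d}x))\cdot R_{n}^{-2}\int(g_{n}^{t})^{2}$ does \emph{not} tend to zero: the jump part carries order-one variance, which must therefore escape into rare, large fluctuations rather than accumulate into a Gaussian. I would control it through the characteristic exponent, writing $\log\mathbb{E}\exp(\mathbf{i}zR_{n}^{-1}\mathbf{S}_{t}^{n,J})=\int_{\mathbb{R}^{2}}\psi_{J}(zg_{n}^{t}/R_{n})$, where $\psi_{J}(u)=\int(e^{\mathbf{i}ux}-1-\mathbf{i}ux)\nu(\mathrm{d}x)$ obeys $\psi_{J}(u)\sim-\tfrac{1}{2}(\int x^{2}\nu(\mathrm{d}x))u^{2}$ as $u\to0$ but $\psi_{J}(u)=\mathrm{o}(u^{2})$ as $u\to\infty$ (indeed $\psi_{J}(u)=\mathrm{O}(|u|^{\beta_{\nu}})$ when $\nu$ is regularly varying at $0$), the latter being exactly where finite variance, i.e.\ $\beta_{\nu}<2$, enters. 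The key observation is that $\sup g_{n}^{t}$ is of order $n$, which dominates $R_{n}$ since $n/R_{n}=(n\Delta_{n})^{(\kappa-2)/2}\to\infty$, and that the $L^{2}$-mass $\int(g_{n}^{t})^{2}$ concentrates on the level sets where $g_{n}^{t}\gg R_{n}$; there the integrand is strictly sub-quadratic, so the putative Gaussian contribution $-\tfrac{1}{2}(\int x^{2}\nu(\mathrm{d}x))R_{n}^{-2}\int(g_{n}^{t})^{2}$ is suppressed and the whole exponent tends to $0$. Quantifying this via a change of variables in the level-set (co-area) decomposition of $g_{n}^{t}$ together with Karamata's Theorem should give $\int\psi_{J}(zg_{n}^{t}/R_{n})\to0$, whence $R_{n}^{-1}\mathbf{S}_{t}^{n,J}\overset{d}{\to}0$. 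To promote this to the uniform statement I would truncate the jumps at a level $\delta$: the small-jump part is handled by the same estimate combined with a maximal inequality, while the large-jump (compound Poisson) part is shown to produce only isolated spikes that are uniformly negligible as $\delta\downarrow0$ after $n\to\infty$.

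Combining the two parts by Slutsky's lemma gives $R_{n}^{-1}\mathbf{S}^{n}\Rightarrow\sigma_{\kappa}B^{H}$ in $\mathcal{D}[0,1]$. To upgrade this to $\mathcal{G}^{X}$-stable convergence I would proceed exactly as in the proof of Theorem \ref{theoremmean}: since the limiting fractional Brownian motion is built from Gaussian mass that, by the independent scattering of $L$, becomes asymptotically independent of the coarse-grid tail field $\mathcal{G}^{X}$, it suffices to establish joint convergence of $(R_{n}^{-1}\mathbf{S}^{n},\zeta)$ for bounded $\mathcal{G}^{X}$-measurable $\zeta$ with $B^{H}$ independent of $\zeta$, which is what characterises stable convergence. \textbf{The main obstacle is the jump step:} one must show that a sequence whose variance stays bounded away from zero nonetheless converges to zero in probability, uniformly in $t$, which forces the argument to the level of characteristic exponents and relies on a careful interplay between the concentration of the overlap function near its maximum and the sub-quadratic growth of $\psi_{J}$ guaranteed by $\beta_{\nu}<2$. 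The fourth-moment formula derived in the Appendix should be useful for the accompanying tightness estimates.
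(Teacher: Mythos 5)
Your proposal shares the paper's overall architecture: L\'evy--It\^o decomposition of $L$ into Gaussian and jump parts, fBm covariance for the Gaussian part via Karamata's theorem, negligibility in probability of the jump part \emph{despite} its order-one variance, and stability inherited from the machinery of Theorem \ref{theoremmean}. You also correctly identify the crux (sub-quadratic growth of $\psi_{J}$ at infinity, $\sup g_{n}^{t}\sim n\gg R_{n}$), which is exactly the mechanism the paper exploits.

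The genuine gap is your ``decisive step'', the uniform statement $R_{n}^{-1}\sup_{0\leq t\leq1}|\mathbf{S}_{t}^{n,J}|\overset{\mathbb{P}}{\to}0$, and the tools you propose for it would not deliver it as described. A variance-based maximal inequality cannot work because the variance of the jump part does not vanish; truncating at level $\delta$ only shrinks the small-jump variance to order $\int_{|x|\leq\delta}x^{2}\nu(\mathrm{d}x)$, while the remaining compound-Poisson part still carries non-vanishing variance, and its uniform control is exactly as hard as the original problem: a single large jump of size $x$ sitting in a high-overlap region (where $g_{n}^{t}\approx n$) contributes about $x/\sqrt{a(T_{n})T_{n}}\to\infty$ after scaling, so ``isolated spikes'' are \emph{not} automatically negligible --- one must show that the level sets $\{g_{n}^{t}\gtrsim n\sqrt{a(T_{n})T_{n}}\}$ have Lebesgue measure decaying fast enough (a further Karamata computation, which you do not supply) that no jump lands there; moreover the partial sums have no martingale structure in $t$, so no off-the-shelf maximal inequality applies. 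The paper sidesteps the uniform question entirely: it first proves tightness of the \emph{full} scaled process from second moments alone, combining Lemma \ref{lemmavarianceSm} with the Taqqu/Billingsley criterion --- this works with exponent $2H=4-\kappa>1$, needs only $\mathbb{E}(|L^{\prime}|^{2})<\infty$, and is insensitive to the Gaussian/jump split --- and then only \emph{pointwise} (f.d.d.) negligibility of the jump part is required, which is exactly what the characteristic-function estimates in the proof of Theorem \ref{theoremmean3} give upon replacing $\beta_{\nu}$ by $2$. If you reorganize your argument this way (tightness first, by second moments, for the whole process), your fixed-$t$ characteristic-exponent computation suffices and the problematic uniform step disappears. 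A further, more minor, point: stable convergence needs more than independence of $B^{H}$ from $\mathcal{G}^{X}$; the paper obtains it by writing the Gaussian-reduced statistic as a row-wise independent array $\tilde{\xi}_{i,n}$ adapted to the filtration generated by the partition sets $\mathcal{P}_{A}^{\Delta_{n}}(i,j)$ and invoking Theorem 6.1 of H\"ausler--Luschgy, so your deferral to Theorem \ref{theoremmean} is legitimate only if you carry that array structure over to the present normalization.
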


When the Gaussian component is not present, the limit is not anymore
Gaussian and the rate of convergence for $\mathbf{S}^{n}$ varies
according to the behavior of $\beta_{\nu}$ the Blumenthal-Getoor
index of $L$. More precisely:

\begin{theorem}\label{theoremmean3}

Let Assumption \ref{assumptionlongmem-1} hold. Suppose that $b=0$,
$\mathbb{E}\left(\left|L^{\prime}\right|^{2}\right)<\infty$, and
that $\mu=+\infty$. The following holds:
\begin{description}
\item [{i.}] If $\beta_{\nu}<\kappa-1$, then, as $n\uparrow\infty$
\[
\frac{\Delta_{n}}{\left(c_{a}n\Delta_{n}\right)^{\frac{1}{\kappa-1}}}\mathbf{S}^{n}\overset{fd}{\rightarrow}Y,
\]
where $Y$ is a strictly $(\kappa-1)$-stable L\'{e}vy process satisfying
that (see (\ref{chfnctstrictlystabe}) 
\[
\mathcal{C}(z\ddagger Y_{1})=\psi(z;\kappa-1,K_{+,\kappa},K_{+,\kappa},\tilde{\gamma}),
\]
with $K_{+,\kappa}=\int_{0}^{\infty}x^{\kappa-1}\nu(\mathrm{d}x)$
and $K_{-,\kappa}=\int_{-\infty}^{0}\left|x\right|^{\kappa-1}\nu(\mathrm{d}x)$.
\item [{ii.}] When $2>\beta_{\nu}>\kappa-1>1$, further assume that for
some $\tilde{K}_{+}+\tilde{K}_{-}>0$, $\nu^{\pm}(x)\sim\tilde{K}_{\pm}x^{-\beta_{\nu}}$
as $x\rightarrow0^{+}$. Then, as $n\rightarrow\infty$
\[
\frac{1}{n\left(a(n\Delta_{n})n\Delta_{n}\right)^{1/\beta_{\nu}}}\mathbf{S}_{1}^{n}\overset{d}{\rightarrow}\xi,
\]
where $\xi$ is a strictly $\beta_{\nu}$-stable such that 
\[
\mathcal{C}(z\ddagger Y_{1})=\psi(z;\beta_{\nu},K_{+,\kappa,\beta_{\nu}},K_{-,\kappa,\beta_{\nu}},\tilde{\gamma}),
\]
in which $K_{\pm,\kappa,\beta_{\nu}}=\varrho_{a}\beta_{\nu}\tilde{K}_{\pm}$
and 
\[
\varrho_{a}=\frac{1}{\kappa-2}+(\kappa-1)\int_{0}^{1}(1-s)s^{\beta_{\nu}-\kappa}\mathrm{d}s+2\int_{0}^{1}s^{\beta_{\nu}-\kappa+1}\mathrm{d}s.
\]
\end{description}
\end{theorem}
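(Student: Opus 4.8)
The plan is to reduce the partial sum to a single integral against the homogeneous L\'evy basis $L$ and then analyse the resulting cumulant integral with Karamata's theorem and the regular variation hypotheses. \textbf{Step 1 (representation).} By additivity of $L$ and $X_{\Delta_n k}=L(A_{\Delta_n k})$, for $m=[nt]$ we have $\mathbf S^n_t=\int_{\mathbb R^2}h_{n,t}\,dL$ after centering, where $h_{n,t}(r,y):=\sum_{k=0}^{[nt]-1}\mathbf 1_{A_{\Delta_n k}}(r,y)$. Since $(r,y)\in A_{\Delta_n k}$ iff $r\le\Delta_n k\le r+a^{\leftarrow}(y)$ (with $a^{\leftarrow}$ the generalized inverse of the non-increasing $a$), the kernel equals $h_{n,t}(r,y)=\#\{0\le k<[nt]:\ r\le\Delta_n k\le r+a^{\leftarrow}(y)\}$, i.e.\ the number of grid points in an interval of length $a^{\leftarrow}(y)$; in particular this index set is always a block of consecutive integers. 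By the L\'evy--Khintchine formula for homogeneous bases (control measure $=$ Lebesgue), the joint cumulant of $(\mathbf S^n_{t_1},\dots,\mathbf S^n_{t_d})$ at $(z_1,\dots,z_d)$ is $\int_{\mathbb R^2}\psi_0\!\big(\sum_j z_j h_{n,t_j}(r,y)\big)\,dr\,dy$, where $\psi_0$ is the characteristic exponent of the centered seed $L'-\mathbb E L'$.

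\textbf{Step 2 (induced L\'evy measure).} Because $b=0$, only the jump part survives, so I would prove convergence in law via convergence of the induced L\'evy measure of the rescaled integral, whose positive tail is $\tilde\nu^+_{n,t}(w)=\int_{\{h_{n,t}>0\}}\nu^+\!\big(w/(\rho_n h_{n,t})\big)\,dr\,dy$ (and symmetrically for $\nu^-$), the truncated second moment and drift being controlled by the centering and $\mathbb E|L'|^2<\infty$. Parametrizing by the level $v=h_{n,t}$: for fixed $y$ the profile $h_{n,t}(\cdot,y)$ is, to leading order, flat and equal to $a^{\leftarrow}(y)/\Delta_n$ over an $r$-range of length $\asymp[nt]\Delta_n$, so $v=a^{\leftarrow}(y)/\Delta_n$, $y=a(v\Delta_n)$, and the area element at level $v$ is $\asymp[nt]\Delta_n\cdot a'(v\Delta_n)\Delta_n\,dv$. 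Substituting and using the regular variation of $a'$ from Assumption~\ref{assumptionlongmem-1} is what produces the index of the limit.

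\textbf{Step 3 (case i).} The scaling $\rho_n=\Delta_n/(c_an\Delta_n)^{1/(\kappa-1)}$ forces the active levels to be of order $1/\rho_n\to\infty$. With $s=v\Delta_n$ the positive tail becomes $[nt]\Delta_n\int_0^\infty\nu^+\!\big(c_n/s\big)a'(s)\,ds$, $c_n=w(c_an\Delta_n)^{1/(\kappa-1)}$; substituting $s=c_n\sigma$ and inserting $a'(s)\sim c_as^{-\kappa}$ reduces the constant to $\int_0^\infty\nu^+(\tau)\tau^{\kappa-2}\,d\tau=K_{+,\kappa}/(\kappa-1)$ (finiteness of this integral is exactly $\beta_\nu<\kappa-1<2$), so $\tilde\nu^+_{n,t}(w)\to t\,K_{+,\kappa}(\kappa-1)^{-1}w^{-(\kappa-1)}$, the tail of the claimed $(\kappa-1)$-stable L\'evy measure, and linearity in $t$ gives stationary increments. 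For genuine f.d.d.\ convergence I would also show asymptotic independence of increments: a point feeds two increments only when its consecutive-index block straddles a threshold $[nt_j]$, and the $r$-measure of such points at level $v$ is $\asymp v\Delta_n$ against the bulk $\asymp n\Delta_n$; since the dominant levels satisfy $v/n\asymp(n\Delta_n)^{(2-\kappa)/(\kappa-1)}\to0$, the off-diagonal mass vanishes and the limit measure concentrates on the coordinate axes, yielding the L\'evy process $Y$.

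\textbf{Step 4 (case ii, the hard part).} Now $\beta_\nu>\kappa-1$, so the near-origin behaviour $\nu^\pm(x)\sim\tilde K_\pm x^{-\beta_\nu}$ dominates and $\tilde\nu^+_{n}(w)\approx\tilde K_+w^{-\beta_\nu}\rho_n^{\beta_\nu}\int_{\mathbb R^2}h_{n,1}^{\beta_\nu}\,dr\,dy$ with $\rho_n^{\beta_\nu}=(n^{\beta_\nu}a(n\Delta_n)n\Delta_n)^{-1}$. The crux is the \emph{exact} evaluation of $\int h_{n,1}^{\beta_\nu}\,dr\,dy$: because $\beta_\nu>1$, the ramp regions where the index block only partially meets $\{0,\dots,n-1\}$ are no longer negligible, so one must integrate the full trapezoidal profile of $h_{n,1}(\cdot,y)$ and split the block-length integral at the scale $a^{\leftarrow}(y)\asymp n\Delta_n$ where the profile changes shape. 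I expect this edge bookkeeping to be the main obstacle: collecting the long-block term, the bulk term and the two ramp terms (each convergent precisely because $\kappa-1<\beta_\nu<2$) is what produces the three summands $\tfrac1{\kappa-2}+(\kappa-1)\int_0^1(1-s)s^{\beta_\nu-\kappa}\,ds+2\int_0^1 s^{\beta_\nu-\kappa+1}\,ds$ of $\varrho_a$. Controlling the small-jump compensator and the off-scale remainder by dominated convergence and invoking L\'evy's continuity theorem then gives $\mathbf S^n_1\xrightarrow{d}\xi$ with the stated $\beta_\nu$-stable triplet.
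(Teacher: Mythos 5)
Your Steps 1--2 are, in substance, the same device the paper uses: your occupation count $h_{n,t}$ is constant on the sets $\mathcal{P}_{A}^{\Delta_n}(i,j)$ of the paper's partition (where it equals $j-i+1$), so your cumulant formula $\int\psi_0\big(\sum_j z_j h_{n,t_j}\big)\,\mathrm{d}r\,\mathrm{d}y$ is exactly the paper's decomposition (\ref{eq:decompSn}) combined with the exact expressions (\ref{chfncS1})--(\ref{chfncS3}); the only real difference is that you track the induced L\'evy measure while the paper tracks characteristic exponents. Your case i computation is consistent with the paper's: your constant $\int_0^\infty\nu^+(\tau)\tau^{\kappa-2}\,\mathrm{d}\tau=K_{+,\kappa}/(\kappa-1)$ is the L\'evy-measure counterpart of the paper's $\int_0^\infty\psi(x)x^{-\kappa}\mathrm{d}x$ evaluated via Lemma 14.11 of \cite{Sato99}, and your straddling-block argument for independence of increments corresponds to the paper's exact observation (\ref{IncremS11})--(\ref{IncremS12}).

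There are, however, genuine gaps. First and decisively, in case ii you defer the central computation: you write that the ``edge bookkeeping ... is what produces the three summands'' of $\varrho_a$ and that you ``expect this ... to be the main obstacle.'' That bookkeeping \emph{is} the theorem. The paper's proof establishes three separate limits --- the long-block term $S_n^{\Delta_n,4}$ gives $\frac{1}{\kappa-2}\psi_{\beta_\nu}(z)$ (see (\ref{limitchfs4})), the bulk term $S_n^{\Delta_n,1}$ gives $(\kappa-1)\int_0^1(1-s)s^{\beta_\nu-\kappa}\mathrm{d}s\,\psi_{\beta_\nu}(z)$ (see (\ref{limitchfs1})), and the two ramp terms $S_n^{\Delta_n,2},S_n^{\Delta_n,3}$ each give $\int_0^1 s^{\beta_\nu-\kappa+1}\mathrm{d}s\,\psi_{\beta_\nu}(z)$ --- and each of these requires the zooming-in limit (\ref{chfctat0}), Potter's bounds (\ref{potterboundsfortrawlfunct}), the uniform estimate (\ref{boundchfncgethoorbig}), and dominated convergence; naming the three terms is not proving their limits. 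Second, convergence of infinitely divisible laws requires more than convergence of the L\'evy measures away from the origin: you must also show that the truncated-variance (small-jump) part and the drift of the rescaled sums converge to those of a \emph{strictly} stable law with no surviving Gaussian component. You assert this is ``controlled by the centering and $\mathbb{E}|L'|^2<\infty$'' but never carry it out; in the paper this is precisely what the bounds of Lemma \ref{lemmaaproxZ}, estimate (\ref{bounchfnctsmallgethoor}) and the error terms $\mathcal{E}_l^n$ in Lemma \ref{lemmaapproxchf} accomplish. Third, your co-area parametrization is stated only up to ``$\asymp$'' constants and omits the atom at the top level (points lying in \emph{all} sampled sets, i.e.\ the paper's $S_n^{\Delta_n,4}$), which must be shown negligible in case i and contributes the summand $\frac{1}{\kappa-2}$ in case ii; since the conclusion of the theorem is an exact constant $\varrho_a$, these approximate relations would have to be replaced by the exact identities (\ref{lebpartition}) before any constant can legitimately be claimed.
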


\begin{remark}The notation $\nu^{\pm}(x)\sim\tilde{K}_{\pm}x^{-\beta}$
here means that $x^{\beta}\nu^{\pm}(x)\rightarrow\tilde{K}_{\pm}$
when $x\downarrow0$. Moreover, such property only concerns to the
behavior of the L\'{e}vy measure of $L$ around zero. Hence, one can have
simultaneously that this condition is satisfied and that the second
moment of $L$ is finite. An example of such infinitely divisible
distribution is the normal inverse Gaussian distribution (see \cite{Sauri17}).

\end{remark}

Most of our estimates used in the proof of the previous theorem heavily
rely on the square integrability of $L$. Thus, it is natural to consider
the situation in which this condition does not hold anymore. The following
result give a partial answer to this matter.

\begin{theorem}\label{theoremmean-2-1}

Let Assumption \ref{assumptionlongmem-1} hold. Suppose that $L$
is strictly $\beta$-stable with parameters $(K_{+},K_{-},\beta,\hat{\gamma})$
and that $\mu=+\infty$. Then the following holds:
\begin{description}
\item [{i.}] If $1<\beta<\kappa-1$ , then 
\[
\frac{\Delta_{n}}{\left(n\Delta_{n}\right)^{1/\beta}}\mathbf{S}^{n}\overset{fd}{\rightarrow}Y,\,\,\,n\rightarrow\infty.
\]
where $Y$ is a strictly $\beta$-stable L\'{e}vy process satisfying that
(see (\ref{chfnctstrictlystabe}))
\[
\mathcal{C}(z\ddagger Y_{1})=\mathcal{C}(z\ddagger Y_{1})=\psi(z;\beta,\varrho_{a}K_{+,a},\varrho_{a}K,\hat{\gamma}),
\]
where $\varrho_{a}=\int_{0}^{\infty}s^{\beta}a^{\prime}(s)\mathrm{d}s$.
\item [{ii.}] If $2>\beta>\kappa-1$ then the conclusion in Theorem \ref{theoremmean3}
ii. remains valid.
\end{description}
\end{theorem}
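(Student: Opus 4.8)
The plan is to work entirely at the level of characteristic functions, exploiting that a strictly $\beta$-stable L\'evy basis has a $\beta$-homogeneous characteristic exponent. Since $1<\beta<2$, strict stability forces $\mathbb{E}(L')=0$, so $\mathbb{E}(X_t)=0$ and no centering survives: writing $f_{m,n}(r,y):=\sum_{k=0}^{m-1}\mathbf{1}_{A_{\Delta_n k}}(r,y)$ for the nonnegative, integer-valued overlap-counting kernel, one has $\mathbf{S}^n_t=S^{\Delta_n}_{[nt]}=\int_{\mathbb{R}^2}f_{[nt],n}\,\mathrm{d}L$. Because $\psi(cz)=c^{\beta}\psi(z)$ for $c\geq0$ and $f_{[nt],n}\geq0$, for a single time point the cumulant of $\lambda_n S^{\Delta_n}_{[nt]}$ equals $\lambda_n^{\beta}\psi(z)\int_{\mathbb{R}^2}f_{[nt],n}^{\beta}\,\mathrm{d}r\,\mathrm{d}y$, so the whole problem reduces to the asymptotics of $\int f^{\beta}$, and the correct normalisation is the one keeping $\lambda_n^{\beta}\int f^{\beta}$ bounded away from $0$ and $\infty$.

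The key computation goes as follows. Using $(r,y)\in A_s\iff r\leq s\leq r+\bar a(y)$, where $\bar a(y):=\sup\{u\geq0:a(u)\geq y\}$, together with the substitution $w=\bar a(y)$ (which, after reversing the limits, turns $\mathrm{d}y$ into $a'(w)\,\mathrm{d}w$), I would first replace the discrete count by its continuous Riemann proxy $\Delta_n f_{[nt],n}(r,y)\approx\ell(r,w;T_n):=\mathrm{Leb}\big([0,T_n]\cap[r,r+w]\big)$ with $T_n=[nt]\Delta_n\sim t\,n\Delta_n$; the error comes only from the $O(1)$ discretisation of the count and from $O(\Delta_n)$-boundary strips, and a routine estimate shows it is of lower order after scaling. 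The problem becomes the asymptotics of $J(T):=\int_{\mathbb{R}}\int_0^{\infty}\ell(r,w;T)^{\beta}a'(w)\,\mathrm{d}w\,\mathrm{d}r$, whose inner integral is elementary: splitting at $w=T$ gives $\int_{\mathbb{R}}\ell(r,w;T)^\beta\,\mathrm{d}r=w^{\beta}T-\tfrac{\beta-1}{\beta+1}w^{\beta+1}$ for $w\leq T$, and the $(T,w)$-symmetric expression for $w>T$. Inserting this and invoking Karamata's Theorem on each of the four resulting pieces (with $a'\in\mathrm{RV}_\kappa^\infty$, $a'(w)\sim c_a w^{-\kappa}$) isolates the two regimes. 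When $\beta<\kappa-1$ the term $T\int_0^T w^{\beta}a'(w)\,\mathrm{d}w$ dominates, giving $J(T)\sim\varrho_a T$ with $\varrho_a=\int_0^\infty s^{\beta}a'(s)\,\mathrm{d}s<\infty$; hence $\lambda_n^\beta\int f_{[nt],n}^\beta\to t\varrho_a$ and the limiting cumulant is $t\varrho_a\psi(z)=t\,\psi(z;\beta,\varrho_aK_+,\varrho_aK_-,\hat\gamma)$, which is exactly the L\'evy process $Y$ of part i. When $\beta>\kappa-1$ all four pieces are of the common order $T^{\beta+2-\kappa}$ (superlinear, since $\beta+2-\kappa>1$), and bookkeeping of the constants reproduces both the normalisation $1/\big(n(a(n\Delta_n)n\Delta_n)^{1/\beta}\big)$ and the constant $\varrho_a$ of Theorem \ref{theoremmean3} ii.

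For the finite-dimensional statement in part i I would pass to increments $h_{j,n}:=f_{[nt_j],n}-f_{[nt_{j-1}],n}\geq0$ over a grid $0=t_0<\cdots<t_d$ and show that the joint cumulant factorises, i.e.\ that the regions where two different increments are simultaneously nonzero contribute asymptotically nothing; on the complement only one $h_{j,n}$ is active and $\beta$-homogeneity applies blockwise, yielding independent strictly $\beta$-stable increments with the scales found above. This asymptotic independence is the main obstacle. Such a simultaneous overlap forces $[r,r+\bar a(y)]$ to straddle a block boundary, which (after the $w=\bar a(y)$ substitution) confines the contribution to an integral dominated by $\int_0^{cn\Delta_n}w^{\beta+1}a'(w)\,\mathrm{d}w\sim (n\Delta_n)^{\beta+2-\kappa}$; multiplying by $\lambda_n^\beta\Delta_n^{-\beta}=(n\Delta_n)^{-1}$ shows the overlap is of order $(n\Delta_n)^{\beta+1-\kappa}\to0$ precisely because $\beta<\kappa-1$. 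This is exactly the threshold separating parts i and ii: in part ii the same overlap is non-negligible, which is why only the one-dimensional marginal at $t=1$ survives and no L\'evy-process limit can hold.

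Finally, since Theorem \ref{theoremmean3} was established under square-integrability of $L$, which fails for a stable basis, part ii cannot be quoted verbatim; instead I would re-run its stable-domain argument with the kernel identity above. The point is that the functional $\lambda_n^\beta\int f^\beta$ and its limiting constant are insensitive to whether the contributing mass comes from a finite-variance or an infinite-variance seed, so the limit law $\xi$ and its cumulant are unchanged, and only the single marginal at $t=1$ is asserted there.
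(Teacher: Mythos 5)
Your proposal is correct, and it reaches the paper's conclusions by a genuinely different bookkeeping. Both arguments live at the level of cumulants and rest on the same two pillars --- the strict-stability homogeneity $\psi(cz)=c^{\beta}\psi(z)$, $c\geq 0$, and Karamata's theorem --- and both ultimately turn on the quantity $(n\Delta_{n})^{\beta+1-\kappa}\to 0$. But the paper does not work with the overlap-count kernel $f_{[nt],n}$ directly: it reuses the discrete partition decomposition $S_{m}^{\Delta}=S_{m}^{\Delta,1}+S_{m}^{\Delta,2}+S_{m}^{\Delta,3}+S_{m}^{\Delta,4}$ of (\ref{eq:decompSn}), kills the terms $l=2,3,4$ by strict stability plus Karamata (their cumulants are $\mathrm{O}(T_{n}^{\beta}a(T_{n}))=\mathrm{O}(T_{n}^{\beta+1-\kappa})$ --- the same order as your overlap estimate, which is no accident: those terms are exactly the straddling mass you control), writes the cumulant of the dominant term $S^{\Delta_{n},1}$ as $\psi(z)$ times an explicit Riemann sum against $a^{\prime}$, and passes to the limit $t\psi(z)\int_{0}^{\infty}s^{\beta}a^{\prime}(s)\mathrm{d}s$ by generalized dominated convergence, the domination being available precisely because $\beta<\kappa-1$ makes $\int_{0}^{\infty}s^{\beta}a^{\prime}(s)\mathrm{d}s<\infty$. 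For the finite-dimensional statement the paper invokes the \emph{exact} independence of increments of $(S_{[nt]}^{\Delta_{n},1})_{t}$ (a purely algebraic consequence of the triangular decomposition, cf. (\ref{IncremS11})--(\ref{IncremS12})), whereas you prove \emph{asymptotic} factorization of the joint cumulant; your quantitative threshold argument is correct and has the added interpretive value of explaining why no L\'evy-process limit can hold in regime ii. Your route buys a self-contained treatment in which parts i and ii fall out of a single deterministic functional $J(T)$ (I checked that your four Karamata terms recombine to the paper's constant $\varrho_{a}$ in case ii), and it honestly confronts the fact that the proof of Theorem \ref{theoremmean3} ii used square integrability of $L$, which the paper glosses over by declaring the proof of ii ``identical''; the paper's route is shorter given its earlier lemmas, since the decomposition and its cumulant formulas are shared across all the partial-sum theorems. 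The only places where you wave your hands --- the $\mathrm{O}(1)$/$\mathrm{O}(\Delta_{n})$ error in replacing the count $f$ by the tent function $\ell$, and the reduction of the straddling set's $r$-measure to $\min(w,T_{n})$ --- are indeed routine and close without difficulty.
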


\section{Convergence to a Gaussian moving average}\label{Sec-Moving average}
In this section we show that under certain assumptions a sequence of trawl processes converge to a Gaussian moving average. In particular, the main theorem of this section explores the case where the L\'{e}vy measure of the L\'{e}vy seed of the trawl process explodes as $n\rightarrow\infty$.

Let $T>0$, let $r,s,t\in[0,T]$ with $r\leq s\leq t$, and let $\tilde{B}_{t,s,r}:=A_{s}\setminus A_{t}\setminus A_{r}$ (see also Figure 1), namely
\begin{equation*}
Leb(\tilde{B}_{t,s,r})=\int_{r}^{s}a(s-p)-a(t-p)dp.
\end{equation*} Consider the following general assumption on the behaviour of the trawl set.
\begin{assumption}[On the behaviour of trawl sets]\label{A1}
	We assume that $a$ is monotone, that given any $t,s\in\mathbb{R}$, $Leb(A_{t}\setminus A_{s})\leq C(t-s)^{\frac{1}{2}+\frac{\epsilon}{2}}$ and that $Leb(\tilde{B}_{t,s,r})\leq C_{T}(t-r)^{1+\epsilon}$, where $C,\in(0,\infty)$, $C_{T}\in(0,T^{-(1+\epsilon)}]$ and $\epsilon$ can take any value in $(0,\infty)$.
\end{assumption}
\noindent Observe that under the above assumption we have $Leb(\tilde{B}_{t,s,r})\leq 1$, hence $Leb(\tilde{B}_{t,s,r})\geq (Leb(\tilde{B}_{t,s,r}))^{\beta}$ for any $\beta\geq 1$. 
\begin{remark}
	This assumption is only needed to prove tightness in the proof of Theorem \ref{TheSpecific}, since finite dimensional distribution convergence does not rely on it.
\end{remark}
\begin{example}[Exponential]
	For $p\geq0$ consider $a(p):=C_{T} e^{-p}$ with $C_{T}\in(0, T^{-2}]$, then by the mean value theorem we have that
	\begin{equation*}
	Leb(\tilde{B}_{t,s,r})=C_{T} \int_{r}^{s}e^{p-s}-e^{p-t}dp\leq C_{T} \int_{r}^{s}(t-s)e^{p-s}dp\leq C_{T} (s-r)(t-s)\leq C_{T} (t-r)^{2}\leq 1,
	\end{equation*}
	and
	\begin{equation*}
	Leb(A_{t}\setminus A_{s})=C_{T} \int_{s}^{t}e^{p-t}dp\leq C_{T}(t-s).
	\end{equation*}
\end{example}
\begin{example}[Bounded first derivative on compact intervals]\label{Example}
	Consider a monotone function $f:\mathbb{R}_{+}\to\mathbb{R}_{+}$ with $\sup\limits_{p\in[0,T]}|f'(p)|\leq M$ where $M\in(0,\infty)$. Consider $a(p):=C_{T} f(p)$ with $C_{T}\in(0, \frac{1}{MT^{2}}]$, then by the mean value theorem we have that
	\begin{equation*}
	Leb(\tilde{B}_{t,s,r})=C_{T} \int_{r}^{s}f(s-p)-f(t-p)dp\leq C_{T} \int_{r}^{s}(t-s)\max\limits_{x\in[s-p,t-p]} |f'(x)|dp
	\end{equation*}
	\begin{equation*}
	\leq C_{T}M \int_{r}^{s}(t-s)dp\leq C_{T}M (s-r)(t-s)\leq C_{T}M (t-r)^{2}\leq 1,
	\end{equation*}
	and
	\begin{equation*}
	Leb(A_{t}\setminus A_{s})=C_{T} \int_{s}^{t}f(t-p)dp\leq C(t-s).
	\end{equation*}
\end{example}
\begin{example}[$a(p)=(p)^{-\frac{1}{2}+\frac{\epsilon}{2}}$ case]
	Let $a(p)=(p)^{-\frac{1}{2}+\frac{\epsilon}{2}}$ (for $p$ small) then we have
	\begin{equation*}
	Leb(\tilde{B}_{t,s,r})= \int_{r}^{s}(s-p)^{-\frac{1}{2}+\frac{\epsilon}{2}}-(t-p)^{-\frac{1}{2}+\frac{\epsilon}{2}}dp
	\end{equation*}
	\begin{equation*}
	=C\left[(s-r)^{\frac{1}{2}+\frac{\epsilon}{2}}-(t-r)^{\frac{1}{2}+\frac{\epsilon}{2}}+(t-s)^{\frac{1}{2}+\frac{\epsilon}{2}}\right],
	\end{equation*}
	and notice that when $s=(t+r)/2$ (namely $t-s=s-r$) by denoting $x:=t-s$ then 
	\begin{equation*}
	x^{\frac{1}{2}+\frac{\epsilon}{2}}-(2x)^{\frac{1}{2}+\frac{\epsilon}{2}}+x^{\frac{1}{2}+\frac{\epsilon}{2}}=\left(2-2^{\frac{1}{2}+\frac{\epsilon}{2}}\right)x^{\frac{1}{2}+\frac{\epsilon}{2}}=C' x^{\frac{1}{2}+\frac{\epsilon}{2}},
	\end{equation*}
	which does \textbf{not} satisfies the desired condition of Assumption \ref{A1}.
\end{example}
In the above examples we have given a particular structure to the trawl function $a$ and then check whether such $a$ satisfies Assumption \ref{A1} or not. However, there is another modelling point of view we can take. Imagine that we would like to approximate a moving average with a particular kernel by a sequence of trawl processes. How can we choose the right $a$? In other words, how do we choose the right sequence of trawl processes? In the next result, which is the main result of this section, we answer this question too, namely we provide a link between $a$ and the kernel function of the moving average.
\begin{theorem}\label{TheSpecific}
	Let $a(h)=-\frac{d}{dh}\int_{0}^{\infty}g(s)g(h+s)ds$ for every $h\geq0$ where $g$ is an integrable function. Let $X_{t}^{(n)}$ be the associated trawl process with characteristics $(\gamma^{(n)},b^{(n)},\nu^{(n)})$, $n\in\mathbb{N}$. Assume that Assumption \ref{A1} holds and that $\int_{\mathbb{R}}\frac{x^{2}}{n}\nu^{(n)}(dx)\rightarrow 1$, $\int_{\mathbb{R}}\frac{|x^{3}|}{n\sqrt{n}}\nu^{(n)}(dx)\rightarrow 0$, $\int_{\mathbb{R}}\frac{x^{4}}{n^{2}}\nu^{(n)}(dx)$ is uniformly bounded, and $\frac{(b^{(n)})^{2}}{n}\rightarrow0$ as $n\rightarrow\infty$. Then we have
	\begin{equation*}
	\left\{\frac{1}{\sqrt{n}}\left(X_{t}^{(n)}-\mathbb{E}[X_{t}^{(n)}]\right)\right\}_{t\in[0,T]}\stackrel{d}{\rightarrow}\left\{\int_{-\infty}^{t}g(t-s)dB_{s}\right\}_{t\in[0,T]},
	\end{equation*}
	as $n\rightarrow\infty$, where $B$ is a one dimensional Brownian motion.
\end{theorem}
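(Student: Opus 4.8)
The plan is to establish weak convergence in the Skorokhod space $\mathcal{D}[0,T]$ by separately proving convergence of the finite-dimensional distributions (f.d.d.) of $\zeta^{(n)}:=\{n^{-1/2}(X_t^{(n)}-\mathbb{E}[X_t^{(n)}])\}_{t\in[0,T]}$ to those of the Gaussian moving average $Z_t:=\int_{-\infty}^t g(t-s)\,dB_s$, and tightness. Before either step I would record the target covariance. Writing $R(h):=\int_0^\infty g(u)g(h+u)\,du$, a change of variables gives $\mathrm{Cov}(Z_t,Z_{t'})=R(|t-t'|)$, and since $\int_0^\infty a=R(0)=\|g\|_2^2<\infty$ forces $g\in L^2$, Cauchy--Schwarz yields $R(\infty)=0$; hence the defining relation $a(h)=-R'(h)$ gives $\int_h^\infty a(u)\,du=R(h)$. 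Combining this with (\ref{TrawlACF}) and the identity $\mathrm{Leb}(A_t\cap A_{t'})=\int_{|t-t'|}^\infty a(u)\,du$ shows $n^{-1}\varGamma_{X^{(n)}}(h)=n^{-1}\mathrm{Var}({L^{(n)}}')R(h)$, while the moment hypotheses give $n^{-1}\mathrm{Var}({L^{(n)}}')=n^{-1}(b^{(n)})^2+n^{-1}\int x^2\,\nu^{(n)}(dx)\to 1$. Thus the prelimiting covariances converge to $R(|t-t'|)$.

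For the f.d.d. convergence I would use the cumulant functional. Any linear combination $\sum_j\theta_j(X_{t_j}^{(n)}-\mathbb{E}[X_{t_j}^{(n)}])$ equals $\int_{\mathbb{R}^2}f\,d\tilde{L}^{(n)}$ with $f=\sum_j\theta_j\mathbf{1}_{A_{t_j}}$ and $\tilde{L}^{(n)}$ the centered basis, so its cumulant at $z/\sqrt n$ is $\int_{\mathbb{R}^2}\psi_0^{(n)}(zf(\xi)/\sqrt n)\,d\xi$ with $\psi_0^{(n)}(u)=-\tfrac12(b^{(n)})^2u^2+\int(e^{iux}-1-iux)\,\nu^{(n)}(dx)$. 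Using $|e^{iw}-1-iw+\tfrac12 w^2|\le|w|^3/6$ I would expand, getting a leading term $-\tfrac{z^2}{2}\big(n^{-1}(b^{(n)})^2+n^{-1}\int x^2\nu^{(n)}\big)\int f^2\,d\xi$ and a remainder bounded by $\tfrac{|z|^3}{6}\,n^{-3/2}\big(\int|x|^3\nu^{(n)}\big)\int|f|^3\,d\xi$, which vanishes by the third-moment assumption. Since $\int f^2\,d\xi=\sum_{j,k}\theta_j\theta_k R(|t_j-t_k|)=\mathrm{Var}\big(\sum_j\theta_j Z_{t_j}\big)$, the cumulant converges to the Gaussian one, and by the Cram\'{e}r--Wold device the f.d.d. converge.

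The main obstacle is tightness, which I would derive from the moment criterion $\mathbb{E}\big[(\zeta_t^{(n)}-\zeta_s^{(n)})^2(\zeta_s^{(n)}-\zeta_r^{(n)})^2\big]\le C(t-r)^{1+\epsilon}$ for $r\le s\le t$, with $C$ uniform in $n$. The key geometric observation is that, for the centered increments $U:=\tilde{X}_t-\tilde{X}_s$ and $V:=\tilde{X}_s-\tilde{X}_r$, the supports $A_t\triangle A_s$ and $A_s\triangle A_r$ overlap only on $\tilde{B}_{t,s,r}$. Decomposing the plane into the five pairwise disjoint pieces $A_t\setminus A_s$, $(A_s\setminus A_t)\cap\{p\le r\}$, $\tilde{B}_{t,s,r}$, $(A_s\setminus A_r)\setminus\tilde{B}_{t,s,r}$ and $A_r\setminus A_s$, and writing $U=P-W_3$, $V=W_3+Q$ with $W_3=\tilde{L}^{(n)}(\tilde{B}_{t,s,r})$ and $P,Q,W_3$ mutually independent and centered, the expansion collapses to $\mathbb{E}[U^2V^2]=\mathbb{E}[P^2]\mathbb{E}[W_3^2]+\mathbb{E}[P^2]\mathbb{E}[Q^2]+\mathbb{E}[W_3^2]\mathbb{E}[Q^2]+\mathbb{E}[W_3^4]$, all odd-order cross terms vanishing by centering. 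Here I would invoke the fourth-moment formula for $\tilde{L}^{(n)}(\cdot)$ computed in the Appendix, together with $n^{-2}\int x^4\nu^{(n)}$ uniformly bounded and $n^{-1}\mathrm{Var}({L^{(n)}}')$ bounded.

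It then remains to insert the estimates of Assumption \ref{A1}. Since $\mathrm{Leb}(A_t\setminus A_s)=\mathrm{Leb}(A_s\setminus A_t)\le C(t-s)^{1/2+\epsilon/2}$ and likewise for the $(r,s)$ pairs, every factor built from $P$ contributes $(t-s)^{1/2+\epsilon/2}$ and every factor from $Q$ contributes $(s-r)^{1/2+\epsilon/2}$, while the crucial overlap term carries $\mathrm{Leb}(\tilde{B}_{t,s,r})\le C_T(t-r)^{1+\epsilon}$. The cross term $\mathbb{E}[P^2]\mathbb{E}[Q^2]$ then produces $(t-s)^{1/2+\epsilon/2}(s-r)^{1/2+\epsilon/2}\le(t-r)^{1+\epsilon}$, and every remaining term carries at least one factor $(t-r)^{1+\epsilon}$; after multiplying by $n^{-2}$ and using the moment bounds (and $t-r\le T$ to absorb higher powers) this yields $C(t-r)^{1+\epsilon}$. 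This is precisely why the exponents $1/2+\epsilon/2$ and $1+\epsilon$ are imposed in Assumption \ref{A1}. Together with $\sup_n\mathbb{E}[(\zeta_0^{(n)})^2]<\infty$ this gives tightness in $\mathcal{D}[0,T]$; combined with the f.d.d. convergence and the continuity of the limiting process, the theorem follows.
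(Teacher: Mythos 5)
Your proposal is correct and follows essentially the same route as the paper: finite-dimensional convergence via the L\'evy--Khintchine cumulant expansion, with the third-moment hypothesis killing the remainder and the identity $\mathrm{Leb}(A_t\cap A_s)=\int_{|t-s|}^{\infty}a(u)\,du=\int_{0}^{\infty}g(u)g(|t-s|+u)\,du$ identifying the limiting covariance, and tightness via Billingsley's moment criterion using the disjoint-set decomposition of the overlapping trawls, the Appendix fourth-moment formula, and Assumption \ref{A1}. The only difference is cosmetic: your $P$, $Q$, $W_3$ group the paper's pieces as $\tilde{L}^{(n)}(\tilde{A}_{t,s})-\tilde{L}^{(n)}(\tilde{E}_{t,s,r})$, $\tilde{L}^{(n)}(\tilde{C}_{t,s,r})-\tilde{L}^{(n)}(\tilde{D}_{s,r})$ and $\tilde{L}^{(n)}(\tilde{B}_{t,s,r})$, which condenses the eight variance-product terms into three but is mathematically identical.
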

\begin{remark}
	Observe that in case we have $g$ is differentiable and positive monotone (bounded) then by the monotone (bounded) convergence theorem we have $\frac{d}{dh}\int_{0}^{\infty}g(s)g(h+s)ds=\int_{0}^{\infty}g(s)g'(h+s)ds.$
\end{remark}
\begin{remark}
	The theorem can be generalised using a general sequence of real numbers $(a_{n})_{n\in\mathbb{N}}$ instead of $(\frac{1}{\sqrt{n}})_{n\in\mathbb{N}}$. However, since the computations are exactly the same we decided to leave it with the more natural notation $\frac{1}{\sqrt{n}}$.
\end{remark}
\begin{example}[The Poisson case]\label{Ex}
	In this example we are going to show that the assumptions of Theorem \ref{TheSpecific} are satisfied for the case $X_{t}^{(n)}=L^{(n)}(A_{t})\sim \text{Poisson}(\lambda^{(n)} Leb(A))$ for all $t\in[0,T]$, where $\lambda^{(n)}$ is the intensity parameter, or equivalently, for the case $L'^{(n)}\sim \text{Poisson}(\lambda^{(n)})$. In particular, we have that
	\begin{equation*}
	\mathcal{C}(z\ddagger L'^{(n)})=\lambda^{(n)}\left(e^{iz}-1\right).
	\end{equation*}
	In order to satisfies the assumptions we have to impose that $\lambda^{(n)}=n+o(n)$ (\textit{e.g.} $\lambda^{(n)}=n+bn^{\gamma}$ for $b\in\mathbb{R}$ and $\gamma<1$). Indeed, 
	\begin{equation*}
	\int_{\mathbb{R}}\frac{x^{2}}{n}\nu^{(n)}(dx)\rightarrow 1\Leftrightarrow \frac{\lambda^{(n)}}{n}\rightarrow 1,\quad\textnormal{and}\quad \int_{\mathbb{R}}\frac{|x^{3}|}{n\sqrt{n}}\nu^{(n)}(dx)\rightarrow 0\Leftrightarrow \frac{\lambda^{(n)}}{n\sqrt{n}}\rightarrow 0,
	\end{equation*}
	as $n\rightarrow\infty$.
\end{example}
\begin{example}
	Let $g:\mathbb{R}_{+}\to\mathbb{R}_{+}$ be integrable, monotonically decreasing and second order differentiable with $g''(x)>0$, $\forall x\in\mathbb{R}_{+}$. Let $C_{T}=\left(T^{2}g''(0)\int_{0}^{\infty}g(s)ds\right)^{-1}$. Then $a(h)=-C_{T}\int_{0}^{\infty}g(s)g'(h+s)ds$ satisfies the assumptions of Theorem \ref{TheSpecific}. Indeed, it is possible to see that $a$ is positive (since $g'$ is negative), is monotonically decreasing (since $g'$ is monotonically increasing), and satisfies Assumption 5 thanks to Example \ref{Example}, indeed 
	\begin{equation*}
	\sup\limits_{p\in[0,T]}\frac{1}{C_{T}}|a'(p)|=\sup\limits_{p\in[0,T]}\int_{0}^{\infty}g(s)g''(p+s)ds\leq g''(0)\int_{0}^{\infty}g(s)ds<\infty.
	\end{equation*}
\end{example}
\subsection{Existence of the limiting moving average}
In this subsection we answer the following question: Does our limiting process have a moving average representation?
\\ Indeed, we only know that there is a limiting process such that it is a stationary centred Gaussian process with covariance structure given by $\int_{-\infty}^{\min(t,s)}g(t-r)g(s-r)dr$ for $t,s\in[0,T]$ where $g$ is a continuous function such that $g(x)=0$ for $x<0$. The answer is positive and it is given by the following proposition. 
\begin{proposition}
	Let $Y_{t}$ be a stationary Gaussian process with covariance $\int_{-\infty}^{\min(t,s)}g(t-r)g(s-r)dr$ for $t,s\in\mathbb{R}$. Assume that 
	\begin{equation}\label{ass}
	\int_{0}^{\infty}\int_{0}^{\infty}g(s+h)g(s)dsdh<\infty,\quad\text{and}\quad\int_{\mathbb{R}}\frac{\log(f(u))}{1+u^{2}}du>-\infty,
	\end{equation}
	where $f$ is the spectral density. Then we have that the limiting object can assume a moving average representation
	\begin{equation*}
	\left\{Y_{t}\right\}_{t\in\mathbb{R}}\stackrel{d}{=}\left\{\int_{-\infty}^{t}\bar{g}(t-s)d\bar{B}_{s}\right\}_{t\in\mathbb{R}},
	\end{equation*}
	where the integral is well defined since $\bar{B}$ is a one dimensional Brownian motion and $\bar{g}\in L^{2}(\mathbb{R})$ with $\bar{g}(x)=0$ for $x< 0$.
\end{proposition}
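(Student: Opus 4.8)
The plan is to move to the frequency domain and reduce the assertion to a \emph{causal spectral factorization}. First I would record that the integral $\int_{-\infty}^{\min(t,s)}g(t-r)g(s-r)\,dr$ depends on $(t,s)$ only through $t-s$: the substitution $u=\min(t,s)-r$ gives, for the centred stationary Gaussian process $Y$, the autocovariance
\[
\gamma(\tau)=\int_{0}^{\infty}g(u)\,g(u+|\tau|)\,du,\qquad \tau\in\mathbb{R}.
\]
The finiteness of $\gamma(0)=\int_0^\infty g(u)^2\,du$ (needed for $Y$ to exist) forces $g\in L^2([0,\infty))$, whence $\gamma$ is continuous and $Y$ is mean-square continuous. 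The first hypothesis in \eqref{ass} is precisely $\int_0^\infty\gamma(h)\,dh<\infty$, so $\gamma\in L^1(\mathbb{R})$ and, by the Wiener--Khinchin theorem, $Y$ has a continuous bounded nonnegative spectral density $f$. Since $\gamma$ is the autocorrelation of the causal $L^2$ kernel $g$, one computes $f(u)=\tfrac{1}{2\pi}|\hat g(u)|^2$, where $\hat g$ is the Fourier transform of $g$ (extended by $0$ to the negative axis).

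The next step is to note that two centred stationary Gaussian processes coincide in law if and only if they have the same spectral density. Hence it suffices to produce a function $\bar g\in L^2(\mathbb{R})$ with $\bar g(x)=0$ for $x<0$ satisfying $\tfrac{1}{2\pi}|\hat{\bar g}(u)|^2=f(u)$ for a.e.\ $u$. Given such a $\bar g$, the process $\tilde Y_t:=\int_{-\infty}^{t}\bar g(t-s)\,d\bar B_s$ is well defined (because $\bar g\in L^2$), centred, stationary and Gaussian, and by the It\^o isometry it has spectral density $\tfrac{1}{2\pi}|\hat{\bar g}|^2=f$; consequently $\tilde Y$ and $Y$ share the autocovariance $\gamma$ and therefore $\tilde Y\stackrel{d}{=}Y$, which is exactly the claimed representation.

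The construction of $\bar g$ is the crux of the argument and is where the second hypothesis enters. Requiring $\bar g$ to be supported on $[0,\infty)$ means, by the Paley--Wiener theorem, that $\hat{\bar g}$ must be the nontangential boundary value of a function in the Hardy space $H^2(\mathbb{C}^+)$ of the upper half-plane. The existence of such an analytic (outer) factor of $2\pi f$ is equivalent to the Szeg\H{o}--Krein integrability condition $\int_{\mathbb{R}}\frac{\log f(u)}{1+u^2}\,du>-\infty$, i.e.\ the second assumption in \eqref{ass}. Concretely I would set $w:=\sqrt{2\pi f}$ and define the canonical outer function
\[
\Phi(z):=\exp\!\left(\frac{1}{\pi i}\int_{\mathbb{R}}\Big(\frac{1}{u-z}-\frac{u}{1+u^2}\Big)\log w(u)\,du\right),\qquad \mathrm{Im}\,z>0,
\]
whose real part on the boundary is the Poisson integral of $\log w$, so that $\Phi\in H^2(\mathbb{C}^+)$ with $|\Phi(u)|^2=2\pi f(u)$ a.e.; the Szeg\H{o}--Krein condition is exactly what makes this integral well defined and $\Phi$ nonvanishing. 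The Paley--Wiener theorem then furnishes $\bar g\in L^2(\mathbb{R})$ vanishing on $(-\infty,0)$ with $\hat{\bar g}=\Phi$, giving the required factorization.

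The main obstacle is this outer factorization step: it is the only non-soft ingredient, and it is precisely what dictates the Szeg\H{o}--Krein hypothesis; everything else (stationarity of $Y$, the identification $f=\tfrac{1}{2\pi}|\hat g|^2$, and the passage from equal spectral densities to equality in law) is a routine dictionary between covariance, spectral density and moving-average kernel. I would close by verifying the minor points deferred above, namely mean-square continuity of $Y$ (via continuity of $\gamma$, inherited from $g\in L^2$), the a.e.\ matching of the two spectral densities, and the fact that the Gaussianity of both $Y$ and $\tilde Y$ upgrades equality of autocovariances to equality of all finite-dimensional distributions.
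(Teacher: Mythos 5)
Your proposal is correct, and at the level of strategy it matches the paper: both arguments reduce the claim to spectral analysis and both hinge on the Szeg\H{o}--Krein log-integrability condition. The difference is in execution. The paper's proof is almost entirely by citation: mean-square continuity of $Y$ is taken from Remark 2.4 of Cheridito, absolute continuity of the spectral distribution is quoted from Doob (p.~532) using the first hypothesis in \eqref{ass}, and the causal moving-average representation is then obtained in one stroke from Karhunen's Satz 5 --- which is precisely the classical statement that an absolutely continuous spectral distribution whose density satisfies the Szeg\H{o} condition admits a one-sided moving average. You instead prove that key step yourself: you identify the spectral density explicitly as $f=\frac{1}{2\pi}|\hat g|^{2}$, reduce the proposition to finding a causal $\bar g\in L^{2}$ with $|\hat{\bar g}|^{2}=2\pi f$, and construct $\hat{\bar g}$ as the canonical outer function in the Hardy space $H^{2}(\mathbb{C}^{+})$, invoking Paley--Wiener to pull it back to a kernel supported on $[0,\infty)$. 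What your route buys is a self-contained argument (modulo standard Hardy-space facts) and an explicit formula for $\bar g$ in terms of $f$; what the paper's route buys is brevity. Two minor points. First, like the paper, you pass from $\int_{0}^{\infty}\gamma(h)\,dh<\infty$ to $\gamma\in L^{1}(\mathbb{R})$ without absolute values; if $g$ may change sign this gloss can be repaired by noting that $\gamma$ is the autocorrelation of $g\in L^{2}$, so the spectral measure is automatically absolutely continuous with density $\frac{1}{2\pi}|\hat g|^{2}\in L^{1}$ by Plancherel, making the Wiener--Khinchin step sign-proof (and in fact rendering the first hypothesis unnecessary for this part). Second, you should record why the defining integral of the outer function converges, namely that $\int_{\mathbb{R}}\frac{\log^{+}w(u)}{1+u^{2}}\,du<\infty$ follows from $\log^{+}w\leq w$ and $w=\sqrt{2\pi f}\in L^{2}$ via Cauchy--Schwarz, so that the hypothesis $\int_{\mathbb{R}}\frac{\log f(u)}{1+u^{2}}\,du>-\infty$ supplies exactly the missing lower bound; this is routine but is the point where the second assumption in \eqref{ass} enters.
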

\begin{proof}
	First, we have that
	\begin{equation*}
	\lim\limits_{t\rightarrow0}\mathbb{E}[(Y_{t}-Y_{0})^{2}]=2\mathbb{E}[Y_{0}^{2}]-2\lim\limits_{t\rightarrow0}\mathbb{E}[Y_{t}Y_{0}]=2\int_{-\infty}^{0}g(-r)^{2}dr-2\lim\limits_{t\rightarrow0}\int_{-\infty}^{0}g(t-r)g(-r)dr=0,
	\end{equation*}
	by approximating $g$ with continuous functions with compact support (see Remark 2.4 of \cite{Cheridito2004}). Moreover, from
	\begin{equation*}
	\int_{0}^{\infty}\int_{0}^{\infty}g(s+h)g(s)dsdh<\infty,
	\end{equation*}
	we know that $Y_{t}$ has an absolutely continuous spectral distribution (see \cite{Doob1990}, page 532). Then, using the second part of assumption $(\ref{ass})$ together with \cite{Karhunen1950} Satz 5 we conclude the proof.
\end{proof}
Notice that the spectral density $f$ in our case is given by:
\begin{equation*}
f(u)=\int_{0}^{\infty}\int_{0}^{\infty}g(s+h)g(s)dse^{-2\pi iuh}dh,
\end{equation*}
and since we are in the real valued framework, it reduces to
\begin{equation*}
f(u)=\int_{0}^{\infty}\int_{0}^{\infty}g(s+h)g(s)ds\cos(2\pi uh)dh.
\end{equation*}
Further, in terms of the trawl function we have
\begin{equation*}
\int_{0}^{\infty}\int_{0}^{\infty}g(s+h)g(s)dsdh=\int_{0}^{\infty}\int_{0}^{\infty}a(s+h)dsdh,\quad\textnormal{and}\quad f(u)=\int_{0}^{\infty}\int_{0}^{\infty}a(s+h)ds\cos(2\pi uh)dh.
\end{equation*}
\section{Proofs\label{sec:Proofs}}

Through all our proofs the non-random positive constants will be denoted
by the generic symbol $C>0$, and they may change from line to line.
Additionally, for simplicity and without loss of generality, we may
and do assume that $\mathbb{E}(L^{\prime})=0$ and $Var(L^{\prime})=1$
in such a way that $\varGamma_{X}(h)=\int_{h}^{\infty}a(s)\mathrm{d}s$,
for $h\geq0$. We note that below we will use the notation $T_{n}=n\Delta_{n}$

\subsection{Technical lemmas}

We start by analyzing the variance of $S_{m}^{\Delta}$.

\begin{lemma}\label{lemmavarianceSm}

Suppose that $\mathbb{E}(\left|L^{\prime}\right|^{2})<\infty$. Then
\begin{equation}
Var(S_{m}^{\Delta})=\frac{2}{\Delta^{2}}\int_{0}^{m\Delta}\int_{0}^{r}\varGamma_{X}(s)\mathrm{d}s\mathrm{d}r+\mathrm{O}(m),\,\,\,m\in\mathbb{N},\Delta>0.\label{approximationvarS}
\end{equation}
Furthermore, if $\Delta m\rightarrow\mu\in[0,+\infty]$ as $\Delta\downarrow0$
and $m\uparrow\infty$, then
\begin{description}
\item [{i.}] If $\mu=0$, then 
\[
\frac{1}{m^{2}}Var(S_{m}^{\Delta})\rightarrow\varGamma_{X}(0).
\]
\item [{ii.}] If $0<\mu<\infty$, then 
\[
\Delta^{2}Var(S_{m}^{\Delta})\rightarrow2\int_{0}^{\beta}\int_{0}^{r}\varGamma_{X}(s)\mathrm{d}s\mathrm{d}r.
\]
\item [{iii.}] If $\mu=+\infty$, then
\begin{description}
\item [{a)}] If $\int_{0}^{\infty}\int_{r}^{\infty}a(s)\mathrm{d}s\mathrm{d}r<\infty$,
then $Var(S_{m}^{\Delta})\sim\int_{\mathbb{R}}\varGamma_{X}(s)\mathrm{d}s\frac{m}{\Delta}$,
as $\Delta\downarrow0$ and $m\uparrow\infty$.
\item [{b)}] If $\int_{0}^{\infty}\int_{r}^{\infty}a(s)\mathrm{d}s\mathrm{d}r=+\infty$
assume in addition that $a\in\mathrm{RV}_{\alpha}^{\infty}$ with
$1<\alpha<2$. Then, $Var(S_{m}^{\Delta})\sim c_{\alpha}Var(L')a(m\Delta)m^{3}\Delta$,
$c_{\alpha}=\frac{2}{(\alpha-1)(2-\alpha)(3-\alpha)}$.
\end{description}
\end{description}
\end{lemma}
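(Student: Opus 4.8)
The plan is to reduce everything to the deterministic analysis of the autocovariance via \eqref{TrawlACF}. First I would compute the variance exactly: by strict stationarity $\mathrm{Cov}(X_{\Delta j},X_{\Delta k})=\varGamma_X(\Delta|j-k|)$, so grouping the double sum according to the lag $h=|j-k|$ gives
\[
\mathrm{Var}(S_m^{\Delta})=\sum_{j,k=0}^{m-1}\varGamma_X(\Delta|j-k|)=m\varGamma_X(0)+2\sum_{h=1}^{m-1}(m-h)\varGamma_X(\Delta h).
\]
The diagonal term $m\varGamma_X(0)$ is already $\mathrm{O}(m)$, so establishing \eqref{approximationvarS} amounts to showing the off-diagonal sum equals $\tfrac{2}{\Delta^2}\int_0^{m\Delta}\int_0^r\varGamma_X(s)\,\mathrm{d}s\,\mathrm{d}r$ up to $\mathrm{O}(m)$.

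For that, I would swap the order of integration to write $\int_0^{m\Delta}\int_0^r\varGamma_X(s)\,\mathrm{d}s\,\mathrm{d}r=\int_0^{m\Delta}\phi(s)\,\mathrm{d}s$ with $\phi(s):=(m\Delta-s)\varGamma_X(s)$, and observe that $\phi(\Delta h)=\Delta(m-h)\varGamma_X(\Delta h)$, so the off-diagonal sum equals $\tfrac1\Delta\sum_{h=1}^{m-1}\phi(\Delta h)$. Since $a$ is non-negative and non-increasing, $\varGamma_X$ is non-increasing, hence $\phi$ is a non-negative non-increasing function on $[0,m\Delta]$ with $\phi(m\Delta)=0$. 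The monotone Riemann-sum bounds then give
\[
0\le \tfrac1\Delta\int_0^{m\Delta}\phi(s)\,\mathrm{d}s-\sum_{h=1}^{m-1}\phi(\Delta h)\le \phi(0)=m\Delta\varGamma_X(0),
\]
and multiplying by $2/\Delta$ shows the off-diagonal sum matches $\tfrac{2}{\Delta^2}\int_0^{m\Delta}\phi$ up to an error in $[0,2m\varGamma_X(0)]$, i.e.\ $\mathrm{O}(m)$. This yields \eqref{approximationvarS}.

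Writing $\tau:=m\Delta$ and $F(\tau):=\int_0^\tau(\tau-s)\varGamma_X(s)\,\mathrm{d}s$, formula \eqref{approximationvarS} reads $\mathrm{Var}(S_m^{\Delta})=\tfrac{2}{\Delta^2}F(\tau)+\mathrm{O}(m)$, and the three regimes follow by analysing $F(\tau)$. For i ($\mu=0$): the substitution $s=\tau u$ gives $\tfrac{2}{\tau^2}F(\tau)=2\int_0^1(1-u)\varGamma_X(\tau u)\,\mathrm{d}u$, which by continuity of $\varGamma_X$ at $0$ tends to $2\varGamma_X(0)\int_0^1(1-u)\,\mathrm{d}u=\varGamma_X(0)$; dividing by $m^2$ turns the remainder into $\mathrm{O}(1/m)\to0$. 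For ii ($0<\mu<\infty$): $\Delta^2\mathrm{Var}(S_m^{\Delta})=2F(\tau)+\mathrm{O}(\Delta\tau)$ with $\Delta\tau\to0$, while $F(\tau)\to F(\mu)$ by continuity, giving the stated limit. For iii(a): the hypothesis forces $C_0:=\int_0^\infty\varGamma_X(s)\,\mathrm{d}s<\infty$, so $\tfrac1\tau F(\tau)=\int_0^\infty(1-s/\tau)^{+}\varGamma_X(s)\,\mathrm{d}s\to C_0$ by dominated convergence, whence $\tfrac{2}{\Delta^2}F(\tau)\sim 2C_0\tfrac{\tau}{\Delta^2}=\int_{\mathbb{R}}\varGamma_X(s)\,\mathrm{d}s\,\tfrac{m}{\Delta}$, and the $\mathrm{O}(m)$ term is $\mathrm{o}(m/\Delta)$ since $\Delta\to0$.

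The delicate case is iii(b), which I expect to be the main obstacle, and here I would apply Karamata's Theorem (KT) iteratively. With $a\in\mathrm{RV}_{\alpha}^{\infty}$ and $1<\alpha<2$, KT(2) with $\rho=0$ gives $\varGamma_X(s)=\int_s^\infty a\sim \tfrac{s\,a(s)}{\alpha-1}$, so $\varGamma_X\in\mathrm{RV}_{\alpha-1}^{\infty}$. Since $0\ge(\alpha-1)-1$, KT(1) with $\rho=0$ applies to $\varGamma_X$ and yields $\int_0^\tau\varGamma_X\sim\tfrac{\tau\varGamma_X(\tau)}{2-\alpha}\in\mathrm{RV}_{\alpha-2}^{\infty}$ (the constant from the lower limit $x_0$ in KT is negligible since this integral diverges). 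A second application of KT(1), valid because $0\ge(\alpha-2)-1$, then gives $F(\tau)\sim\tfrac{\tau}{3-\alpha}\cdot\tfrac{\tau\varGamma_X(\tau)}{2-\alpha}\sim\tfrac{\tau^3 a(\tau)}{(\alpha-1)(2-\alpha)(3-\alpha)}$. Substituting $\tau=m\Delta$ and $\tau^3/\Delta^2=m^3\Delta$ produces $\tfrac{2}{\Delta^2}F(\tau)\sim c_\alpha\,a(m\Delta)\,m^3\Delta$. The final bookkeeping is to show the $\mathrm{O}(m)$ remainder is negligible: the ratio $m/(a(m\Delta)m^3\Delta)=\Delta/(a(\tau)\tau^2)$ tends to $0$, since $a(\tau)\tau^2=\tau^{2-\alpha}\ell(\tau)\to\infty$ (a positive power times a slowly varying factor) while $\Delta\to0$, so the remainder is $\mathrm{o}(a(m\Delta)m^3\Delta)$. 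The crux throughout iii(b) is verifying that regular variation is preserved under each integration, that the KT index hypotheses $\rho\ge(\text{index})-1$ hold — which they do precisely because $1<\alpha<2$ — and then confirming that the $\mathrm{O}(m)$ error lies strictly below the leading regularly varying order.
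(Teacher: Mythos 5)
Your proposal is correct, and its overall architecture---the exact variance formula from stationarity, a comparison of the lag sum with the double integral up to an $\mathrm{O}(m)$ error, then the case analysis via dominated convergence and Karamata---mirrors the paper's proof. Where you genuinely differ is in how the key approximation (\ref{approximationvarS}) is established. The paper writes the discrepancy between $\Delta^{2}\sum_{i=1}^{m-1}\sum_{j=1}^{i}\varGamma_{X}(j\Delta)$ and $\int_{0}^{m\Delta}\int_{0}^{r}\varGamma_{X}(s)\,\mathrm{d}s\,\mathrm{d}r$ as a sum of four remainder terms $R_{1},\dots,R_{4}$ and bounds each one using the fact that $\varGamma_{X}$ has derivative $-Var(L')a$ together with the monotonicity of $a$, i.e.\ a mean-value estimate $\left|\varGamma_{X}(j\Delta)-\varGamma_{X}(s)\right|\leq C\Delta a(j\Delta)$ on each cell of the grid. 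You instead collapse the double integral by Fubini to $\int_{0}^{m\Delta}\phi(s)\,\mathrm{d}s$ with $\phi(s)=(m\Delta-s)\varGamma_{X}(s)$, observe that $\phi$ is non-negative and non-increasing (a product of two non-negative non-increasing factors), and invoke the monotone Riemann-sum sandwich, which yields the explicit error bound $2m\varGamma_{X}(0)$ in one stroke. Your route is more elementary---it uses only monotonicity of $\varGamma_{X}$, not its differentiability---and is arguably cleaner. The remaining steps coincide with the paper's: parts i, ii and iii(a) follow by substitution and dominated convergence exactly as you describe (the paper compresses this into one sentence), and your iii(b) is the same iterated Karamata computation the paper invokes, namely $\varGamma_{X}(s)\sim sa(s)/(\alpha-1)$, then $\int_{0}^{\tau}\varGamma_{X}\sim\tau\varGamma_{X}(\tau)/(2-\alpha)$, then $F(\tau)\sim\tau^{3}a(\tau)/\left((\alpha-1)(2-\alpha)(3-\alpha)\right)$, combined with the observation that $a(m\Delta)m^{2}\Delta=(m\Delta)^{2-\alpha}l(m\Delta)\Delta^{-1}\rightarrow+\infty$ so that the $\mathrm{O}(m)$ term is negligible; your write-up usefully spells out the index checks for each application of KT (including the harmless lower-limit constant), which the paper leaves implicit in the phrase ``By KT we get.''
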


\begin{proof}We have that 
\begin{equation}
Var(S_{m}^{\Delta})=m\varGamma_{X}(0)+2\sum_{i=1}^{m-1}\sum_{j=1}^{i}\varGamma_{X}(j\Delta).\label{varS_}
\end{equation}
Now, 
\begin{align*}
R(m,\Delta):=\frac{1}{\Delta^{2}}\left\{ \Delta^{2}\sum_{i=1}^{m-1}\sum_{j=1}^{i}\varGamma_{X}(j\Delta)-\int_{0}^{m\Delta}\int_{0}^{r}\varGamma_{X}(s)\mathrm{d}s\mathrm{d}r\right\}  &
\end{align*}
\begin{align*}
& =R_{1}(m,\Delta)+R_{2}(m,\Delta)+R_{3}(m,\Delta)+R_{4}(m,\Delta),
\end{align*}
where 
\begin{align*}
R_{1}(m,\Delta) & :=\frac{1}{\Delta^{2}}\sum_{i=1}^{m-1}\sum_{j=1}^{i}\int_{i\Delta}^{(i+1)\Delta}\int_{j\Delta}^{(j+1)\Delta}\left[\varGamma_{X}(j\Delta)-\varGamma_{X}(s)\right]\mathrm{d}s\mathrm{d}r;\\
R_{2}(m,\Delta) & :=-\frac{1}{\Delta^{2}}\int_{0}^{\Delta}\int_{0}^{r}\varGamma_{X}(s)\mathrm{d}s\mathrm{d}r;\\
R_{3}(m,\Delta) & :=\frac{1}{\Delta^{2}}\sum_{i=1}^{n-1}\int_{i\Delta}^{(i+1)\Delta}\int_{r}^{(i+1)\Delta}\varGamma_{X}(s)\mathrm{d}s\mathrm{d}r;\\
R_{4}(m,\Delta) & :=-\frac{m}{\Delta}\int_{0}^{\Delta}\varGamma_{X}(s)\mathrm{d}s.
\end{align*}
From (\ref{TrawlACF}), $\varGamma_{X}$ is non-increasing on $\mathbb{R}^{+},$
with derivative $-Var(L')a$. Therefore 
\begin{align*}
\left|R_{1}(m,\Delta)\right| & \leq C\frac{1}{\Delta}\sum_{i=1}^{m-1}\sum_{j=1}^{i}\int_{i\Delta}^{(i+1)\Delta}\int_{j\Delta}^{(j+1)\Delta}a(j\Delta)\mathrm{d}s\mathrm{d}r\\
 & =C\frac{1}{\Delta}\sum_{i=1}^{m-1}\sum_{j=1}^{i}\int_{i\Delta}^{(i+1)\Delta}\int_{(j-1)\Delta}^{j\Delta}a(j\Delta)\mathrm{d}s\mathrm{d}r\\
 & \leq C\frac{1}{\Delta}\sum_{i=1}^{m-1}\sum_{j=1}^{i}\int_{i\Delta}^{(i+1)\Delta}\int_{(j-1)\Delta}^{j\Delta}a(s)\mathrm{d}s\mathrm{d}r\\
 & \leq C\frac{1}{\Delta}\int_{0}^{m\Delta}\int_{0}^{r}a(s)\mathrm{d}s\mathrm{d}r.
\end{align*}
In a similar way, we obtain that 
\begin{align*}
\left|R_{2}(m,\Delta)\right| & +\left|R_{3}(m,\Delta)\right|\leq\frac{1}{\Delta}\int_{0}^{\Delta m}\varGamma_{X}(r)\mathrm{d}r.
\end{align*}
All above implies that
\begin{align*}
R(m,\Delta) & \leq Cm\varGamma_{X}(0).
\end{align*}
This estimate together with (\ref{varS_}) give (\ref{approximationvarS}).

Now assume that $\Delta m\rightarrow\beta\in[0,+\infty]$. i., ii.
and part a) of iii. follow immediately by (\ref{approximationvarS})
and the Dominated Convergence Theorem. Therefore, for the rest of
the proof we will assume that $\Delta m\rightarrow+\infty$ and that
$a\in\mathrm{RV}_{\alpha}^{\infty}$ in which $1<\alpha<2$. By KT
we get that 
\[
\frac{2}{\Delta^{2}}\int_{0}^{m\Delta}\int_{0}^{r}\varGamma_{X}(s)\mathrm{d}s\mathrm{d}r\sim c_{\alpha}a(m\Delta)m^{3}\Delta,\,\,\,\text{as }\Delta m\rightarrow+\infty.
\]
Since $a\in\mathrm{RV}_{\alpha}^{\infty}$, it admits the representation
$a(x)=x^{-\alpha}l(x)$, with $l$ a slowly varying function at $\infty$.
Thus,
\[
a(m\Delta)m^{2}\Delta=(m\Delta)^{2-\alpha}l(m\Delta)\Delta^{-1}\rightarrow+\infty,
\]
where we have used that for any slowly varying function $l(x)x^{\rho}\rightarrow+\infty$
as $x\uparrow\infty$ whenever $\rho>0$. Consequently, by (\ref{approximationvarS}),
we deduce that
\[
\frac{1}{a(m\Delta)m^{3}\Delta}\left|Var(S_{m}^{\Delta})-\frac{2}{\Delta}\int_{0}^{m\Delta}\int_{0}^{r}\varGamma_{X}(s)\mathrm{d}s\mathrm{d}r\right|\rightarrow0,\,\,\,\text{as }n\rightarrow\infty,
\]
which completes the proof.

\end{proof}

Next, we find a very useful decomposition for $S_{m}^{\Delta}$. For
any $\Delta>0$, let 
\[
\mathcal{\mathcal{P}}_{A}^{\Delta}(i,j):=\{(r,s):a(t_{j+1}-s)<r\leq a(t_{j}-s),t_{i-1}<s\leq t_{i}\},
\]
where $t_{i}=t_{i}(\Delta)=i\Delta$ with the convention that $t_{-1}=-\infty$.
It is clear that $\mathcal{\mathcal{P}}_{A}^{\Delta}(i,j)\cap\mathcal{\mathcal{P}}_{A}^{\Delta}(i^{\prime},j^{\prime})$
whenever either $i\neq i^{\prime}$ or $j\neq j^{\prime}$ for $i=0,\ldots,m-1$
and $j=0,1\ldots,m-2$, $j\geq i$. Moreover,
\begin{equation}
Leb\left\{ A_{k\Delta}\setminus\bigcup_{i=0}^{k}\bigcup_{j=k}^{\infty}\mathcal{\mathcal{P}}_{A}^{\Delta}(i,j)\right\} =0,\label{partitionAn}
\end{equation}
and
\begin{equation}
Leb(\mathcal{\mathcal{P}}_{A}^{\Delta}(i,i+j))=\begin{cases}
\int_{t_{j}}^{t_{j+1}}a(s)\mathrm{d}s & \textrm{ if }i=0,j\geq0;\\
\int_{t_{j}}^{t_{j+1}}[a(s)-a(s+\Delta)]\mathrm{d}s & \textrm{ if }i=1,\ldots,m-2,j<m-1-i.
\end{cases}\label{lebpartition}
\end{equation}
Based on these observations, the following result is obvious.

\begin{lemma}Let $\chi_{i,j}^{\Delta}:=L(\mathcal{\mathcal{P}}_{A}^{\Delta}(i,j))-\mathbb{E}(L(\mathcal{\mathcal{P}}_{A}^{\Delta}(i,j))).$
Then, almost surely
\begin{align}
S_{m}^{\Delta} & =\frac{1}{\Delta}\sum_{i=0}^{m-1}\sum_{j=i}^{m-1}t_{j-i+1}\chi_{i,j}^{\Delta}+\frac{1}{\Delta}\sum_{i=0}^{m-1}(t_{m}-t_{i})\zeta_{i,m}^{\Delta}\label{eq:decompSn}\\
 & =S_{m}^{\Delta,1}+S_{m}^{\Delta,2}+S_{m}^{\Delta,3}+S_{m}^{\Delta,4},
\end{align}
where $\zeta_{i,m}^{\Delta}:=\sum_{j=m}^{\infty}\chi_{i,j}^{\Delta}$
and 
\begin{align*}
S_{m}^{\Delta,1} & :=\frac{1}{\Delta}\sum_{i=1}^{m-1}\sum_{j=1}^{m-i}t_{j}\chi_{i,j+i-1}^{\Delta};\,\,\,S_{m}^{\Delta,2}:=\frac{1}{\Delta}\sum_{i=1}^{m-1}(t_{m}-t_{i})\zeta_{i,m}^{\Delta};\\
S_{m}^{\Delta,3} & :=\frac{1}{\Delta}\sum_{j=0}^{m-1}t_{j+1}\chi_{0,j}^{\Delta};\,\,\,S_{m}^{\Delta,4}:=\frac{t_{m}}{\Delta}\zeta_{0,m}^{\Delta}.
\end{align*}
\end{lemma}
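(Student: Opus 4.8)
The plan is to read the decomposition directly off the cell structure $\{\mathcal{P}_A^\Delta(i,j)\}$, using only the almost sure $\sigma$-additivity of $L$ together with the monotonicity of $a$. First I would record the geometric fact underlying (\ref{partitionAn}): a point $(s,r)$ lying in the cell $\mathcal{P}_A^\Delta(i,j)$, i.e. with $t_{i-1}<s\leq t_i$ and $a(t_{j+1}-s)<r\leq a(t_j-s)$, belongs to a trawl set $A_{t_k}=\{(s,r):s\leq t_k,\,0\leq r\leq a(t_k-s)\}$ exactly when $s\leq t_k$ and $r\leq a(t_k-s)$. Since $a$ is non-increasing, the first condition holds iff $k\geq i$, and, comparing $r$ with the two bounding graphs $a(t_{j+1}-s)$ and $a(t_j-s)$, the second holds iff $k\leq j$; crucially, neither condition depends on the particular $(s,r)$ inside the cell, only on $(i,j,k)$. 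Hence, modulo a Lebesgue-null set, $\mathcal{P}_A^\Delta(i,j)\subseteq A_{t_k}$ if and only if $i\leq k\leq j$, which is precisely what (\ref{partitionAn}) encodes.

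Next, using that $L$ is $\sigma$-additive a.s. and that the cells are pairwise disjoint, (\ref{partitionAn}) yields, for each fixed $k$,
\[
X_{t_k}=L(A_{t_k})=\sum_{i=0}^{k}\sum_{j=k}^{\infty}L(\mathcal{P}_A^\Delta(i,j))\qquad\text{a.s.}
\]
I would then sum these finitely many identities over $k=0,\ldots,m-1$ and interchange the finite outer sum with the inner sums. By the membership criterion above, the mass $L(\mathcal{P}_A^\Delta(i,j))$ is counted once for each $k\in\{0,\ldots,m-1\}$ with $i\leq k\leq j$; its multiplicity is therefore $j-i+1=t_{j-i+1}/\Delta$ when $j\leq m-1$, and $m-i=(t_m-t_i)/\Delta$ when $j\geq m$ (where the constraint $k\leq j$ is no longer binding). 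Centering, i.e. replacing $L(\mathcal{P}_A^\Delta(i,j))$ by $\chi_{i,j}^\Delta$, and collecting the tail terms $j\geq m$ into $\zeta_{i,m}^\Delta=\sum_{j\geq m}\chi_{i,j}^\Delta$ produces exactly (\ref{eq:decompSn}). The identification of $S_m^{\Delta,1},\ldots,S_m^{\Delta,4}$ is then bookkeeping: in each of the two resulting sums I split off the strip $i=0$ (giving $S_m^{\Delta,3}$ and, using $t_0=0$, $S_m^{\Delta,4}$) from the strips $i\geq 1$ (giving $S_m^{\Delta,1}$ after the reindexing $j\mapsto j+i-1$, and $S_m^{\Delta,2}$).

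The only points needing care, and the closest thing to an obstacle, are the a.s. convergence and the legitimacy of rearranging the inner series over $j$. Both are handled by $\sigma$-additivity together with $\mathbb{E}(|L'|)<\infty$: for fixed $i$ the cells $\{\mathcal{P}_A^\Delta(i,j)\}_{j\geq m}$ are disjoint with union of finite Lebesgue measure (as $a$ is integrable), so $\sum_{j\geq m}L(\mathcal{P}_A^\Delta(i,j))=L(\bigcup_{j\geq m}\mathcal{P}_A^\Delta(i,j))$ converges a.s. and the centred tail $\zeta_{i,m}^\Delta$ is a.s. finite. Since the multiplicities are bounded by $m$ and are constant ($=m-i$) along the tail, the interchange of the finite sum over $k$ with the sum over $j$ is justified, making the stated decomposition rigorous and explaining why it is ``obvious'' once the cell picture is in place.
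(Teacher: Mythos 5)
Your proof is correct and is essentially the paper's own (implicit) argument: the paper states this lemma without proof, declaring it obvious from the partition identity (\ref{partitionAn}) and the cell measures (\ref{lebpartition}), and your write-up supplies exactly the details that make it so --- the membership criterion $\mathcal{P}_{A}^{\Delta}(i,j)\subseteq A_{t_{k}}$ if and only if $i\leq k\leq j$, the multiplicity count over $k=0,\dots,m-1$ giving $j-i+1$ for $j\leq m-1$ and $m-i$ for $j\geq m$, and $\sigma$-additivity of $L$ together with $\mathbb{E}(|L'|)<\infty$ and the integrability of $a$ to justify the a.s.\ convergence of the centred tails $\zeta_{i,m}^{\Delta}$. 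The concluding bookkeeping that splits off the $i=0$ strip and reindexes $j\mapsto j+i-1$ to identify $S_{m}^{\Delta,1},\dots,S_{m}^{\Delta,4}$ is also correct, so nothing is missing.
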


When $\beta=+\infty$, it turns out that in the short memory case
$S_{m}^{\Delta,1}$ dominates the asymptotics.

\begin{lemma}\label{leadingtermshortmemory}

Let $m_{n}\in\mathbb{N}$ be such that $m_{n}\uparrow\infty$, $\Delta_{n}m_{n}\rightarrow\infty$
and $\Delta_{n}\downarrow0$, as $n\rightarrow\infty$. Suppose that
$\mathbb{E}(\left|L^{\prime}\right|^{2})<\infty$ and that $\int_{0}^{\infty}\int_{r}^{\infty}a(s)\mathrm{d}s\mathrm{d}r<\infty$.
Then 
\[
S_{m_{n}}^{\Delta_{n}}=S_{m_{n}}^{\Delta_{n},1}+\mathrm{o}_{\mathbb{P}}\left(\sqrt{\frac{m_{n}}{\Delta_{n}}}\right).
\]

\end{lemma}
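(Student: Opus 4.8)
The plan is to show that in the decomposition (\ref{eq:decompSn}) the three blocks $S_{m_{n}}^{\Delta_{n},2}$, $S_{m_{n}}^{\Delta_{n},3}$ and $S_{m_{n}}^{\Delta_{n},4}$ are each negligible in $L^{2}$, so that only $S_{m_{n}}^{\Delta_{n},1}$ survives. The key structural observation is that, since $L$ is independently scattered and the sets $\mathcal{P}_{A}^{\Delta}(i,j)$ are pairwise disjoint, the variables $\chi_{i,j}^{\Delta}$ are independent and centered with $Var(\chi_{i,j}^{\Delta})=Leb(\mathcal{P}_{A}^{\Delta}(i,j))$ (recall the normalisation $Var(L')=1$). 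Because the four blocks $S_{m}^{\Delta,1},\dots,S_{m}^{\Delta,4}$ are assembled from $L$ restricted to disjoint index sets $(i,j)$, they are mutually independent; hence it suffices to bound the three variances $Var(S_{m}^{\Delta,\ell})$, $\ell=2,3,4$, separately and then invoke Chebyshev's inequality to deduce that their sum is $\mathrm{o}_{\mathbb{P}}(\sqrt{m_{n}/\Delta_{n}})$.

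Next I would compute the three variances explicitly using (\ref{lebpartition}). For $S_{m}^{\Delta,3}$ one gets, since $t_{j+1}/\Delta=j+1$, the expression $Var(S_{m}^{\Delta,3})=\sum_{j=0}^{m-1}(j+1)^{2}\int_{t_{j}}^{t_{j+1}}a(s)\mathrm{d}s$. For the tail variables $\zeta_{i,m}^{\Delta}$ with $i\geq1$, the strip measures telescope: writing the lag $\ell=j-i$ and using (\ref{lebpartition}) one finds $Var(\zeta_{i,m}^{\Delta})=\sum_{\ell\geq m-i}\int_{t_{\ell}}^{t_{\ell+1}}[a(s)-a(s+\Delta)]\mathrm{d}s=\int_{t_{m-i}}^{t_{m-i+1}}a(s)\mathrm{d}s$, whence $Var(S_{m}^{\Delta,2})=\sum_{k=1}^{m-1}k^{2}\int_{t_{k}}^{t_{k+1}}a(s)\mathrm{d}s$ after the change of index $k=m-i$. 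Finally, since $t_{m}/\Delta=m$ and $Var(\zeta_{0,m}^{\Delta})=\sum_{j\geq m}\int_{t_{j}}^{t_{j+1}}a=\varGamma_{X}(m\Delta)$, we obtain $Var(S_{m}^{\Delta,4})=m^{2}\varGamma_{X}(m\Delta)$.

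It remains to show these are all $\mathrm{o}(m_{n}/\Delta_{n})$, and this is the analytic heart of the argument. Observing that for $s\in[t_{k},t_{k+1}]$ one has $k\Delta\leq s$, the sums for $Var(S_{m}^{\Delta,2})$ and $Var(S_{m}^{\Delta,3})$ are both bounded (up to the harmless term $C\int_{0}^{\infty}a(s)\mathrm{d}s=\mathrm{O}(1)$) by $\Delta^{-2}\int_{0}^{m\Delta}s^{2}a(s)\mathrm{d}s$. Writing $T_{n}=m_{n}\Delta_{n}\to\infty$, the decisive claims are that $T^{-1}\int_{0}^{T}s^{2}a(s)\mathrm{d}s\to0$ and $T\varGamma_{X}(T)=T\int_{T}^{\infty}a(s)\mathrm{d}s\to0$ as $T\to\infty$. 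Both are consequences of the short-memory hypothesis, which by Fubini reads $\int_{0}^{\infty}\int_{r}^{\infty}a(s)\mathrm{d}s\,\mathrm{d}r=\int_{0}^{\infty}s\,a(s)\mathrm{d}s<\infty$: for the first claim I would split the integral at a large $M$, use that $\int_{M}^{\infty}s\,a(s)\mathrm{d}s$ is a small tail and that $s\leq T$ on $[M,T]$; for the second I would simply estimate $T\int_{T}^{\infty}a(s)\mathrm{d}s\leq\int_{T}^{\infty}s\,a(s)\mathrm{d}s\to0$. Plugging these in gives $Var(S_{m_{n}}^{\Delta_{n},2}),Var(S_{m_{n}}^{\Delta_{n},3})=\mathrm{o}(m_{n}/\Delta_{n})$ and $Var(S_{m_{n}}^{\Delta_{n},4})=(m_{n}/\Delta_{n})\cdot T_{n}\varGamma_{X}(T_{n})=\mathrm{o}(m_{n}/\Delta_{n})$.

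I expect the main obstacle to be the combination of the telescoping identity for $Var(\zeta_{i,m}^{\Delta})$, which must be carried out carefully so as not to mishandle the window constraints in (\ref{lebpartition}), together with the passage from mere integrability of $s\mapsto s\,a(s)$ to the Ces\`aro-type decay $T^{-1}\int_{0}^{T}s^{2}a(s)\mathrm{d}s\to0$; the rest is an application of independence and Chebyshev's inequality.
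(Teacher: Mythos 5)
Your proof is correct, and at the top level it follows the same strategy as the paper: starting from the decomposition (\ref{eq:decompSn}), show that $Var(S_{m_n}^{\Delta_n,\ell})=\mathrm{o}(m_n/\Delta_n)$ for $\ell=2,3,4$ using the independent-scattering property and (\ref{lebpartition}), then conclude by Chebyshev. (The mutual independence of the four blocks that you invoke is true, since they are built over disjoint families of sets, but it is not actually needed: three separate Chebyshev bounds already give that the sum of the three blocks is $\mathrm{o}_{\mathbb{P}}(\sqrt{m_n/\Delta_n})$.) Where you genuinely diverge is in the execution of the two supporting steps, and in both places your route is simpler. First, you derive exact closed forms for the variances, in particular the telescoping identity $Var(\zeta_{i,m}^{\Delta})=\int_{t_{m-i}}^{t_{m-i+1}}a(s)\,\mathrm{d}s$ and hence $Var(S_{m}^{\Delta,2})=\sum_{k=1}^{m-1}k^{2}\int_{t_{k}}^{t_{k+1}}a(s)\,\mathrm{d}s$, and then dominate everything by $C\Delta^{-2}\int_{0}^{m\Delta}s^{2}a(s)\,\mathrm{d}s+\mathrm{O}(1)$ via the pointwise bound $k\Delta\leq s$; the paper instead proves the approximation estimates (\ref{approxvarS2})--(\ref{approxvarS3}) with explicit error terms. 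Second, and more substantively, you bypass the paper's key analytic ingredient, Lemma \ref{shortmemorylimitatinf}: by Tonelli the hypothesis $\int_{0}^{\infty}\int_{r}^{\infty}a(s)\,\mathrm{d}s\,\mathrm{d}r<\infty$ is equivalent to $\int_{0}^{\infty}s\,a(s)\,\mathrm{d}s<\infty$, from which $T\int_{T}^{\infty}a(s)\,\mathrm{d}s\leq\int_{T}^{\infty}s\,a(s)\,\mathrm{d}s\rightarrow0$ is immediate, and $T^{-1}\int_{0}^{T}s^{2}a(s)\,\mathrm{d}s\rightarrow0$ follows by splitting the integral at a fixed large $M$. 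The paper proves these same two facts through integration by parts, an existence-of-limit plus contradiction argument, and L'Hospital's rule, and it needs continuity of the integrand; your argument uses only non-negativity and integrability of $s\,a(s)$, so it is shorter and marginally more general. Finally, the point you flagged as delicate, namely using (\ref{lebpartition}) for lags $j\geq m$ outside the range displayed in the paper, is indeed harmless: directly from the definition of $\mathcal{P}_{A}^{\Delta}(i,j)$ one has $Leb(\mathcal{P}_{A}^{\Delta}(i,j))=\int_{t_{i-1}}^{t_{i}}\left[a(t_{j}-s)-a(t_{j+1}-s)\right]\mathrm{d}s$ for all $i\geq1$ and $j\geq i$, which is all your telescoping requires.
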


The proof of Lemma \ref{leadingtermshortmemory} heavily relies on
the next property.

\begin{lemma}\label{shortmemorylimitatinf}Let $f\geq0$ be an integrable
continuous function such that $\int_{0}^{\infty}\int_{x}^{\infty}f(s)\mathrm{d}s\mathrm{d}x<\infty$.
Then, as $x\rightarrow+\infty$ 
\[
x\int_{x}^{\infty}f(s)\mathrm{d}s\rightarrow0,\,\,\text{and}\,\,\frac{1}{x}\int_{0}^{x}s^{2}f(s)\mathrm{d}s\rightarrow0.
\]

\end{lemma}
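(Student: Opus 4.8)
The plan is to first reformulate the hypothesis into a more convenient form. Since $f\geq0$, Tonelli's theorem gives
\[
\int_{0}^{\infty}\int_{x}^{\infty}f(s)\,\mathrm{d}s\,\mathrm{d}x=\int_{0}^{\infty}\Big(\int_{0}^{s}\mathrm{d}x\Big)f(s)\,\mathrm{d}s=\int_{0}^{\infty}sf(s)\,\mathrm{d}s,
\]
so the standing assumption is equivalent to the integrability of $h(s):=sf(s)$ on $[0,\infty)$. Both conclusions will then follow quickly from this single fact, using only that $h\geq0$ and that the tails $\int_{x}^{\infty}h(s)\,\mathrm{d}s$ vanish as $x\to\infty$.

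For the first limit I would simply bound, for $s\geq x$, the integrand by $xf(s)\leq sf(s)$, whence
\[
x\int_{x}^{\infty}f(s)\,\mathrm{d}s=\int_{x}^{\infty}xf(s)\,\mathrm{d}s\leq\int_{x}^{\infty}sf(s)\,\mathrm{d}s=\int_{x}^{\infty}h(s)\,\mathrm{d}s\longrightarrow0,
\]
the right-hand side being the tail of the convergent integral $\int_{0}^{\infty}h$. As the left-hand side is non-negative, this yields $x\int_{x}^{\infty}f(s)\,\mathrm{d}s\to0$.

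For the second limit I would write $s^{2}f(s)=s\,h(s)$ and proceed by the usual Ces\`aro-type splitting. Fix $\varepsilon>0$ and choose $M$ so large that $\int_{M}^{\infty}h(s)\,\mathrm{d}s<\varepsilon$. For $x>M$, split the average at $M$: the contribution of $[0,M]$ is bounded by $\frac{M}{x}\int_{0}^{M}h(s)\,\mathrm{d}s$, which tends to $0$ as $x\to\infty$ because the numerator is a fixed finite constant (here I use the continuity of $h$ to guarantee $\int_{0}^{M}h<\infty$); the contribution of $[M,x]$ is bounded, using $s\leq x$, by $\frac{1}{x}\int_{M}^{x}x\,h(s)\,\mathrm{d}s=\int_{M}^{x}h(s)\,\mathrm{d}s\leq\varepsilon$. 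Hence $\limsup_{x\to\infty}\frac{1}{x}\int_{0}^{x}s^{2}f(s)\,\mathrm{d}s\leq\varepsilon$ for every $\varepsilon>0$, giving the claim.

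There is no serious obstacle here; the only point requiring care is the reformulation via Tonelli, after which both statements reduce to elementary tail and averaging estimates. An alternative route for the second limit, should one wish to avoid the splitting, is integration by parts: with $F(x):=\int_{x}^{\infty}f(s)\,\mathrm{d}s$ one obtains $\int_{0}^{x}s^{2}f(s)\,\mathrm{d}s=-x^{2}F(x)+2\int_{0}^{x}sF(s)\,\mathrm{d}s$, so after dividing by $x$ the first term is $-xF(x)\to0$ by the first part and the second is the Ces\`aro mean of $sF(s)\to0$; the direct splitting above is nonetheless shorter and fully self-contained.
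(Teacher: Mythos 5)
Your proof is correct, and it takes a genuinely different and more streamlined route than the paper's. The paper works throughout with the tail function $F(x)=\int_{x}^{\infty}f(s)\,\mathrm{d}s$: for the first limit it integrates by parts to show that $\ell=\lim_{x\to\infty}xF(x)$ exists and then rules out $\ell>0$ by contradiction (since $F(x)\geq C/x$ eventually would contradict $\int_{0}^{\infty}F<\infty$); for the second limit it splits into two cases according to whether $\int_{0}^{\infty}\int_{y}^{\infty}\int_{x}^{\infty}f(s)\,\mathrm{d}s\,\mathrm{d}x\,\mathrm{d}y$ is finite, using integration by parts again and L'Hospital's rule in the divergent case. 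Your single Tonelli reformulation, $\int_{0}^{\infty}\int_{x}^{\infty}f(s)\,\mathrm{d}s\,\mathrm{d}x=\int_{0}^{\infty}sf(s)\,\mathrm{d}s$, makes the hypothesis equivalent to integrability of $h(s)=sf(s)$, after which the first limit is a one-line tail bound ($xf(s)\leq sf(s)$ on $[x,\infty)$) and the second is a standard Ces\`aro splitting; you avoid the existence-of-limit step, the contradiction argument, the case distinction, and L'Hospital entirely. Your argument also needs strictly weaker hypotheses: neither the positivity reduction ($f>0$) nor the continuity of $f$ plays any role (indeed, even your one appeal to continuity, to bound $\int_{0}^{M}h$, is redundant, since integrability of $h$ already gives that). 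The only thing the paper's route buys is that its integration-by-parts identity for $\frac{1}{x}\int_{0}^{x}s^{2}f(s)\,\mathrm{d}s$ — which you note as an alternative — is of the same form as estimates reused elsewhere in the proofs of the surrounding lemmas, but as a proof of this lemma in isolation, yours is cleaner.
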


\begin{proof}If $f\equiv0$ a.e. the result is trivial, so assume
that $f>0$. For $x\geq0$, put $F(x):=\int_{x}^{\infty}f(s)\mathrm{d}s$.
Integration by parts gives that 
\[
\int_{0}^{x}F(s)\mathrm{d}s=xF(x)+\int_{0}^{x}sf(s)\mathrm{d}s.
\]
In view that $f>0$ and $\int_{0}^{\infty}F(s)ds<\infty$, the Dominated
Convergence Theorem guarantees that 
\[
0\leq\lim_{x\rightarrow\infty}\int_{0}^{x}sf(s)\mathrm{d}s=\int_{0}^{\infty}sf(s)\mathrm{d}s\leq\int_{0}^{\infty}F(s)\mathrm{d}s<\infty.
\]
This shows in particular that the following limit exists 
\[
\infty>\ell=\lim_{x\rightarrow\infty}xF(x)=\int_{0}^{\infty}F(s)\mathrm{d}s-\int_{0}^{\infty}sf(s)\mathrm{d}s\geq0.
\]
Observe that if $\ell>0$, then, as $x\rightarrow+\infty$, $1/\left(x\int_{x}^{\infty}f(s)ds\right)\rightarrow1/\ell$.
Thus, if $\ell>0$, we could find $x_{0}>0$ such that for all $x>x_{0}$
\[
\frac{1}{x}<C\int_{x}^{\infty}f(s)\mathrm{d}s,
\]
which contradicts that $\int_{0}^{\infty}\int_{x}^{\infty}f(s)\mathrm{d}s\mathrm{d}x<\infty$.
Hence, $\ell=0$ as required. 

To show the last part, observe first that when $\int_{0}^{\infty}\int_{y}^{\infty}\int_{x}^{\infty}f(s)\mathrm{d}s\mathrm{d}x\mathrm{d}y<\infty$,
an analogous argument as above shows that 
\[
0\leq\int_{0}^{\infty}sF(s)\mathrm{d}s\leq\int_{0}^{\infty}\int_{s}^{\infty}F(x)\mathrm{d}x\mathrm{d}s<\infty.
\]
Therefore, from the first part of the proof, as $x\rightarrow+\infty$
\begin{align*}
\frac{1}{x}\int_{0}^{x}s^{2}f(s)ds & =-xF(x)+\frac{2}{x}\int_{0}^{x}F(s)sds\rightarrow0.
\end{align*}
Now suppose that $\int_{0}^{\infty}\int_{y}^{\infty}\int_{x}^{\infty}f(s)\mathrm{d}s\mathrm{d}x\mathrm{d}y=+\infty$
and put $\bar{F}(x):=\int_{0}^{x}F(s)\mathrm{d}s$. Clearly as $\int_{0}^{x}\bar{F}(s)\mathrm{d}s\rightarrow+\infty$
$x\rightarrow+\infty$, and for all $x\geq0$ 
\begin{equation}
\frac{1}{x}\int_{0}^{x}s^{2}f(s)ds=-xF(x)+2\left[\bar{F}(x)-\frac{1}{x}\int_{0}^{x}\bar{F}(s)ds\right].\label{intbypartslemmashort}
\end{equation}
Moreover, by L'Hospital's Rule and the continuity of $f$ we have
that
\[
\frac{1}{x}\int_{0}^{x}\bar{F}(s)\mathrm{d}s\rightarrow\int_{0}^{\infty}F(s)\mathrm{d}s,
\]
which applied to (\ref{intbypartslemmashort}) concludes the proof.\end{proof}

\begin{proof}[Proof of Lemma \ref{leadingtermshortmemory}]Since $L$
is independently scattered, we get by (\ref{lebpartition}) that for
any $m\in\mathbb{N}$ and $\Delta>0$

\begin{align*}
Var(S_{m}^{\Delta,2}) & =\frac{1}{\Delta^{2}}\sum_{i=1}^{m-1}\int_{t_{i-1}}^{t_{i}}(t_{m-1}-t_{i-1})^{2}a(t_{m-1}-s)\mathrm{d}s;\\
Var(S_{m}^{\Delta,3}) & =\frac{1}{\Delta^{2}}\int_{0}^{\Delta m}s^{2}a(s)\mathrm{d}s+\frac{2}{\Delta^{2}}\sum_{j=0}^{m-1}\int_{t_{j}}^{t_{j+1}}\int_{s}^{t_{j+1}}r\mathrm{d}ra(s)\mathrm{d}s;\\
Var(S_{m}^{\Delta,4}) & =m^{2}\int_{\Delta m}^{\infty}a(s)ds.
\end{align*}
Moreover, in view that the trawl function is non-negative continuous
and such that $\int_{0}^{\infty}\int_{x}^{\infty}a(s)\mathrm{d}s\mathrm{d}x<\infty$,
Lemma \ref{shortmemorylimitatinf} can be applied in order to obtain
that
\[
\frac{\Delta_{n}}{m_{n}}Var(S_{m_{n}}^{\Delta_{n},4})=\Delta_{n}m_{n}\int_{\Delta_{n}m_{n}}^{\infty}a(s)ds\rightarrow0.
\]
We proceed now to show that for every $m\in\mathbb{N}$ and $\Delta>0$

\begin{align}
\left|Var(S_{m}^{\Delta,2})-\frac{1}{\Delta^{2}}\int_{0}^{\Delta(m-1)}s^{2}a(s)\mathrm{d}s\right| & \leq C\frac{1}{\Delta}\int_{0}^{\Delta m}sa(s)\mathrm{d}s+\mathrm{O}(1);\label{approxvarS2}\\
\left|Var(S_{m}^{\Delta,3})-\frac{1}{\Delta^{2}}\int_{0}^{\Delta m}s^{2}a(s)\mathrm{d}s\right| & \leq C\frac{1}{\Delta}\int_{0}^{\Delta m}sa(s)\mathrm{d}s+\mathrm{O}(1).\label{approxvarS3}
\end{align}
Let $R(m,\Delta)=\sum_{i=1}^{m-1}\int_{t_{i-1}}^{t_{i}}(t_{m-1}-t_{i-1})^{2}a(t_{m-1}-s)\mathrm{d}s$
and $R^{\prime}(m,\Delta)=\sum_{j=0}^{m-1}\int_{t_{j}}^{t_{j+1}}\int_{s}^{t_{j+1}}r\mathrm{d}ra(s)\mathrm{d}s$.
Then,
\begin{align*}
\left|R(m,\Delta)-\int_{0}^{\Delta(m-1)}(t_{m-1}-s)^{2}a(t_{m-1}-s)\mathrm{d}s\right| & \leq C\Delta\sum_{i=1}^{m-1}\int_{t_{i-1}}^{t_{i}}(t_{m-1}-t_{i-1})a(t_{m-1}-s)\mathrm{d}s\\
 & \leq C\Delta\int_{0}^{\Delta m}sa(s)\mathrm{d}s+C\Delta^{2}\int_{0}^{\Delta m}a(s)\mathrm{d}s,
\end{align*}
which is exactly (\ref{approxvarS2}). In a similar way, we see that
\begin{align*}
\left|R^{\prime}(m,\Delta)\right| & \leq2\Delta\sum_{j=0}^{m-1}\int_{t_{j}}^{t_{j+1}}t_{j+1}a(s)\mathrm{d}s\\
 & \leq2\Delta\int_{0}^{\Delta m}sa(s)\mathrm{d}s+2\Delta^{2}\int_{0}^{\Delta m}a(s)\mathrm{d}s.
\end{align*}
Relation (\ref{approxvarS3}) now follows easily from this. Finally,
note that from (\ref{approxvarS2}), (\ref{approxvarS3}) and Lemma
\ref{shortmemorylimitatinf}, it follows that for $l=2,3$
\begin{align*}
\frac{\Delta_{n}}{m_{n}}Var(S_{m_{n}}^{\Delta_{n},l}) & =\frac{1}{m_{n}\Delta_{n}}\int_{0}^{\Delta_{n}m_{n}}s^{2}a(s)\mathrm{d}s+\mathrm{o}(1)\rightarrow0,\,\,\,n\rightarrow\infty,
\end{align*}
completing this the proof.\end{proof}

We proceed now to find some estimates for the characteristic function
of $S_{m}^{\Delta,l}$, for $l=3,\ldots,4$. For doing this, the following
result is essential and its proof follows the lines of the proof of
Proposition 3.6 in \cite{RajputRosinski89} as well as the well-known
inequality 
\[
\left|e^{\mathbf{i}zx}-1\right|\leq2\left(\left|zx\right|\mathbf{1}_{\left|zx\right|\leq1}+\mathbf{1}_{\left|zx\right|>1}\right).
\]

\begin{lemma}\label{lemmaaproxZ}

Let $\psi$ the characteristic exponent of an ID distribution with
mean $0$. Then $\psi$ is continuously differentiable and there is
a constant $C>0$ depending only on $(\gamma,b,\nu)$ such that 
\begin{equation}
\left|\psi(z)\right|\leq b^{2}\left|z\right|^{2}+C\int_{\mathbb{R}}(1\land\left|xz\right|^{2})\nu(\mathrm{d}x),\,\,\,z\in\mathbb{R};\label{bounderchexp-1}
\end{equation}
\begin{equation}
\left|\psi^{\prime}(z)\right|\leq b^{2}\left|z\right|+C\int_{\mathbb{R}}(1\land\left|xz\right|)\left|x\right|\nu(\mathrm{d}x),\,\,\,z\in\mathbb{R}.\label{bounderchexp}
\end{equation}

\end{lemma}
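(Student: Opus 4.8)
The plan is to use the mean-zero hypothesis to recast $\psi$ in its fully centred form, after which both estimates reduce to elementary pointwise inequalities on the integrand and its $z$-derivative. Since the underlying ID law has finite first moment equal to zero, we have $\int_{|x|>1}|x|\,\nu(\mathrm{d}x)<\infty$ and $\gamma=-\int_{|x|>1}x\,\nu(\mathrm{d}x)$. Absorbing the drift into the compensator of the large jumps, the two terms $\mathbf{i}\gamma z$ and $\mathbf{i}z\int_{|x|>1}x\,\nu(\mathrm{d}x)$ cancel, giving
\[
\psi(z)=-\tfrac12 b^{2}z^{2}+\int_{\mathbb{R}}\bigl(e^{\mathbf{i}zx}-1-\mathbf{i}zx\bigr)\,\nu(\mathrm{d}x),\qquad z\in\mathbb{R}.
\]
The absolute convergence of this integral must be checked: on $\{|x|\le1\}$ the integrand is $\mathrm{O}(|xz|^{2})$ and on $\{|x|>1\}$ it is $\mathrm{O}(1+|xz|)$, both $\nu$-integrable by the small-jump condition $\int_{|x|\le1}x^{2}\nu(\mathrm{d}x)<\infty$ and the finite-mean condition $\int_{|x|>1}|x|\,\nu(\mathrm{d}x)<\infty$, respectively.

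Next I would establish continuous differentiability together with the explicit formula for $\psi'$. Differentiating the integrand in $z$ yields $\mathbf{i}x(e^{\mathbf{i}zx}-1)$, and on each compact set $\{|z|\le Z\}$ this is dominated by the $z$-independent function $|x|\min(Z|x|,2)$. Its $\nu$-integrability again splits at $|x|=1$: over $\{|x|\le1\}$ it is bounded by $Z x^{2}$, and over $\{|x|>1\}$ by $2|x|$, so each piece is controlled by the appropriate moment. Differentiation under the integral sign is therefore legitimate and gives
\[
\psi'(z)=-b^{2}z+\int_{\mathbb{R}}\mathbf{i}x\bigl(e^{\mathbf{i}zx}-1\bigr)\,\nu(\mathrm{d}x),
\]
with $\psi'(0)=0$ consistent with the vanishing mean; continuity of $\psi'$ follows from the same domination via dominated convergence.

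The two bounds are then read off from the elementary inequalities $|e^{\mathbf{i}\theta}-1|\le\min(|\theta|,2)\le 2(1\wedge|\theta|)$ and $|e^{\mathbf{i}\theta}-1-\mathbf{i}\theta|\le\min(\tfrac12\theta^{2},2|\theta|)$, which is precisely the analytic device underlying Proposition~3.6 in \cite{RajputRosinski89} and the quoted bound on $|e^{\mathbf{i}zx}-1|$. For the derivative, $|\mathbf{i}x(e^{\mathbf{i}zx}-1)|=|x|\,|e^{\mathbf{i}zx}-1|\le 2|x|(1\wedge|xz|)$, so
\[
|\psi'(z)|\le b^{2}|z|+2\int_{\mathbb{R}}(1\wedge|xz|)\,|x|\,\nu(\mathrm{d}x),
\]
which is the second estimate with $C=2$. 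For $\psi$ itself, $|{-\tfrac12}b^{2}z^{2}|\le b^{2}|z|^{2}$ disposes of the Gaussian part, while $|e^{\mathbf{i}zx}-1-\mathbf{i}zx|\le\min(\tfrac12|xz|^{2},2|xz|)$ controls the centred integrand and delivers the first estimate.

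The main obstacle is not the final bounding step, which is routine once the pointwise inequalities are available, but the justification that the centred representation is valid and that one may differentiate under the integral while retaining continuity of $\psi'$; both rest squarely on the integrability $\int_{|x|>1}|x|\,\nu(\mathrm{d}x)<\infty$ extracted from the mean-zero hypothesis. The care needed is to split every dominating function at $|x|=1$, so that the small-jump ($x^{2}$) and large-jump ($|x|$) contributions are matched to the correct moment condition; this is the one place where the argument would fail for a general (non-centred) ID law.
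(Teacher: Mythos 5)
Your treatment of the differentiability and of the derivative bound (\ref{bounderchexp}) is correct, and it supplies exactly the details the paper omits: the paper offers no written proof, only a pointer to Proposition 3.6 of \cite{RajputRosinski89} and the inequality $\left|e^{\mathbf{i}zx}-1\right|\leq2\left(\left|zx\right|\mathbf{1}_{\left|zx\right|\leq1}+\mathbf{1}_{\left|zx\right|>1}\right)$. Your centred representation $\psi(z)=-\tfrac12 b^{2}z^{2}+\int_{\mathbb{R}}(e^{\mathbf{i}zx}-1-\mathbf{i}zx)\,\nu(\mathrm{d}x)$ (legitimate, since mean zero forces $\int_{|x|>1}|x|\,\nu(\mathrm{d}x)<\infty$ and $\gamma=-\int_{|x|>1}x\,\nu(\mathrm{d}x)$), the domination argument justifying differentiation under the integral, and the bound $|x|\,|e^{\mathbf{i}zx}-1|\leq 2|x|(1\wedge|xz|)$ give (\ref{bounderchexp}) with $C=2$, in the same spirit as the cited reference.

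However, your final step for (\ref{bounderchexp-1}) is a non sequitur, and the gap cannot be closed. From $|e^{\mathbf{i}\theta}-1-\mathbf{i}\theta|\leq\min(\tfrac12\theta^{2},2|\theta|)$ one cannot deduce a bound by $C(1\wedge\theta^{2})$: for large $|\theta|$ the left-hand side is of order $|\theta|$ (indeed $|e^{\mathbf{i}\theta}-1-\mathbf{i}\theta|\geq|\theta|-2$), while $C(1\wedge\theta^{2})=C$ stays bounded. In fact (\ref{bounderchexp-1}) is false as printed: take $b=0$, $\nu=\delta_{1}$ and $\gamma=0$ (a centred Poisson law, which has mean zero); then $\psi(z)=e^{\mathbf{i}z}-1-\mathbf{i}z$, so $|\psi(2\pi n)|=2\pi n\rightarrow\infty$, whereas the right-hand side of (\ref{bounderchexp-1}) equals the constant $C$ for all $|z|\geq1$. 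What your pointwise bound actually proves — equivalently, what one obtains by integrating (\ref{bounderchexp}) over $[0,z]$ — is
\[
|\psi(z)|\leq b^{2}|z|^{2}+2\int_{\mathbb{R}}\bigl(|xz|\wedge|xz|^{2}\bigr)\,\nu(\mathrm{d}x)
=b^{2}|z|^{2}+2\int_{\mathbb{R}}(1\wedge|xz|)\,|xz|\,\nu(\mathrm{d}x),
\]
i.e.\ (\ref{bounderchexp-1}) with $(1\wedge|xz|^{2})$ replaced by $(|xz|\wedge|xz|^{2})$. (The printed form does hold when $\nu$ is symmetric, since then the centred integrand is $\cos(zx)-1$, but not under the mean-zero hypothesis alone.) This amended inequality is presumably what was intended, and it is sufficient for the paper's later uses such as (\ref{bounchfnctsmallgethoor}) and (\ref{boundchfncgethoorbig}), where the exponents can be taken $\geq1$ so that the linear growth on $\{|xz|>1\}$ is harmless. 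You should prove the corrected statement and flag the misprint, rather than assert that your (correct) pointwise estimate yields the stated display.
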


\begin{lemma}\label{lemmaapproxchf}Suppose that $\mathbb{E}\left(\left|L^{\prime}\right|^{2}\right)<\infty$
and let
\begin{align*}
I_{m}^{\Delta,1}(z) & :=\int_{0}^{\Delta m}(\Delta m-s)\psi\left(\frac{s}{\Delta}z\right)\left[\frac{a(s)-a(s+\Delta)}{\Delta}\right]\mathrm{d}s;\\
I_{m}^{\Delta,2}(z) & :=\int_{0}^{\Delta m}\psi\left(\frac{s}{\Delta}z\right)a(s)\mathrm{d}s.
\end{align*}
Then the following estimates hold
\begin{align*}
\left|\mathcal{C}\left(z\ddagger S_{n}^{\Delta_{n},1}\right)-I_{m}^{\Delta,1}(z)\right| & \leq C\left|z\right|^{2}\left(m+\frac{1}{\Delta}\right)\int_{0}^{t_{m}}s\left[\frac{a(s)-a(s+\Delta)}{\Delta}\right]\mathrm{d}s\\
 & +C\Delta\int_{0}^{\Delta m}\left|\psi\left(\frac{s}{\Delta}z\right)\right|\left[\frac{a(s)-a(s+\Delta)}{\Delta}\right]\mathrm{d}s;\\
\left|\mathcal{C}\left(z\ddagger S_{n}^{\Delta_{n},2}\right)-I_{m}^{\Delta,2}(z)\right|+\left|\mathcal{C}\left(z\ddagger S_{n}^{\Delta_{n},3}\right)-I_{m}^{\Delta,3}(z)\right| & \leq C\frac{\left|z\right|^{2}}{\Delta}\int_{0}^{\Delta m}sa(s)\mathrm{d}s.
\end{align*}

\end{lemma}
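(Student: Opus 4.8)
The plan is to reduce everything to an exact cumulant computation using independence, and then read off the claimed inequalities as bounds on a Riemann--sum discretisation error. The sets $\mathcal{P}_{A}^{\Delta}(i,j)$ are pairwise disjoint, so by the independently scattered property of $L$ the centred blocks $\chi_{i,j}^{\Delta}$ are independent, as are the tail variables $\zeta_{i,m}^{\Delta}$ (their supports being disjoint across $i$). Since we have normalised $\mathbb{E}(L^{\prime})=0$, each block satisfies $\mathcal{C}(w\ddagger\chi_{i,j}^{\Delta})=Leb(\mathcal{P}_{A}^{\Delta}(i,j))\psi(w)$, and summing the areas in (\ref{lebpartition}) gives $\mathcal{C}(w\ddagger\zeta_{i,m}^{\Delta})=\big(\int_{t_{m-i}}^{t_{m-i+1}}a(s)\mathrm{d}s\big)\psi(w)$. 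Using $\mathcal{C}(z\ddagger c\xi)=\mathcal{C}(cz\ddagger\xi)$ and additivity of cumulants over independent summands, I would compute from (\ref{eq:decompSn}) and (\ref{lebpartition}) the exact expressions
\begin{align*}
\mathcal{C}\big(z\ddagger S_{m}^{\Delta,1}\big) & =\frac{1}{\Delta}\sum_{j=1}^{m-1}(t_{m}-t_{j})\psi\Big(\tfrac{t_{j}}{\Delta}z\Big)\int_{t_{j-1}}^{t_{j}}[a(s)-a(s+\Delta)]\mathrm{d}s,\\
\mathcal{C}\big(z\ddagger S_{m}^{\Delta,2}\big) & =\sum_{k=1}^{m-1}\psi\Big(\tfrac{t_{k}}{\Delta}z\Big)\int_{t_{k}}^{t_{k+1}}a(s)\mathrm{d}s,\qquad\mathcal{C}\big(z\ddagger S_{m}^{\Delta,3}\big)=\sum_{j=0}^{m-1}\psi\Big(\tfrac{t_{j+1}}{\Delta}z\Big)\int_{t_{j}}^{t_{j+1}}a(s)\mathrm{d}s,
\end{align*}
after relabelling $k=m-i$ in the $S_{m}^{\Delta,2}$ term. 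The point is that each of these is a Riemann sum over the grid $t_{j}=j\Delta$ of the corresponding target integral ($I_{m}^{\Delta,1}$ for the first, $I_{m}^{\Delta,2}$ for the other two), so the asserted inequalities are bounds on the discretisation error.

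For the first estimate I would write $\mathcal{C}(z\ddagger S_{m}^{\Delta,1})-I_{m}^{\Delta,1}(z)$ as $\frac{1}{\Delta}\sum_{j}\int_{t_{j-1}}^{t_{j}}[\cdots]\,[a(s)-a(s+\Delta)]\mathrm{d}s$ (the leftover interval $[t_{m-1},t_{m}]$ of $I_{m}^{\Delta,1}$ being a boundary term) and split the integrand increment algebraically as
\begin{equation*}
(t_{m}-t_{j})\psi\Big(\tfrac{t_{j}}{\Delta}z\Big)-(t_{m}-s)\psi\Big(\tfrac{s}{\Delta}z\Big)=(t_{m}-t_{j})\Big[\psi\Big(\tfrac{t_{j}}{\Delta}z\Big)-\psi\Big(\tfrac{s}{\Delta}z\Big)\Big]+(s-t_{j})\psi\Big(\tfrac{s}{\Delta}z\Big).
\end{equation*}
Since $|s-t_{j}|\le\Delta$ on $[t_{j-1},t_{j}]$, the second summand integrates to at most $\int_{0}^{\Delta m}|\psi(\tfrac{s}{\Delta}z)|[a(s)-a(s+\Delta)]\mathrm{d}s=\Delta\int_{0}^{\Delta m}|\psi(\tfrac{s}{\Delta}z)|\tfrac{a(s)-a(s+\Delta)}{\Delta}\mathrm{d}s$, which is exactly the last term of the claim. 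For the first summand I would apply the mean value theorem, $\psi(\tfrac{t_{j}}{\Delta}z)-\psi(\tfrac{s}{\Delta}z)=\psi^{\prime}(\xi)\tfrac{(t_{j}-s)}{\Delta}z$ with $|\xi|\le\tfrac{t_{j}}{\Delta}|z|$, and invoke the derivative bound (\ref{bounderchexp}) of Lemma \ref{lemmaaproxZ}.

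The crucial input is that $\mathbb{E}(|L^{\prime}|^{2})<\infty$ forces $\int_{\mathbb{R}}x^{2}\nu(\mathrm{d}x)<\infty$, whence $(1\wedge|x\xi|)|x|\le|\xi|x^{2}$ turns (\ref{bounderchexp}) into the linear growth bound $|\psi^{\prime}(w)|\le C|w|$; this is what produces the clean $|z|^{2}$. Indeed $|\psi^{\prime}(\xi)|\le C\tfrac{t_{j}}{\Delta}|z|$, so the first summand is at most $C\,t_{m}\tfrac{t_{j}}{\Delta}|z|^{2}\le Cm(s+\Delta)|z|^{2}$ after using $t_{m}-t_{j}\le t_{m}=m\Delta$ and $t_{j}\le s+\Delta$; integrated against $\tfrac{1}{\Delta}[a(s)-a(s+\Delta)]$ this gives the $m$-part of the first term, with the residual $\int_{0}^{t_{m}}[a(s)-a(s+\Delta)]\mathrm{d}s$ absorbed into the companion $\tfrac{1}{\Delta}$ factor, yielding $C|z|^{2}(m+\tfrac{1}{\Delta})\int_{0}^{t_{m}}s\tfrac{a(s)-a(s+\Delta)}{\Delta}\mathrm{d}s$. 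The estimates for $S_{m}^{\Delta,2}$ and $S_{m}^{\Delta,3}$ follow the same template but are simpler, since there the discrete weights $t_{k}$, $t_{j+1}$ enter only through $\psi$; a single application of the mean value theorem with $|\psi^{\prime}(w)|\le C|w|$ directly produces $C\tfrac{|z|^{2}}{\Delta}\int_{0}^{\Delta m}sa(s)\mathrm{d}s$, the residual $\int_{0}^{\Delta m}a(s)\mathrm{d}s$ and the $k=0$/boundary contributions being of lower order. I expect the main difficulty to be precisely this error control: because $\Delta\downarrow0$ the arguments $\tfrac{s}{\Delta}z$ are unbounded, so no global Lipschitz constant for $\psi$ is available and one must route every bound through the local estimate (\ref{bounderchexp}), while carefully tracking which portion of the discretisation error is charged to which of the two terms on the right-hand side.
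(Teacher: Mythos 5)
Your overall route is the same as the paper's: compute the cumulants of $S_{m}^{\Delta,l}$ exactly from the independently scattered property together with (\ref{lebpartition}), then compare with $I_{m}^{\Delta,1}$, $I_{m}^{\Delta,2}$ via the mean value theorem, using that $\mathbb{E}(|L'|^{2})<\infty$ upgrades (\ref{bounderchexp}) to the linear-growth bound $|\psi'(w)|\leq C|w|$. Moreover, your exact formulas are the correct ones: checking $m=2$ directly from (\ref{eq:decompSn}) gives $\mathcal{C}(z\ddagger S_{2}^{\Delta,1})=\psi(z)\int_{0}^{\Delta}[a(s)-a(s+\Delta)]\mathrm{d}s$, which your expression reproduces, whereas the paper's (\ref{chfncS1}) pairs the weight $\psi(t_{j}z/\Delta)$ with the cell $[t_{j},t_{j+1}]$ and starts at $j=1$, yielding $\psi(z)\int_{\Delta}^{2\Delta}[a(s)-a(s+\Delta)]\mathrm{d}s$ instead; similarly (\ref{chfncS3}) drops the $j=0$ cell. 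So your cumulant computation is, if anything, more accurate than the paper's.

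The genuine gap is the step where you say the residual $\int_{0}^{t_{m}}[a(s)-a(s+\Delta)]\mathrm{d}s$ is ``absorbed into the companion $1/\Delta$ factor'', and likewise where you declare the boundary contributions for $S^{\Delta,2}$, $S^{\Delta,3}$ ``of lower order''. The pointwise absorption $m[a(s)-a(s+\Delta)]\leq C(m+\tfrac{1}{\Delta})\,s\,\tfrac{a(s)-a(s+\Delta)}{\Delta}$ is valid only for $s\geq\Delta$; on the first cell $[0,\Delta]$ your (correct) formula carries the weight $(t_{m}-t_{1})\psi(z)/\Delta$, and the mean value theorem only gives $|\psi(z)-\psi(sz/\Delta)|\leq C|z|^{2}$ there, so that cell contributes a term of size $m\int_{0}^{\Delta}[a(s)-a(s+\Delta)]\mathrm{d}s$ with no smallness in $s$. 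This is not reparable bookkeeping: take $a(s)=\max(0,1-s/\epsilon)$ with $0<\epsilon\ll\Delta$ (continuous, non-increasing, integrable, hence admissible), $m\geq2$, and any non-degenerate centred seed, so $\psi(z)\neq0$ for small $z\neq0$. Then $a(s)-a(s+\Delta)=a(s)$, your formula gives $\mathcal{C}(z\ddagger S_{m}^{\Delta,1})=(m-1)\psi(z)\epsilon/2$, while $|I_{m}^{\Delta,1}(z)|\leq Cm|z|^{2}\epsilon^{3}/\Delta^{2}$ and the entire claimed right-hand side is $O(|z|^{2}(m+1/\Delta)\epsilon^{2}/\Delta+|z|^{2}\epsilon^{3}/\Delta^{2})$; letting $\epsilon\downarrow0$, the left side of the claimed estimate is of order $\epsilon$ and the right side of order $\epsilon^{2}$, so the estimate fails (the same happens for the $j=0$ cell of $S^{\Delta,3}$, which contributes $\psi(z)\int_{0}^{\Delta}a(s)\mathrm{d}s$). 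In other words, paired with the true cumulants the inequality requires $s+\Delta$ in place of $s$ in the first integral (equivalently, additional boundary terms such as $Cm|z|^{2}\int_{0}^{\Delta}[a(s)-a(s+\Delta)]\mathrm{d}s$ and $C|z|^{2}\int_{0}^{\Delta}a(s)\mathrm{d}s$); the paper's own proof never meets this obstruction precisely because its shifted formulas (\ref{chfncS1}), (\ref{chfncS3}) omit the offending cell. You should therefore prove the corrected statement — which is harmless for the subsequent applications, since under Assumption \ref{assumptionlongmem-1} and in Proposition \ref{porpndeltnzero} these boundary terms are negligible after scaling — rather than assert an absorption that is false in general.
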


\begin{proof}Recall that we assume that $L$ is centered. By the
independent scattered property of $L$ it follows from 
\begin{align}
\mathcal{C}\left(z\ddagger S_{n}^{\Delta_{n},1}\right)= & \sum_{j=1}^{m-1}\int_{t_{j}}^{t_{j+1}}(t_{m}-s)\psi\left(\frac{t_{j}}{\Delta}z\right)\left[\frac{a(s)-a(s+\Delta)}{\Delta}\right]\mathrm{d}s\label{chfncS1}\\
 & +\sum_{j=1}^{m-1}\int_{t_{j}}^{t_{j+1}}(s-t_{j})\psi\left(\frac{t_{j}}{\Delta}z\right)\left[\frac{a(s)-a(s+\Delta)}{\Delta}\right]\mathrm{d}s\nonumber; \\
\mathcal{C}\left(z\ddagger S_{n}^{\Delta_{n},2}\right) & =\sum_{i=1}^{m-1}\int_{t_{i-1}}^{t_{i}}\psi\left(\frac{t_{m}-t_{i}}{\Delta}z\right)a(t_{m}-s)\mathrm{d}s;\label{chfncS2}\\
\mathcal{C}\left(z\ddagger S_{n}^{\Delta_{n},3}\right) & =\sum_{j=1}^{m-1}\int_{t_{j}}^{t_{j+1}}\psi\left(\frac{t_{j+1}}{\Delta}z\right)a(s)\mathrm{d}s.\label{chfncS3}
\end{align}
The claimed estimates are easily obtained by noting that from Lemma
\ref{lemmaapproxchf} and the Mean Value Theorem
\[
\left|\psi\left(\frac{t_{j}}{\Delta}z\right)-\psi\left(\frac{s}{\Delta}z\right)\right|\leq C\frac{\left|z\right|^{2}}{\Delta}s,\,\,\,t_{j}\leq s\leq t_{j+1},j\in\mathbb{N}.
\]

\end{proof}

\subsection{Proof of Propositions \ref{propmufinite} and \ref{porpndeltnzero}}

\begin{proof}[Proof of Proposition \ref{propmufinite}]

For simplicity we will assume that $\mu=1$. Following the reasoning
in Section 3 in \cite{BasseBN11}, we can always find a measurable
modification of $X$, so without loss of generality we may and do
assume that $X$ is measurable and almost surely $\int_{0}^{t}X_{s}^{2}\mathrm{d}s<\infty$,
for all $t\geq0$. Thus, using the well known bound $\left(\sum_{i=1}^{d}\left|x_{i}\right|\right)^{2}\leq d\sum_{i=1}^{d}\left|x_{i}\right|^{2}$
and Jensen's inequality, we see that for any $V>0$ and $t\leq V$
\[
\left|\Delta_{n}\mathbf{S}_{t}^{n}-\int_{0}^{\left[nt\right]\Delta_{n}}X_{s}\mathrm{d}s\right|^{2}\leq Vn\Delta_{n}\sum_{i=0}^{\left[nt\right]-1}\int_{t_{i}}^{t_{i+1}}\left|X_{t_{i}}-X_{s}\right|^{2}\mathrm{d}s\leq C\sum_{i=0}^{\left[nV\right]-1}\int_{t_{i}}^{t_{i+1}}\left|X_{t_{i}}-X_{s}\right|^{2}\mathrm{d}s,
\]
where we have used that $n\Delta_{n}$ is bounded. From this estimate
we deduce that as $n\rightarrow\infty$
\begin{align*}
\mathbb{E}\left(\sup_{0\leq t\leq T}\left|\Delta_{n}\mathbf{S}_{t}^{n}-\int_{0}^{\left[nt\right]\Delta_{n}}X_{s}\mathrm{d}s\right|^{2}\right) & \leq C\sum_{i=0}^{\left[nV\right]-1}\int_{t_{i}}^{t_{i+1}}\int_{0}^{s-t_{i}}a(r)\mathrm{d}r\mathrm{d}s\\
 & \leq Ca(0)(\left[nV\right]\Delta_{n})\Delta_{n}\rightarrow0.
\end{align*}
The result now follows by observing that in view that $\left|t-\left[nt\right]\Delta_{n}\right|\leq\Delta_{n}+V\left|1-\Delta_{n}n\right|$
and that 
\begin{align*}
\left|\int_{0}^{t}X_{s}\mathrm{d}s-\int_{0}^{\left[nt\right]\Delta_{n}}X_{s}\mathrm{d}s\right|^{2} & \leq\left|t-\left[nt\right]\Delta_{n}\right|\int_{0}^{C}X_{s}^{2}\mathrm{d}s.
\end{align*}

\end{proof}

\begin{proof}[Proof of Proposition \ref{porpndeltnzero}]Plainly,
from (\ref{partitionAn}) 
\begin{equation}
\frac{\left[nt\right]}{n}X_{0}-\frac{1}{n}S_{[nt]}^{\Delta_{n},4}=\frac{\left[nt\right]}{n}\sum_{j=0}^{\left[nt\right]-1}\chi_{0,j}^{\Delta_{n}},\label{deltanzerodecomposition1}
\end{equation}
which in view of (\ref{lebpartition}) implies that
\[
\mathcal{C}\left(z\ddagger\left(\frac{1}{n}S_{[nt]}^{\Delta_{n},4}-\frac{\left[nt\right]}{n}X_{0}\right)\right)=\psi\left(\frac{\left[nt\right]}{n}z\right)\int_{0}^{\left[nt\right]\Delta_{n}}a(s)\mathrm{d}s\rightarrow0.
\]
Therefore, thanks to (\ref{eq:decompSn}) we only need to check that
for $l=1,2,3$, $\frac{1}{n}S_{n}^{\Delta_{n},l}\overset{\mathbb{P}}{\rightarrow}0$.
To see this observe that from equations (\ref{chfncS1})-(\ref{chfncS3}),
the continuity of $\psi$ 
\begin{align*}
\left|\mathcal{C}\left(z\ddagger\frac{1}{n}S_{n}^{\Delta_{n},1}\right)\right| & \leq C\int_{0}^{t_{n}}\left(t_{n}-s+\Delta_{n}\right)\left|\frac{a(s)-a(s+\Delta_{n})}{\Delta_{n}}\right|\mathrm{d}s\leq Ct_{n}^{2}\rightarrow0,
\end{align*}
and
\[
\left|\mathcal{C}\left(z\ddagger\frac{1}{n}S_{n}^{\Delta_{n},2}\right)+\mathcal{C}\left(z\ddagger S_{n}^{\Delta_{n},3}\right)\right|\leq C\int_{0}^{t_{n}}a(s)\mathrm{d}s\rightarrow0,
\]
where we have further used that $a$ is continuously differentiable
in a neighborhood of $0$. This completes the proof.\end{proof}

\subsection{Proof of Theorem \ref{thmndeltnzero}}

Our proof in this case relies heavily on the asymptotic behavior of
the L\'{e}vy measure of $L$ around $0$. It is worth noting that if $L$
is deterministic, then almost surely $Z_{t}^{n}\equiv0$, so by the
L\'{e}vy-It\^{o} decomposition of L\'{e}vy bases (see \cite{Ped03}), in our proof
we will always assume that $\gamma=\int_{\left|x\right|\leq1}x\nu(\mathrm{d}x)$
or $\gamma=0$, depending whether $\int_{\mathbb{R}}(1\land\left|x\right|)\nu(\mathrm{d}x)<\infty$
or not. In this situation, under Assumption \ref{assumptionmu0},
Theorem 2 in \cite{IvanovJ18} establishes that as $\varepsilon\rightarrow0$
\begin{equation}
\varepsilon\psi(\varepsilon^{-1/\beta}z)\rightarrow\psi_{\beta}(z)=\begin{cases}
-\frac{1}{2}b^{2}z^{2} & \text{if }b>0\text{ and }\beta=2;\\
\psi(z;\beta,K_{+}\beta,K_{-}\beta,\tilde{\gamma}) & \text{under iii.}\text{ and }0<\beta<2;
\end{cases}\label{chfctat0}
\end{equation}
where $\psi(\cdot;\beta,K_{+}\beta,K_{-}\beta,\tilde{\gamma})$ as
in (\ref{chfnctstrictlystabe}). Note that the convergence takes place
uniformly in compacts. The proof is divided in several steps: In the
first step we show that $S_{n}^{\Delta_{n},1}=\mathrm{o}_{\mathbb{P}}(nT_{n}{}^{1/\beta})$.
In the second step we argue that $L$ can be assumed to be strictly
$\beta$-stable. Finally, we show that \textbf{i. and ii. }hold. 

\begin{proof}[Step 1: $S_{n}^{\Delta_{n},1}=\mathrm{o}_{\mathbb{P}}(n(n\Delta_{n})^{1/\beta})$]Assume
that (\ref{chfctat0}) holds. and put
\begin{align*}
A_{n}^{\prime}(z):= & \sum_{j=1}^{n-1}\int_{t_{j}}^{t_{j+1}}(T_{n}-s)\psi_{\beta}\left(\frac{t_{j}}{T_{n}{}^{1+1/\beta}}z\right)\left[\frac{a(s)-a(s+\Delta_{n})}{\Delta_{n}}\right]\mathrm{d}s\\
 & +\sum_{j=1}^{n-1}\int_{t_{j}}^{t_{j+1}}(s-t_{j})\psi_{\beta}\left(\frac{t_{j}}{T_{n}{}^{1+1/\beta}}z\right)\left[\frac{a(s)-a(s+\Delta_{n})}{\Delta_{n}}\right]\mathrm{d}s.
\end{align*}
The $C^{1}$ property of $a$ and the fact that $0\leq t_{j}/T_{n}\leq1$
then lead us to
\begin{align*}
\left|\mathcal{C}\left(z\ddagger\frac{1}{nT_{n}^{1/\beta}}S_{n}^{\Delta_{n},2}\right)-A_{n}^{\prime}(z)\right| & \leq C\left(\sup_{\left|u\right|\leq\left|z\right|}T_{n}\left|\psi_{\beta}\left(\frac{u}{T_{n}{}^{1/\beta}}\right)-\psi\left(\frac{u}{T_{n}{}^{1/\beta}}\right)\right|\right)(T_{n}+\Delta_{n})\rightarrow0.
\end{align*}
Similarly
\[
\left|A_{n}^{\prime}(z)\right|\leq C(T_{n}+\Delta_{n})\rightarrow0,\,\,\,n\rightarrow\infty,
\]
where we have also used the fact that $\psi_{\beta}$ is strictly
stable and continuous. This is enough for the negligibility of $\frac{1}{n(n\Delta_{n})^{1/\beta}}S_{n}^{\Delta_{n},1}$.

\end{proof}

\begin{proof}[Step 2: An approximation]In this step we assume that
(\ref{chfctat0}) for some $0<\beta\leq2$. From (\ref{eq:decompSn}),
(\ref{deltanzerodecomposition1}) the previous step, we have that
\begin{equation}
Z_{t}^{n}=U_{t}^{n}-\hat{U}_{t}^{n}+\mathrm{o}_{\mathbb{P}}(T_{n}^{1/\beta}),\,\,t\geq0,\label{decompUUtilde}
\end{equation}
where $U_{t}^{n}:=\sum_{j=0}^{\left[nt\right]-1}\left(\frac{\left[nt\right]}{n}-\frac{j+1}{n}\right)\chi_{0,j}^{\Delta_{n}}$
and $\hat{U}_{t}^{n}:=\frac{1}{n}S_{\left[nt\right]}^{\Delta_{n},2}$.
Furthermore, $(U_{t}^{n,\beta},\hat{U_{t}}^{n,\beta})_{t\geq0}$ are
defined as $(U_{t}^{n},\hat{U_{t}}^{n})_{t\geq0}$ when we replace
$L$ by a homogeneous strictly $\beta$-stable distribution whose
seed has characteristic exponent given by $\psi_{\beta}$. We are
going to show that the f.d.d. of $(U^{n},\hat{U}^{n})$ are asymptotically
equivalent to those of $(U^{n,\beta},\hat{U}^{n,\beta})$. Indeed,
fix $q\in\mathbb{N}$, $\lambda_{1},\ldots,\lambda_{q}\in\mathbb{R}$
and $0=u_{0}<u_{1}<\cdots<u_{q}$ and note that
\begin{align*}
\sum_{l=1}^{q}\lambda_{l}U_{u_{l}}^{n} & =\sum_{j=0}^{\left[nq\right]-1}\theta_{j,n}\chi_{0,j}^{\Delta_{n}},\\
\sum_{l=1}^{q}\lambda_{l}\hat{U}_{u_{l}}^{n} & =\sum_{i=1}^{\left[nq\right]-1}\sum_{k=1}^{q}\left(\zeta_{i,\left[nu_{k}\right]}^{\Delta_{n}}-\zeta_{i,\left[nu_{k+1}\right]}^{\Delta_{n}}\right)\hat{\theta}_{i,k,n},
\end{align*}
where $\zeta_{i,\left[nu_{q+1}\right]}^{\Delta_{n}}:=0$, and
\begin{align*}
\theta_{j,n} & :=\sum_{l=1}^{q}\sum_{m=l}^{q}\lambda_{m}\mathbf{1}_{\left[nu_{l-1}\right]\leq j<\left[nu_{l}\right]}\left(\frac{\left[nu_{m}\right]}{n}-\frac{j+1}{n}\right),\\
\hat{\theta}_{i,k,n} & :=\sum_{m=1}^{k}\sum_{l=m}^{k}\lambda_{l}\mathbf{1}_{\left[nu_{m-1}\right]\leq i<\left[nu_{m}\right]}\left(\frac{\left[nu_{l}\right]}{n}-\frac{i}{n}\right).
\end{align*}
 Whence, from (\ref{lebpartition}) and (\ref{chfctat0}) as $n\rightarrow\infty$
\begin{align*}
\left|\mathcal{C}\left(z\ddagger\frac{1}{T_{n}^{1/\beta}}\sum_{l=1}^{q}\lambda_{l}U_{u_{l}}^{n}\right)-\mathcal{C}\left(z\ddagger\frac{1}{T_{n}^{1/\beta}}\sum_{l=1}^{q}\lambda_{l}U_{u_{l}}^{n,\beta}\right)\right| & \leq C\sup_{\left|u\right|\leq\left|C_{\lambda}z\right|}T_{n}\left|\psi_{\beta}\left(\frac{u}{T_{n}{}^{1/\beta}}\right)-\psi\left(\frac{u}{T_{n}{}^{1/\beta}}\right)\right|\rightarrow0,\\
\left|\mathcal{C}\left(z\ddagger\frac{1}{T_{n}^{1/\beta}}\sum_{l=1}^{q}\lambda_{l}\hat{U}_{u_{l}}^{n}\right)-\mathcal{C}\left(z\ddagger\frac{1}{T_{n}^{1/\beta}}\sum_{l=1}^{q}\lambda_{l}\hat{U}_{u_{l}}^{n,\beta}\right)\right| & \leq C\sup_{\left|u\right|\leq\left|C_{\lambda}z\right|}T_{n}\left|\psi_{\beta}\left(\frac{u}{T_{n}{}^{1/\beta}}\right)-\psi\left(\frac{u}{T_{n}{}^{1/\beta}}\right)\right|\rightarrow0.
\end{align*}
where $C_{\lambda}:=2u_{q}\sum_{l=1}^{q}\sum_{m=l}^{q}\left|\lambda_{m}\right|\mathbf{1}_{\left[nu_{l-1}\right]\leq j<\left[nu_{l}\right]}\geq\left|\theta_{j,n}\right|+\left|\hat{\theta}_{i,k,n}\right|$,
as claimed.

\end{proof}

\begin{proof}[Step 3: Proof of i. and ii.]We start by showing that
the f.d.d. distributions of $Z^{n}$converge to those stated in the
theorem. In the last part we show that the convergence in distribution
can strengthened to stable convergence.

Assume that $b>0$. In this case, in virtue of Step 2, we may and
do assume that $\gamma=0$ and $\nu\equiv0$. Accordingly, $Z^{n}$
is a centered Gaussian process satisfying (\ref{decompUUtilde}).
Thus, by the independent scattered property of $L$, the convergence
in i. is achieved whenever
\begin{equation}
\frac{1}{T_{n}}\mathbb{E}(U_{t}^{n}U_{u}^{n})\rightarrow\sigma^{2}\int_{0}^{t\land u}(t-r)(u-r)\mathrm{d}r;\,\,\frac{1}{T_{n}}\mathbb{E}(\hat{U}_{t}^{n}\hat{U}_{u}^{n})\rightarrow\sigma^{2}\int_{0}^{t\land u}(t-r)(u-r)\mathrm{d}r.\label{convergbnonull}
\end{equation}
To see that this is the case, take $t\geq u\geq0$. Then
\begin{align*}
\frac{1}{T_{n}}\mathbb{E}\left(U_{t}^{n}U_{u}^{n}\right) & =b^{2}\sum_{j=0}^{\left[nu\right]-1}\int_{j/n}^{(j+1)/n}\left(\frac{\left[nt\right]}{n}-\frac{j+1}{n}\right)\left(\frac{\left[nu\right]}{n}-\frac{j+1}{n}\right)a(T_{n}s)\mathrm{d}s.\\
\frac{1}{T_{n}}\mathbb{E}(\hat{U}_{t}^{n}\hat{U}_{u}^{n}) & =b^{2}\sum_{i=1}^{\left[nu\right]-1}\int_{(i-1)/n}^{i/n}\left(\frac{\left[nt\right]}{n}-\frac{i}{n}\right)\left(\frac{\left[nu\right]}{n}-\frac{i}{n}\right)a\left[T_{n}(\frac{\left[nt\right]}{n}-s)\right]\mathrm{d}s.
\end{align*}
(\ref{convergbnonull}) follows now an easy application of the Dominated
Convergence Theorem.

Suppose now that $b=0$ and Assumption \ref{assumptionmu0} holds,
such that (\ref{convergbnonull}). Therefore, by previous step, we
may and do assume that $L$ is strictly stable with characteristic
exponent $\psi_{\beta}$. Therefore, under the notation of Step 2,
the strict stability of $\psi_{\beta}$ results in
\begin{align*}
\mathcal{C}\left(z\ddagger\frac{1}{T_{n}^{1/\beta}}\sum_{l=1}^{q}\lambda_{l}U_{u_{l}}^{n}\right) & =\sum_{l=1}^{q}\sum_{j=\left[nu_{l-1}\right]}^{\left[nu_{l}\right]-1}\int_{j/n}^{(j+1)/n}\psi_{\beta}\left(\sum_{m=l}^{q}\lambda_{m}\left(\frac{\left[nu_{m}\right]}{n}-\frac{j+1}{n}\right)z\right)a(T_{n}s)\mathrm{d}s\\
 & \rightarrow a(0)\sum_{l=1}^{q}\int_{u_{l-1}}^{u_{l}}\psi_{\beta}\left(\sum_{m=l}^{q}\lambda_{m}\left(u_{m}-s\right)z\right)\mathrm{d}s,\,\,\,n\rightarrow\infty.
\end{align*}
Similarly, as $n\rightarrow\infty$
\begin{align*}
\mathcal{C}\left(z\ddagger\frac{1}{T_{n}^{1/\beta}}\sum_{l=1}^{q}\lambda_{l}\hat{U}_{u_{l}}^{n}\right)= & \sum_{x=1}^{q}\sum_{j=\left[nu_{x-1}\right]}^{\left[nu_{x}\right]-1}\int_{(i-1)/n}^{i/n}\psi_{\beta}\left(\sum_{l=x}^{q}\lambda_{m}\left(\frac{\left[nu_{l}\right]}{n}-\frac{i}{n}\right)z\right)a\left[T_{n}\left(\frac{[nu_{q}]}{n}-s\right)\right]\mathrm{d}s\\
 & +\sum_{x=1}^{q}\sum_{j=\left[nu_{x-1}\right]}^{\left[nu_{x}\right]-1}\sum_{k=x}^{q-1}\int_{(i-1)/n}^{i/n}\psi_{\beta}\left(\sum_{l=x}^{k}\lambda_{m}\left(\frac{\left[nu_{l}\right]}{n}-\frac{i}{n}\right)z\right)\\
 & \times\left(a\left[T_{n}\left(\frac{[nu_{k}]}{n}-s\right)\right]-a\left[T_{n}\left(\frac{[nu_{k+1}]}{n}-s\right)\right]\right)\mathrm{d}s\\
 & \rightarrow a(0)\sum_{x=1}^{q}\int_{u_{x-1}}^{u_{x}}\psi_{\beta}\left(\sum_{l=x}^{q}\lambda_{m}\left(u_{l}-s\right)z\right)\mathrm{d}s.
\end{align*}
We have therefore shown that the f.d.d. of $Z$ converge weakly to
those of stated in theorem. Therefore, in order to conclude the proof
it rests to verify that the convergence also take place stably and
the limit is independent of $L$. Let $B$ be a bounded Borel set.
Since for every $n\in\mathbb{N}$,$Z^{n}$ is$\mathcal{F}_{L}$-measurable,
thanks to Theorem 3.2 in \cite{HauslerLuschgy15}, it is sufficient
to show that
\begin{equation}
(\{Z_{u_{l}}\}_{l=1}^{q},L(B))\rightarrow\left(\{H_{u_{l}}\}_{l=1}^{q},L(B)\right),\label{stableconvmu0}
\end{equation}
and that for all $z_{1},\ldots,z_{q+1}\in\mathbb{R}$.
\begin{equation}
\mathcal{C}\left((z_{1},\ldots,z_{q+1})\ddagger\left(\{H_{u_{l}}\}_{l=1}^{q},L(B)\right)\right)=\mathcal{C}\left((z_{1},\ldots,z_{q})\ddagger\{H_{u_{l}}\}_{l=1}^{q}\right)+\mathcal{C}\left(z_{q+1}\ddagger L(B)\right).\label{stableconvmu02}
\end{equation}
Put 
\[
B_{n}=\bigcup_{j=0}^{[u_{q}n]}\mathcal{\mathcal{P}}_{A}^{\Delta_{n}}(0,j)\cup\bigcup_{i=1}^{[u_{q}n]}\bigcup_{j\geq i}\mathcal{\mathcal{P}}_{A}^{\Delta_{n}}(i,j).
\]
Then by (\ref{lebpartition}), $Leb(B\cap B_{n})\leq2a(0)[u_{q}n]\Delta_{n}\rightarrow0$,
meaning this that $L(B\cap B_{n})\overset{\mathbb{P}}{\rightarrow}0$.
Relations (\ref{stableconvmu0}) and (\ref{stableconvmu02}) are easily
obtained by decomposing $L(B)=L(B\cap B_{n})+L(B\backslash B_{n})$,
the preceeding observation, and an application of Slutsky's Theorem.

\end{proof}

\subsection{Proof of Theorem \ref{theoremmean}}

Here we show the validity of Theorem \ref{theoremmean}. The proof
will be divided into three steps. We first show the convergence of
the finite-dimensional distributions. Secondly, we verify that our
sequence is tight. We conclude by proving that the convergence is
also stable. Therefore, for the rest of this subsection we will let
Assumption \ref{assumptionshortmem} hold. We finally emphasize that
thanks to the Lévy-Itô decomposition of L\'{e}vy bases (see \cite{Ped03})
and Lemma \ref{lemmavarianceSm}, we may and do assume that $L$ has
no Gaussian component, i.e. $b=0$.

\begin{proof}[Step 1: Convergence of the f.d.d.]We start by verifying
that for any $m_{n}\in\mathbb{N}$ such that $m_{n}\uparrow\infty$,
$\Delta_{n}m_{n}\rightarrow\infty$ and $\Delta_{n}\downarrow0$,
as $n\rightarrow\infty$, it holds that
\begin{equation}
\sqrt{\frac{\Delta_{n}}{m_{n}}}S_{m_{n}}^{\Delta_{n}}\overset{d}{\rightarrow}\sigma_{a}N(0,1).\label{cltshortmemory}
\end{equation}
Observe first that thanks to Assumption \ref{assumptionshortmem},
$\int_{\mathbb{R}}\left|\varGamma_{X}(s)\right|\mathrm{d}s<\infty$,
and that for any $0\leq p<p_{0}$ the measure 
\[
\mu_{p,a}(\mathrm{ds})=\mathbf{1}_{s\geq0}s^{p}\mathrm{d}\left|a\right|(s),
\]
is finite. Moreover, from Lemma \ref{leadingtermshortmemory}, we
have that
\[
\sqrt{\frac{\Delta_{n}}{m_{n}}}S_{m_{n}}^{\Delta_{n}}=\sqrt{\frac{\Delta_{n}}{m_{n}}}S_{m_{n}}^{\Delta_{n},1}+\mathrm{o}_{\mathbb{P}}(1)=\frac{1}{\sqrt{\Delta_{n}m_{n}}}\sum_{j=1}^{m_{n}-1}t_{j}\xi_{j,n}+\mathrm{o}_{\mathbb{P}}(1),
\]
where the array
\[
\xi_{j,n}:=\sum_{i=1}^{m_{n}-j}\chi_{i,i+j-1}^{\Delta_{n}},\,\,\,j=1,\ldots,m_{n}-2,
\]
is centered and row-wise independent. Therefore, (\ref{cltshortmemory})
will hold whenever
\begin{equation}
\frac{1}{\Delta_{n}m_{n}}\sum_{j=1}^{m_{n}-1}t_{j}^{2}\mathbb{E}\left(\left|\xi_{j,n}\right|^{2}\right)=\frac{\Delta_{n}}{m_{n}}\mathrm{Var}\left(S_{m_{n}}^{\Delta_{n},1}\right)\rightarrow\sigma_{a}^{2},\,\,\,n\rightarrow\infty,\label{condvarianceCLT}
\end{equation}
and if the Lyapunov condition is satisfied, i.e. for some $p>2$

\begin{equation}
I_{n,p,1}:=\sum_{j=1}^{m_{n}-1}t_{j}^{p}\mathbb{E}\left(\left|\xi_{j,n}\right|^{p}\right)=\mathrm{o}\left((m_{n}\Delta_{n})^{p/2}\right),\,\,\,\text{as }n\rightarrow\infty.\label{eq:lyapunov}
\end{equation}
Lemmas \ref{lemmavarianceSm} and \ref{leadingtermshortmemory} immediately
imply (\ref{condvarianceCLT}). Now fix $p_{0}\land3>p>2$. Thanks
to (\ref{lebpartition}), we deduce that the L\'{e}vy measure of $\xi_{j,n}$
is given by
\[
\nu_{\xi_{j,n}}(\cdot)=(m_{n}-j)\int_{t_{j-1}}^{t_{j}}\left[a(s)-a(s+\Delta_{n})\right]\mathrm{d}s\nu(\cdot),\,\,j=1,\ldots,m_{n}-1.
\]
Therefore, from Corollary 1.2.7. in \cite{Turner11}, there is a constant
$C>0$ only depending on $p$ and $\nu(\cdot)$, such that 
\[
\mathbb{E}\left(\left|\xi_{j,n}\right|^{p}\right)\leq C\max\left\{ (m_{n}-j)\int_{t_{j-1}}^{t_{j}}\left[a(s)-a(s+\Delta_{n})\right]\mathrm{d}s,\left((m_{n}-j)\int_{t_{j-1}}^{t_{j}}\left[a(s)-a(s+\Delta_{n})\right]\mathrm{d}s\right)^{p/2}\right\} .
\]
Hence
\begin{equation}
I_{n,p,1}\leq C(I_{n,p,1}^{(1)}+I_{n,p,1}^{(2)}),\label{boundInp1}
\end{equation}
where we have let 
\begin{align*}
I_{n,p,1}^{(1)} & :=\sum_{j=1}^{m_{n}-1}(t_{m_{n}}-t_{j})\int_{t_{j-1}}^{t_{j+1}}\left|t_{j}\right|^{p}\mathrm{d}\left|a\right|(s);\\
I_{n,p,1}^{(2)} & :=\sum_{j=1}^{m_{n}}(t_{m_{n}}-t_{j})^{p/2}\left(\int_{t_{j-1}}^{t_{j+1}}\left|t_{j}\right|^{2}\mathrm{d}\left|a\right|(s)\right)^{p/2}.
\end{align*}
Thus, (\ref{eq:lyapunov}) is obtained whenever $I_{n,p,1}^{(1)}+I_{n,p,1}^{(2)}=\mathrm{o}\left((m_{n}\Delta_{n})^{p/2}\right)$.
Observe that for any $j=1,\ldots,m_{n}-1$, $t_{j-1}\leq\zeta\leq t_{j}$
and $p_{0}>q\geq2$, it holds that 
\[
\int_{t_{j-1}}^{t_{j}}\left|\zeta^{q}-s^{q}\right|\mathrm{d}\left|a\right|(s)\leq C(\Delta_{n}\mu_{q-1,a}(t_{j-1},t_{j}]+\Delta_{n}^{2}\mu_{q-2,a}(t_{j-1},t_{j}]+\Delta_{n}^{q}\mu_{0,a}(t_{j-1},t_{j}]).
\]
Furthermore, since the distribution function
\[
F_{p,a}(x):=\mu_{p,a}(-\infty,x],\,\,\,x\in\mathbb{R},p<p_{0},
\]
is continuous and bounded, it is also uniformly continuous on $\mathbb{R}$.
Hence, as $n\rightarrow\infty$
\[
\max_{1,\ldots,n}\mu_{p,a}(t_{j-1},t_{j}]=\max_{1,\ldots,n}\left|F_{p,a}(t_{j})-F_{p,a}(t_{j-1})\right|\rightarrow0,\,\,\,\,2\leq p<p_{0}.
\]
Using the previous properties and the fact that $\mu_{p,a}(\mathbb{R})<\infty$,
one easily deduce that for any $p_{0}>p>2$ 
\begin{align}
\frac{1}{(m_{n}\Delta_{n})^{p/2}}\left|I_{n,p,1}^{(1)}\right| & \leq C\frac{1}{(m_{n}\Delta_{n})^{p/2-1}}\sum_{j=1}^{m_{n}-1}\int_{t_{j-1}}^{t_{j}}\left|t_{j}\right|^{p}\mathrm{d}\left|a\right|(s)\label{ineqInp1(1)}\\
 & \leq C\frac{1}{(m_{n}\Delta_{n})^{p/2-1}}+\mathrm{o}(1)\rightarrow0,\nonumber 
\end{align}
and that
\begin{align*}
\frac{1}{(m_{n}\Delta_{n})^{p/2}}\left|I_{n,p,1}^{(2)}\right| & \leq C\sum_{j=1}^{m_{n}-1}\left\{ \int_{t_{j-1}}^{t_{j}}\left|t_{j}\right|^{2}\mathrm{d}\left|a\right|(s)\right\} ^{p/2}\\
 & \leq C\max_{j=1,\ldots,m_{n}}\mu_{l,a}(t_{j-1},t_{j}]^{p/2-1}+\mathrm{o}(1)\rightarrow0,
\end{align*}
which concludes the argument for (\ref{cltshortmemory}). 

Now, let $\lambda_{0},\ldots,\lambda_{r}\in\mathbb{R}$ and $0=t_{0}<t_{1}<\cdots<t_{r}=1$.
To show the convergence of the finite-dimensional distributions, we
are going to verify that 
\begin{equation}
\sqrt{\frac{\Delta_{n}}{n}}\sum_{q=1}^{r}\lambda_{q}(\mathbf{S}_{t_{q}}^{n}-\mathbf{S}_{t_{q-1}}^{n})\overset{d}{\rightarrow}\sigma_{a}\left(\sum_{q=1}^{r}\lambda_{q}^{2}(t_{q}-t_{q-1})\right)^{1/2}N(0,1).\label{ffdconv}
\end{equation}
Thanks to Lemma (\ref{leadingtermshortmemory})
\[
\sqrt{\frac{\Delta_{n}}{n}}\sum_{q=1}^{r}\lambda_{q}(\mathbf{S}_{t_{q}}^{n}-\mathbf{S}_{t_{q-1}}^{n})=\sqrt{\frac{\Delta_{n}}{n}}\sum_{q=1}^{r}\lambda_{q}\left(S_{\left[nt_{q}\right]}^{\Delta_{n},1}-S_{\left[nt_{q-1}\right]}^{\Delta_{n},1}\right)+\mathrm{o}_{\mathbb{P}}(1),\,\,\,n\in\mathbb{N},
\]
Moreover, in view that 
\[
\sum_{q=1}^{r}\lambda_{q}\left(S_{\left[nt_{q}\right]}^{\Delta_{n},1}-S_{\left[nt_{q-1}\right]}^{\Delta_{n},1}\right)=\sum_{i=1}^{n-1}\sum_{j=i}^{n-1}d_{n,j,i}\chi_{i,j}^{\Delta},
\]
where
\[
d_{n,j,i}:=\sum_{q=1}^{r}\lambda_{q}\sum_{k=i}^{j}\mathbf{1}_{\{\left[nt_{q-1}\right]\leq k\leq\left[nt_{q}\right]-1\}},
\]
it follows that for $p_{0}\land3>p>2$, 
\begin{equation}
\left(\frac{\Delta_{n}}{n}\right)^{p/2}\mathbb{E}\left(\sum_{i=0}^{n-1}\sum_{j=i}^{n-1}\left|d_{n,j,i}\chi_{i,j}^{\Delta}\right|^{p}\right)\leq C\frac{1}{(m_{n}\Delta_{n})^{p/2}}I_{n,p,1}\rightarrow0.\label{lyapunovfdd}
\end{equation}
where we have used (\ref{eq:lyapunov}). Hence, in view that $L$
is independently scattered, (\ref{ffdconv}) is obtained whenever
\begin{equation}
\frac{\Delta_{n}}{n}\mathbb{E}\left\{ \left[\sum_{q=1}^{r}\lambda_{q}\left(S_{\left[nt_{q}\right]}^{\Delta_{n},1}-S_{\left[nt_{q-1}\right]}^{\Delta_{n},1}\right)\right]^{2}\right\} \rightarrow\sigma_{a}^{2}\sum_{q=1}^{r}\lambda_{q}^{2}(t_{q}-t_{q-1})^{.},\,\,\,\text{as }n\rightarrow\infty.\label{varianceconvshortmem}
\end{equation}
Since for any $N>M>K>U$
\begin{align}
S_{N}^{\Delta,1}-S_{M}^{\Delta,1} & =\sum_{i=1}^{M-1}\sum_{j=M}^{N-1}(j-i+1)\chi_{i,j}^{\Delta}+\sum_{i=M}^{N-1}\sum_{j=i}^{N-1}(j-i+1)\chi_{i,j}^{\Delta},\label{IncremS11}\\
S_{K}^{\Delta,1}-S_{J}^{\Delta,1} & =\sum_{i=1}^{U-1}\sum_{j=U}^{K-1}(j-i+1)\chi_{i,j}^{\Delta}+\sum_{i=U}^{K-1}\sum_{j=i}^{K-1}(j-i+1)\chi_{i,j}^{\Delta},\label{IncremS12}
\end{align}
it follows that 
\[
\mathbb{E}\left[\left(S_{\left[nt_{q}\right]}^{\Delta_{n},1}-S_{\left[nt_{q-1}\right]}^{\Delta_{n},1}\right)\left(S_{\left[nt_{l}\right]}^{\Delta_{n},1}-S_{\left[nt_{l-1}\right]}^{\Delta_{n},1}\right)\right]=0,\,\,\,\text{if }l\neq q.
\]
Combining Lemmas \ref{lemmavarianceSm} and \ref{leadingtermshortmemory},
and the stationarity of $X$, (\ref{varianceconvshortmem}) follows
immediately.\end{proof}

\begin{proof}[Step 2: Tightness]Using similar arguments as in the
proof of Lemma 2.1 in \cite{Taqqu75} and Lemma \ref{lemmavarianceSm},
we deduce that $\sqrt{\frac{\Delta_{n}}{n}}\mathbf{S}^{n}$ is tight
if for any sequence $m_{n}\in\mathbb{N}$, such that $m_{n}\uparrow\infty$,
$\Delta_{n}m_{n}\rightarrow\infty$ and $\Delta_{n}\downarrow0$,
as $n\rightarrow\infty$, it holds that
\begin{equation}
\mathbb{E}\left(\left|S_{m_{n}}^{\Delta_{n}}\right|^{p}\right)=\mathrm{O}\left(\left(\frac{m_{n}}{\Delta_{n}}\right)^{p/2}\right),\label{tightnesscond}
\end{equation}
for some $p>2$. We proceed now to show that (\ref{tightnesscond})
is fulfilled . By (\ref{eq:decompSn}), we have that
\[
\left(\frac{\Delta_{n}}{m_{n}}\right)^{p/2}\mathbb{E}\left(\left|S_{m_{n}}^{\Delta_{n}}\right|^{p}\right)\leq C\left\{ I_{n,p,1}+I_{n,p,2}+I_{n,p,3}+I_{n,p,4}\right\},
\]
where $I_{n,p,1}$ is as in step 1, and
\begin{align*}
I_{n,p,2} & =\frac{1}{(m_{n}\Delta_{n})^{p/2}}\sum_{i=1}^{m_{n}-1}(t_{m_{n}}-t_{i})^{p}\mathbb{E}\left(\left|\chi_{i,m_{n}-1}^{\Delta_{n}}\right|^{p}\right);\\
I_{n,p,3} & =\frac{1}{(m_{n}\Delta_{n})^{p/2}}\sum_{i=1}^{m_{n}-1}(t_{j+1})^{p}\mathbb{E}\left(\left|\chi_{0,j}^{\Delta_{n}}\right|^{p}\right);\\
I_{n,p,4} & =\left(\Delta_{n}m_{n}\right)^{p/2}\mathbb{E}\left(\left|\chi_{0,m_{n}-1}^{\Delta_{n}}\right|^{p}\right).
\end{align*}
We have already seen that $I_{n,p,1}\rightarrow0$ as $n\rightarrow\infty$.
On the other hand, invoking once again Corollary 1.2.7. in \cite{Turner11}
and using (\ref{lebpartition}), we get
\begin{align*}
\mathbb{E}\left(\left|\chi_{i,m_{n}-1}^{\Delta_{n}}\right|^{p}\right) & \leq C\max\left\{ \int_{t_{i-1}}^{t_{i}}a(t_{m_{n}-1}-s)\mathrm{d}s,\left(\int_{t_{i-1}}^{t_{i}}a(t_{m_{n}-1}-s)\mathrm{d}s\right)^{p/2}\right\} ,\\
\mathbb{E}\left(\left|\chi_{0,j}^{\Delta_{n}}\right|^{p}\right) & \leq C\max\left\{ \int_{t_{j-1}}^{t_{j}}a(s)\mathrm{d}s,\left(\int_{t_{i-1}}^{t_{i}}a(s)\mathrm{d}s\right)^{p/2}\right\} ,\\
\mathbb{E}\left(\left|\chi_{0,m_{n}-1}^{\Delta_{n}}\right|^{p}\right) & \leq C\max\left\{ \int_{t_{m_{n}-1}}^{\infty}a(s)\mathrm{d}s,\left(\int_{t_{m_{n}-1}}^{\infty}a(s)\mathrm{d}s\right)^{p/2}\right\} .
\end{align*}
Similarly as in Step 1, we deduce that for $p_{0}\land3>p>2$ and
$n$ large enough, the following estimates are valid
\begin{align*}
I_{n,p,2} & \leq C\frac{1}{(m_{n}\Delta_{n})^{p/2}}\sum_{i=1}^{m_{n}-1}(t_{m_{n}}-t_{i})^{p}\int_{t_{i-1}}^{t_{i}}a(t_{m-1}-s)\mathrm{d}s,\\
I_{n,p,3} & \leq C\frac{1}{(m_{n}\Delta_{n})^{p/2}}\sum_{i=1}^{m_{n}-1}(t_{j+1})^{p}\int_{t_{j-1}}^{t_{j}}a(s)\mathrm{d}s,\\
I_{n,p,4} & \leq C\left(\Delta_{n}m_{n}\right)^{p/2}\int_{\Delta_{n}\left(m_{n}-1\right)}^{\infty}a(s)\mathrm{d}s.
\end{align*}
Assumption \ref{assumptionshortmem} now asserts that as $n\rightarrow\infty$
\[
I_{p,4}\leq C(\Delta_{n}m_{n})^{p/2-p_{0}+1}\rightarrow0,
\]
because $2<p<p_{0}<2(p_{0}-1)$. Furthermore, analogous arguments
used to establish (\ref{approxvarS2}) and (\ref{approxvarS3}), can
be applied in order to get that
\begin{align*}
I_{p,2}+I_{p,3}= & C\frac{1}{(\Delta_{n}m_{n})^{p/2}}\int_{0}^{\Delta_{n}m_{n}}s^{p}a(s)\mathrm{d}s+\mathrm{o}(1)\rightarrow0.
\end{align*}
All in all give us
\[
\left(\frac{\Delta_{n}}{m_{n}}\right)^{p/2}\mathbb{E}\left(\left|S_{m_{n}}^{\Delta_{n}}\right|^{p}\right)\rightarrow0,
\]
being this enough for (\ref{tightnesscond}).

\end{proof}

\begin{proof}[Step 3: Stability]From Step 2, Proposition 3.9 in \cite{HauslerLuschgy15}
and its subsequent remark, the stable convergence in $\mathcal{D}([0,1])$
will be obtained if (\ref{varianceconvshortmem}) can be strengthened
to $\mathcal{G}^{X}$-stable convergence. Consider the filtration
\begin{equation}
\mathcal{F}_{i}^{n}:=\sigma\left(\left\{ L(\mathcal{\mathcal{P}}_{A}^{\Delta_{n}}(k,j))\right\} _{j\geq k}:k=0,\ldots,i\right),\,\,\,i=0,\ldots,n-1,\,\,n\in\mathbb{N}.\label{eq:filtration}
\end{equation}
From Step 1
\[
\sqrt{\frac{\Delta_{n}}{n}}\sum_{q=1}^{r}\lambda_{q}(\mathbf{S}_{t_{q}}^{n}-\mathbf{S}_{t_{q-1}}^{n})=\frac{1}{\sqrt{n\Delta_{n}}}\sum_{i=1}^{n-1}\tilde{\xi}_{i,n}+\mathrm{o}_{\mathbb{P}}(1),\,\,\,n\in\mathbb{N},
\]
where the array
\[
\tilde{\xi}_{i,n}=\sum_{j=i}^{n-1}d_{n,j,i}\chi_{i,j}^{\Delta}
\]
is $\mathcal{F}_{i}^{n}$-measurable and independent of $\mathcal{F}_{i-1}^{n}$
for all $i=1,\ldots,n-1$. Consequently, thanks to (\ref{varianceconvshortmem}),
(\ref{lyapunovfdd}) and Theorem 6.1 in \cite{HauslerLuschgy15},
it holds that 
\[
\sum_{q=1}^{r}\lambda_{q}(\mathbf{S}_{t_{q}}^{n}-\mathbf{S}_{t_{q-1}}^{n})\overset{\mathcal{G}-d}{\rightarrow}\sigma_{a}\left(\sum_{q=1}^{r}\lambda_{q}^{2}(t_{q}-t_{q-1})\right)^{1/2}N(0,1),
\]
where
\[
\mathcal{G}:=\sigma(\cup_{n\geq1}\cap_{N\geq n}\mathcal{F}_{n}^{N}),
\]
but in view that 
\[
\sigma(X_{0},X_{\Delta_{n}},\ldots,X_{i\Delta_{n}})\subseteq\mathcal{F}_{i}^{n},\,\,\,i=0,1,\ldots,m_{n}-1,
\]
it follows immediately that $\mathcal{G}^{X}\subseteq\mathcal{G},$
which concludes the proof.

\end{proof}

\subsection{Proof of Theorems \ref{theoremmean-2}-\ref{theoremmean-2-1}}

In this subsection we justify the statements of Theorems \ref{theoremmean}-\ref{theoremmean3}.
In what follows $(\gamma,b,\nu)$ and $\psi$ will denote respectively,
the characteristic triplet and exponent of $L$. We note that, for
the sake of exposition, the proof of each theorem is given in a corresponding
subsubsection.

\subsubsection{$b>0$}

For every $n\in\mathbb{N}$ we will let $r_{n}=\frac{1}{n\sqrt{a(n\Delta_{n})n\Delta_{n}}}$.
Observe that the same argument used in step 2 in the proof Theorem
\ref{theoremmean} together with Lemma \ref{lemmavarianceSm}, give
us automatically that $r_{n}\mathbf{S}^{n}$ is tight in $\mathcal{D}([0,1])$
if $\mathbb{E}\left(\left|L^{\prime}\right|^{2}\right)<\infty$. Therefore,
we only need to show that within the framework of Assumption \ref{assumptionlongmem},
the finite-dimensional distributions of $r_{n}\mathbf{S}^{n}$ converge
stably to those of the fBm with index $H=2-\kappa$/2. 

Before proceeding with the proof we would like to emphasize that in
this situation the Lyapunov condition is not satisfied in general.
Indeed, for instance if $L$ is symmetric, we have that 
\[
r_{n}^{2}\mathrm{Var}(S_{n}^{\Delta_{n},4})=\frac{n\Delta_{n}}{a(n\Delta_{n})(n\Delta_{n})^{2}}\int_{m_{n}\Delta_{n}}^{\infty}a(s)\mathrm{d}s\rightarrow\frac{1}{\kappa-1},
\]
but from Corollary 1.2.7. in \cite{Turner11}, for $n$ large enough
\[
C\left(\int_{n\Delta_{n}}^{\infty}a(s)\mathrm{d}s\right)^{p/2}\leq\mathbb{E}\left(\left|\chi_{0,n-1}^{\Delta_{n}}\right|^{p}\right).
\]
Thus, by Rosenthal's inequality, it holds that for any $p>2$
\[
r_{n}^{p}\mathbb{E}\left(\left|S_{n}^{\Delta_{n}}\right|^{p}\right)\geq C\left(\frac{1}{a(n\Delta_{n})n\Delta_{n}}\int_{n\Delta_{n}}^{\infty}a(s)\mathrm{d}s\right)^{p/2}\rightarrow\left(\frac{1}{\kappa-1}\right)^{p/2}>0.
\]
Nevertheless, from the proof of Theorem \ref{theoremmean-2} \textbf{ii.}
below, and the Lévy-Itô decomposition for L\'{e}vy bases, it follows that
the non-Gaussian component of $r_{n}\mathbf{S}^{n}$ is negligible.
Consequently, we may and do assume in this part that $\gamma=0$ and
$\nu\equiv0$.

\begin{proof}[Proof of Theorem \ref{theoremmean-2}]

Fix $q\in\mathbb{N}$, $\lambda_{1},\ldots,\lambda_{q}\in\mathbb{R}$
and $0=u_{0}<u_{1}<\cdots<u_{q}=1$. Let 
\[
\tilde{\xi}_{i,n}:=\sum_{j=i}^{n-1}\theta_{j,i}\chi_{i,j}^{\Delta_{n}}+\theta_{n-1,i}\zeta{}_{i,n}^{\Delta_{n}},
\]
where $\zeta{}_{i,n}^{\Delta}=\sum_{j\geq n}\chi_{i,j}^{\Delta_{n}}$
and
\[
\theta_{j,i}=\sum_{k=i}^{j}\sum_{l=1}^{q}\sum_{m=l}^{q}\lambda_{m}\mathbf{1}_{\left[nu_{l-1}\right]\leq k<\left[nu_{l}\right]}.
\]
Observe that $\tilde{\xi}_{i,n}$ is $\mathcal{F}_{i}^{n}$-measurable
(see (\ref{eq:filtration})), independent of $\mathcal{F}_{i-1}^{n}$,
centered, Gaussian, and such that
\begin{align*}
r_{n}\sum_{l=1}^{q}\lambda_{l}\mathbf{S}_{u_{l}}^{n} & =r_{n}\sum_{i=0}^{n-1}\tilde{\xi}_{i,n},\,\,\,n\in\mathbb{N}.
\end{align*}
Thus, thanks to the \mikko{Lindeberg}-Feller Theorem and Theorem 6.1 in \cite{HauslerLuschgy15},
for the stable convergence of the f.d.d. of $r_{n}\mathbf{S}^{n}$
to those of the fBm, we only need to check that as $n\rightarrow\infty$
\begin{equation}
r_{n}^{2}\mathbb{E}(\mathbf{S}_{t}^{n}\mathbf{S}_{u}^{n})\rightarrow\sigma_{\kappa}^{2}\left\{ t^{2H}+u^{2H}+\left|t-u\right|^{2H}\right\} ,\,\,\,\forall\,t,u\in[0,1].\label{covfb}
\end{equation}
Let $1\geq t\geq u\geq0$. In view that 
\[
\mathbb{E}(\mathbf{S}_{t}^{n}\mathbf{S}_{u}^{n})=\frac{1}{2}\left\{ \sum_{i=0}^{\left[nu\right]-1}\sum_{j=0}^{\left[nu\right]-1}\varGamma_{X}\left(\left|t_{i}-t_{j}\right|\right)+\sum_{i=0}^{\left[nt\right]-1}\sum_{j=0}^{\left[nt\right]-1}\varGamma_{X}\left(\left|t_{i}-t_{j}\right|\right)-\sum_{i=\left[nu\right]}^{\left[nt\right]-1}\sum_{j=\left[nu\right]}^{\left[nt\right]-1}\varGamma_{X}\left(\left|t_{i}-t_{j}\right|\right)\right\} ,
\]
we can use analogous estimates derived in the proof of Lemma \ref{lemmavarianceSm}
to deduce that
\begin{align*}
\sum_{i=0}^{\left[nt\right]-1}\sum_{j=0}^{\left[ns\right]-1}\varGamma_{X}\left(\left|t_{i}-t_{j}\right|\right) & =\frac{1}{\Delta_{n}^{2}}\int_{0}^{\left[nt\right]\Delta_{n}}\int_{0}^{r}\varGamma_{X}\left(s\right)\mathrm{d}s\mathrm{d}r+\frac{1}{\Delta_{n}^{2}}\int_{0}^{\left[ns\right]\Delta_{n}}\int_{0}^{r}\varGamma_{X}\left(s\right)\mathrm{d}s\mathrm{d}r\\
 & -\frac{1}{\Delta_{n}^{2}}\int_{0}^{\left[nt\right]\Delta_{n}-\left[ns\right]\Delta_{n}}\int_{0}^{r}\varGamma_{X}\left(s\right)\mathrm{d}s\mathrm{d}r+\mathrm{O}(n).
\end{align*}
Relation (\ref{covfb}) now follows easily from this and KT.

\end{proof}

\subsubsection{$b=0$}

In this part, unless otherwise said, we will always assume that $b=0$.
Observe that under the assumptions of Theorem \ref{theoremmean3} ii.
(\ref{chfctat0}) is once again valid. We recall to thew reader that
we are also assuming that $L$ has mean zero. Finally, we would like
to stress that by replacing $\beta_{\nu}$ by $2$ below, it follows
that
\[
\frac{1}{n\sqrt{a(n\Delta_{n})n\Delta_{n}}}\mathbf{S}_{t}^{n}\overset{\mathbb{P}}{\rightarrow}0,\,\,\,n\rightarrow\infty.
\]
 We now proceed to present a proof for Theorem \ref{theoremmean3}.

\begin{proof}[Proof of Theorem \ref{theoremmean3}]The proof is organized
as follow: First, based on our assumption, we derive some preliminary
estimates. Secondly, by using (\ref{eq:decompSn}), we approximate
the characteristic function of $\mathbf{S}_{t}^{n}$ by means of $I_{n}^{\Delta,1}(\cdot)$
and $I_{n}^{\Delta,2}(\cdot)$, where the latter are as in Lemma \ref{lemmaapproxchf}.
We conclude by applying such approximation to obtain the desired result.
For the rest of the proof we will use the notation $T_{n}=n\Delta_{n}$,
$r_{n}=a(T_{n})T_{n}$, and$\alpha=\kappa-1$ as well as 
\[
B_{n}=\begin{cases}
\Delta_{n}/(c_{a}T_{n})^{\frac{1}{\kappa-1}} & \text{if }\beta_{\nu}<\kappa-1;\\
1/nr_{n}^{1/\beta_{\nu}} & \text{if }2>\beta_{\nu}>\kappa-1.
\end{cases}
\]

\textbf{Preliminary estimates:} First, by the mean of Assumption \ref{assumptionlongmem-1},
we can invoke the so-called Potter's bounds (see Theorem 1.5.6 in
\cite{BinghamGoldTeu87}). Such a result provides the existence of
a positive constant only depending on $\varepsilon>0$, such that
for all $0\leq r\leq1$
\begin{equation}
\frac{a(T_{n}s)}{a(T_{n})}\leq C\left(s^{-\alpha-\varepsilon}\lor s^{-\alpha+\varepsilon}\right),\,\,\text{and}\,\,\,\frac{a^{\prime}(r\Delta_{n}+T_{n}s)}{a^{\prime}(T_{n})}\leq C\left(s^{-\kappa-\varepsilon}\lor s^{-\kappa+\varepsilon}\right),\,\,\,s>0.\label{potterboundsfortrawlfunct}
\end{equation}
From Lemma \ref{lemmaaproxZ}, for every $2\geq\theta>\beta_{\nu}$,
we have that for $\left|z\right|>1$
\begin{align}
\left|\psi\left(z\right)\right| & \leq C\left(\left|z\right|^{2}\land\left|z\right|^{\theta}\right),\label{bounchfnctsmallgethoor}
\end{align}
while for $\left|z\right|\leq1$, by the square integrability of $L^{\prime}$
we easily obtain that
\[
\left|\psi\left(z\right)\right|\leq C\left|z\right|^{2}.
\]
On the other, using that $\mathbb{E}\left(\left|L^{\prime}\right|^{2}\right)<\infty$,
it follows that as $x\rightarrow\infty$, $\nu^{\pm}(x)=\mathrm{O}(x^{\theta})$,
for all $\theta\leq2$. Thus, if $\nu^{\pm}(x)\sim\tilde{K}_{\pm}x^{-\beta_{\nu}}$
as $x\rightarrow0^{+}$, for some constants $\tilde{K}_{+}+\tilde{K}_{-}>0$,
then the càdlàg function $\ell(x):=\bar{v}(x)x^{\beta_{\nu}}:=\left[v^{+}(x)+v^{-}(x)\right]x^{\beta_{\nu}}$
is uniformly bounded in $[0,\infty)$. Consequently, by Lemma \ref{lemmaaproxZ},
for any $y>0$, $z\in\mathbb{R}$
\begin{equation}
\left|\psi\left(zy\right)\right|\leq C(\left|z\right|\lor1)^{2}\int_{0}^{1}\bar{v}(x/y)x\mathrm{d}x\leq Cy^{\beta_{\nu}}.\label{boundchfncgethoorbig}
\end{equation}

\textbf{Estimating the characteristic function:} Recall that from
(\ref{eq:decompSn}), the following decomposition holds
\[
\mathcal{C}(z\ddagger S_{n}^{\Delta_{n}})=\sum_{k=1}^{4}\mathcal{C}(z\ddagger S_{n}^{\Delta_{n},k}),\,\,\,n\in\mathbb{N},z\in\mathbb{R}.
\]
From Lemma \ref{lemmaapproxchf} and Assumption \ref{assumptionlongmem-1},
and put for $l=2,3$
\begin{align*}
\mathcal{E}_{1}^{n}(z):=\left|\mathcal{C}\left(z\ddagger S_{n}^{\Delta_{n},1}\right)-I_{n}^{\Delta,1}\left(z\right)\right|\leq & C\left|z\right|^{2}\left(\frac{T_{n}+1}{\Delta_{n}}\right)\\
 & +C\Delta_{n}\int_{0}^{1}\int_{0}^{T_{n}}\left|\psi\left(\frac{sz}{\Delta_{n}}\right)\right|a^{\prime}(r\Delta_{n}+s)\mathrm{d}s\mathrm{d}r;\\
\mathcal{E}_{l}^{n}(z):=\left|\mathcal{C}\left(z\ddagger S_{n}^{\Delta_{n},l}\right)-I_{n}^{\Delta,2}(z)\right|\leq & C\frac{\left|z\right|^{2}}{\Delta_{n}}\int_{0}^{T_{n}}a(s)s\mathrm{d}s.
\end{align*}
Under the conditions stated in \textbf{ii.,} \textbf{(}\ref{boundchfncgethoorbig}\textbf{)
and (}\ref{potterboundsfortrawlfunct}\textbf{) imply that}
\begin{align*}
\mathcal{E}_{1}^{n}\left(B_{n}z\right) & \leq C_{z}\left(\frac{1}{r_{n}^{2/\beta_{\nu}}n}+\frac{1}{nT_{n}r_{n}^{2/\beta_{\nu}}}\right)\\
 & +C\frac{T_{n}a^{\prime}(T_{n})}{na(T_{n})}\int_{0}^{1}\int_{0}^{1}s^{\beta_{\nu}-\kappa-\varepsilon}\mathrm{d}s\mathrm{d}r;\\
\mathcal{E}_{l}^{n}\left(B_{n}z\right) & \leq C\frac{1}{nr_{n}^{2/\beta_{\nu}}T_{n}}\int_{0}^{T_{n}}sa(s)\mathrm{d}s.
\end{align*}
Given that $(1-\alpha)(2-\beta_{\nu})+\beta_{\nu}>0$, it is easy
to see that $r_{n}^{(2-\beta_{\nu})/\beta_{\nu}}T_{n}\rightarrow\infty$
as $n\rightarrow\infty$. KT then now asserts that $\mathcal{E}_{l}^{n}\left(B_{n}z\right)\rightarrow0$
for $l=2,3$. Furthermore, since $\beta_{\nu}>\kappa-1$, we have
that $2(1-\alpha)+\beta_{\nu}>0$, meaning this that $r_{n}^{2/\beta_{\nu}}T_{n}\rightarrow\infty$,
in other words $\mathcal{E}_{1}^{n}\left(B_{n}z\right)$ is negligible
when $n$ tends to $+\infty$. On the other hand, in the situation
in which $\beta_{\nu}<\kappa-1$, KT and (\ref{bounchfnctsmallgethoor})
imply that
\begin{align*}
\mathcal{E}_{1}^{n}\left[B_{n}z\right] & \leq C\frac{\Delta_{n}}{T_{n}^{\frac{3-\kappa}{\kappa-1}}}+C\Delta_{n}\int_{0}^{1}\int_{0}^{T_{n}}\left(\left|s/T_{n}^{\frac{1}{\kappa-1}}\right|^{2}\land\left|s/T_{n}^{\frac{1}{\kappa-1}}\right|^{\theta}\right)a^{\prime}(r\Delta_{n}+s)\mathrm{d}s\mathrm{d}r;\\
\mathcal{E}_{l}^{n}\left[B_{n}z\right] & \leq C\frac{\Delta_{n}a(T_{n})T_{n}^{2}}{T_{n}^{\frac{2}{\kappa-1}}}\mathrm{O}(1).
\end{align*}
Using that $(3-\kappa)(\kappa-1)<2$ and KT, results in $\mathcal{E}_{l}^{n}\left[B_{n}z\right]$
vanishing as $n\rightarrow\infty.$ Moreover, the change of variable
$x=s/T_{n}^{\frac{1}{\kappa-1}}$ and using (\ref{potterboundsfortrawlfunct})
we deduce that for $\kappa-1>\theta>\beta_{\nu}$ and $0<\varepsilon<(3-\kappa)\land(\kappa-1-\theta)$
\begin{equation}
\mathcal{E}_{1}^{n}\left[B_{n}z\right]\leq C\left(\int_{0}^{\infty}\left(\left|s\right|^{2}\land\left|s\right|^{\theta}\right)s^{-\kappa-\varepsilon}\mathrm{d}s\right)a^{\prime}(T_{n}^{\frac{1}{\kappa-1}})T_{n}^{\frac{1}{\kappa-1}}\rightarrow0,\,\,\,n\rightarrow\infty,\label{bound1}
\end{equation}
we we have used Assumption \ref{assumptionlongmem-1}. 

\textbf{Convergence in i.:} So far we have shown that 

\begin{align*}
\mathcal{C}(z\ddagger B_{n}S_{n}^{\Delta_{n},1}) & =I_{n}^{\Delta,1}\left(B_{n}z\right)+\mathrm{o}(1);\\
\mathcal{C}(z\ddagger B_{n}S_{n}^{\Delta_{n},l}) & =I_{n}^{\Delta,2}\left(B_{n}z\right)+\mathrm{o}(1).
\end{align*}

We now proceed to show that the convergence in \textbf{i.} holds.
We start by verifying that for $l=2,3,4,$ $S_{n}^{\Delta_{n},l}$
are negligible when $\beta_{\nu}<\kappa-1$. Indeed, from (\ref{bounchfnctsmallgethoor})
and Assumption \ref{assumptionlongmem-1}, for all $z\neq0$, $\kappa-1>\theta>\beta_{\nu}$,
and $n$ large enough
\[
\left|\mathcal{C}\left(z\ddagger B_{n}S_{n}^{\Delta_{n},4}\right)\right|=\left|\psi\left(zT^{\frac{\kappa-2}{\kappa-1}}\right)\int_{T_{n}}^{\infty}a(s)\mathrm{d}s\right|\leq CT^{\theta\frac{\kappa-2}{\kappa-1}}T_{n}^{2-\kappa}\rightarrow0.
\]
Reasoning as in (\ref{bound1}), we deduce from (\ref{potterboundsfortrawlfunct})
that for $n$ enough large and $4-\kappa>\varepsilon>0$
\begin{align}
\left|I_{n}^{\Delta,2}\left(B_{n}z\right)\right| & \leq CT_{n}^{\frac{1}{\kappa-1}}\int_{0}^{T_{n}^{\frac{\kappa-2}{\kappa-1}}}\left(\left|s\right|^{2}\land\left|s\right|^{\theta}\right)a(T_{n}^{\frac{1}{\kappa-1}}s)\mathrm{d}s\label{negliI2smallBG}\\
 & \leq\mathrm{o}(1)+T_{n}^{-1}\int_{1}^{T_{n}^{\frac{\kappa-2}{\kappa-1}}}\left|s\right|^{\theta-\kappa+1+\varepsilon}\mathrm{d}s\nonumber \\
 & \leq\mathrm{o}(1)+CT_{n}^{\frac{(\kappa-2)(\theta-\kappa+2+\varepsilon)-(\kappa-1)}{\kappa-1}}.\nonumber 
\end{align}
Since $\theta-\kappa+2+\varepsilon<1+\varepsilon$, then by choosing
$\varepsilon<1/(\kappa-2)$, we obtain from above that $I_{n}^{\Delta,2}\left(B_{n}z\right)\rightarrow0$.
Using the previous reasoning, we conclude that in this case 
\[
B_{n}\mathbf{S}_{t}^{n}=B_{n}S_{\left[nt\right]}^{\Delta_{n}}+\mathrm{o}_{\mathbb{P}}(1).
\]
Furthermore, in view of (\ref{IncremS11}) and (\ref{IncremS12}),
$(S_{\left[nt\right]}^{\Delta_{n}})_{t\geq0}$ has independent increments.
From this observation and the stationarity of $X$, in order to finish
the proof, we only need to check that as $n\rightarrow\infty$
\[
\mathcal{C}\left(z\ddagger B_{n}S_{\left[nt\right]}^{\Delta_{n}}\right)\rightarrow\mathcal{C}(z\ddagger Y_{t}),
\]
where $Y$ as in the theorem. For simplicity we let $t=1$. Suppose
now that $z>0$, then by doing the change of variables $x=(sz)/(c_{a}T_{n})^{\frac{1}{\kappa-1}}$
we see that
\[I_{n}^{\Delta,1}\left(B_{n}z\right)\]
\[
=z^{-1}c_{a}^{^{\frac{1}{\kappa-1}}}a^{\prime}(T_{n}^{\frac{1}{\kappa-1}})T_{n}^{\frac{\kappa}{\kappa-1}}\int_{0}^{1}\int_{0}^{T_{n}^{\frac{\kappa-2}{\kappa-1}}zc_{a}^{1/(\kappa-1)}}\left[1-x\left(T_{n}^{\frac{\kappa-2}{\kappa-1}}zc_{a}^{1/(\kappa-1)}\right)^{-1}\right]\psi(x)\frac{a^{\prime}\left[r\Delta_{n}+\frac{(c_{a}T_{n})^{\frac{1}{\kappa-1}}}{z}x\right]}{a^{\prime}(T_{n}^{\frac{1}{\kappa-1}})}\mathrm{d}x\mathrm{d}r.
\]
Using a similar argument as in (\ref{negliI2smallBG}) shows that
\[
\left|\left[1-x\left(T_{n}^{\frac{\kappa-2}{\kappa-1}}zc_{a}^{1/(\kappa-1)}\right)^{-1}\right]\psi(x)\frac{a^{\prime}\left[r\Delta_{n}+\frac{(c_{a}T_{n})^{\frac{1}{\kappa-1}}}{z}\right]}{a^{\prime}(T_{n}^{\frac{1}{\kappa-1}})}\right|\leq C\left(\left|s\right|^{2}\land\left|s\right|^{\theta}\right)s^{-\kappa-\varepsilon},
\]
which for $\varepsilon$ small enough, allow us to apply the Dominated
Convergence Theorem in order to get that 
\[
I_{n}^{\Delta,1}\left(B_{n}z\right)\rightarrow z^{\kappa-1}\int_{0}^{\infty}\psi(x)x^{-\kappa}\mathrm{d}x.
\]
The result now follows from Fubini's Theorem and the relation (see
Lemma 14.11 in \cite{Sato99})
\[
\int_{0}^{\infty}(e^{\pm\mathbf{i}r}\pm\mathbf{i}r-1)r^{-\varpi-1}\mathrm{d}\theta=\frac{\Gamma(2-\varpi)}{(\varpi-1)\varpi}e^{\mp\mathbf{i}\frac{\pi\varpi}{2}},\,\,\,\,1<\varpi<2.
\]

\textbf{Convergence of ii.:} \textbf{We conclude the proof by showing
that ii. }holds, so for the rest of the proof we will assume that
$\nu^{\pm}(x)\sim\tilde{K}_{\pm}x^{-\beta_{\nu}}$ as $x\rightarrow0^{+}$
and that $1>\beta_{\nu}>\kappa-1$. From the first and second part
of the proof, it is enough to show that in this situation
\begin{align}
I_{n}^{\Delta,1}\left(B_{n}z\right) & \rightarrow(\kappa-1)\int_{0}^{1}(1-s)s^{\beta_{\nu}-\kappa}\mathrm{d}s\psi_{\beta_{\nu}}(z);\label{limitchfs1}\\
I_{n}^{\Delta,2}\left(B_{n}z\right) & \rightarrow\int_{0}^{1}s^{\beta-\kappa+1}\mathrm{d}s\psi_{\beta_{\nu}}(z),\,\,l=2,3;\label{limitchfs2,3}\\
\mathcal{C}\left(z\ddagger B_{n}S_{n}^{\Delta_{n},4}\right) & \rightarrow\frac{1}{\kappa-2}\psi_{\beta_{\nu}}(z).\label{limitchfs4}
\end{align}
From Assumption \ref{assumptionlongmem-1} $a\in\mathrm{RV_{\alpha}^{\infty}},$
from which it follows easily that $r_{n}\rightarrow0$ as $n\rightarrow\infty$.
Thus, by KT and (\ref{chfctat0}), we deduce that as $n\rightarrow\infty$
\[
\mathcal{C}\left(z\ddagger B_{n}S_{n}^{\Delta_{n},4}\right)=r_{n}\psi\left(\frac{z}{r_{n}^{1/\beta_{\nu}}}\right)\frac{1}{r_{n}}\int_{T_{n}}^{\infty}a(s)\mathrm{d}s\rightarrow\frac{1}{\kappa-2}\psi_{\beta_{\nu}}(z),\,\,\,z\in\mathbb{R},
\]
On the other hand, by doing the change of variables $x=s/T_{n}$ we
get that 
\begin{align*}
I_{n}^{\Delta,1}\left(B_{n}z\right) & =\frac{T_{n}a^{\prime}(T_{n})}{a(T_{n})}\int_{0}^{1}\int_{0}^{1}(1-s)r_{n}\psi\left(\frac{zs}{r_{n}^{1/\beta_{\nu}}}\right)\frac{a^{\prime}(T_{n}(r/n+s))}{a^{\prime}(T_{n})}\mathrm{d}s\mathrm{d}r,\\
I_{n}^{\Delta,2}\left(B_{n}z\right)= & \int_{0}^{1}r_{n}\psi\left(\frac{zs}{r_{n}^{1/\beta_{\nu}}}\right)\frac{a(T_{n}s)}{a(T_{n})}\mathrm{d}s,
\end{align*}
From (\ref{chfctat0}) and the fact that $a^{\prime}\in\mathrm{RV_{\kappa}^{\infty}},$it
follows that for $0<r,s\leq1$ as $n\rightarrow\infty$
\begin{align*}
\frac{T_{n}a^{\prime}(T_{n})}{a(T_{n})}r_{n}\psi\left(\frac{zs}{r_{n}^{1/\beta_{\nu}}}\right)\frac{a^{\prime}(T_{n}(r/n+s))}{a^{\prime}(T_{n})} & \rightarrow(\kappa-1)\psi_{\beta_{\nu}}(zs)s^{-\kappa},\\
r_{n}\psi\left(\frac{zs}{r_{n}^{1/\beta_{\nu}}}\right)\frac{a(T_{n}s)}{a(T_{n})}\rightarrow & \psi_{\beta_{\nu}}(zs)s^{-\kappa+1}.
\end{align*}
Moreover, from (\ref{potterboundsfortrawlfunct}) and (\ref{boundchfncgethoorbig})
we infer that 
\begin{align*}
\left|r_{n}\psi\left(\frac{zs}{r_{n}^{1/\beta_{\nu}}}\right)\frac{a^{\prime}(T_{n}(r/n+s))}{a^{\prime}(T_{n})}\right| & \leq Cs^{\beta_{\nu}-\kappa-\varepsilon},\\
\left|r_{n}\psi\left(\frac{zs}{r_{n}^{1/\beta_{\nu}}}\right)\frac{a(T_{n}s)}{a(T_{n})}\right| & \leq Cs^{\beta_{\nu}+1-\kappa-\varepsilon}.
\end{align*}
Therefore, (\ref{limitchfs1}) and (\ref{limitchfs2,3}) now follows
by letting $\beta_{\nu}-\kappa+1>\varepsilon>0$ and an application
of the Dominated Convergence Theorem.

\end{proof}

\subsection{Proof of Theorem \ref{theoremmean-2-1}}

\begin{proof}[Proof of Theorem \ref{theoremmean-2-1}]We will only
show that \textbf{i.} holds, since the proof of \textbf{ii. }is identical
as the proof of ii. \textbf{in Theorem }\ref{theoremmean3}. Observe
that by the strict stability of $L$ and KT for all $z\in\mathbb{R}$
and $l=2,3,4$
\[
\mathcal{C}\left(z\ddagger\frac{\Delta_{n}}{T_{n}^{1/\beta}}S_{n}^{\Delta_{n},l}\right)\leq CT_{n}^{\beta}a(T_{n})\mathrm{O}(1)\rightarrow0,\,\,\,n\rightarrow\infty.
\]
This implies that $\frac{\Delta_{n}}{T_{n}^{1/\beta}}S_{\left[tn\right]}^{\Delta_{n}}=\frac{\Delta_{n}}{T_{n}^{1/\beta}}S_{\left[tn\right]}^{\Delta_{n},1}+\mathrm{o}_{\mathbb{P}}$.
Therefore, as in the proof of \textbf{Theorem }\ref{theoremmean3},
we only need to show that as $n\rightarrow\infty$
\[
\mathcal{C}\left(z\ddagger B_{n}S_{\left[nt\right]}^{\Delta_{n}}\right)\rightarrow\mathcal{C}(z\ddagger Y_{t}),\,\,\,t\geq0.
\]
To see this, first note that by Assumption \ref{assumptionlongmem-1}
$\int_{0}^{\infty}s^{\beta}a^{\prime}(s)\mathrm{d}s<\infty$ and 

\begin{align*}
\mathcal{C}\left(z\ddagger\frac{\Delta_{n}}{T_{n}^{1/\beta}}S_{\left[tn\right]}^{\Delta_{n},1}\right) & =\psi\left(z\right)\int_{0}^{1}\sum_{j=1}^{\left[nt\right]-1}\int_{t_{j}}^{t_{j+1}}\left(\frac{\left[nt\right]}{n}-\frac{s}{T_{n}}\right)t_{j}^{\beta}a^{\prime}(r\Delta_{n}+s)\mathrm{d}s\mathrm{d}r\\
 & +\psi\left(z\right)\frac{1}{T_{n}}\int_{0}^{1}\sum_{j=1}^{\left[nt\right]-1}\int_{t_{j}}^{t_{j+1}}\left(s-t_{j}\right)t_{j}^{\beta}a^{\prime}(r\Delta_{n}+s)\mathrm{d}s\mathrm{d}r.
\end{align*}
in which
\[
\left|\frac{1}{T_{n}}\int_{0}^{1}\sum_{j=1}^{\left[nt\right]-1}\int_{t_{j}}^{t_{j+1}}\left(s-t_{j}\right)t_{j}^{\beta}a^{\prime}(r\Delta_{n}+s)\mathrm{d}s\mathrm{d}r\right|\leq\frac{1}{n}\int_{0}^{\infty}s^{\beta}a^{\prime}(s)\mathrm{d}s\rightarrow0,
\]
as well as for $t_{j}\leq s\leq t_{j+1}$ and $0\leq r\leq1$
\[
\left|\left(\frac{\left[nt\right]}{n}-\frac{s}{T_{n}}\right)t_{j}^{\beta}a^{\prime}(r\Delta_{n}+s)\mathrm{d}s\mathrm{d}r\right|\leq s^{\beta}a^{\prime}(r\Delta_{n}+s).
\]
Hence, the Generalized Dominated Convergence Theorem asserts that
\[
\mathcal{C}\left(z\ddagger\frac{\Delta_{n}}{T_{n}^{1/\beta}}S_{\left[tn\right]}^{\Delta_{n},1}\right)\rightarrow t\psi\left(z\right)\int_{0}^{\infty}s^{\beta}a^{\prime}(s)\mathrm{d}s,
\]
as required.

\end{proof}
\subsection{Proof of Theorem \ref{TheSpecific}}
\begin{proof}[Proof of Theorem \ref{TheSpecific}]
	Consider the cumulant of the finite dimensional distribution of $\frac{1}{\sqrt{n}}\left(X_{t}^{(n)}-\mathbb{E}[X_{t}^{(n)}]\right)$,
	\begin{equation*}
	\log\mathbb{E}\left(\exp\left(i\sum_{j=1}^{k}z_{j}\frac{1}{\sqrt{n}}\left(X^{(n)}_{t_{j}}-\mathbb{E}[X^{(n)}_{t_{j}}]\right)\right)\right)=\int_{\mathbb{R}} \int_{\mathbb{R}} \bigg(-\frac{1}{2}\frac{(b^{(n)})^{2}}{n}\bigg(\sum_{j=1}^{k}z_{j}\textbf{I}_{A_{t_{j}}}(y,s)\bigg)^{2}
	\end{equation*}
	\begin{equation*}
	+\int_{\mathbb{R}}e^{i\frac{1}{\sqrt{n}}x\sum_{j=1}^{k}z_{j}\textbf{I}_{A_{t_{j}}}(y,s)}-1 -i\textbf{I}_{[-1,1]}(x)\frac{1}{\sqrt{n}}x \sum_{j=1}^{k}z_{j}\textbf{I}_{A_{t_{j}}}(y,s) \nu^{(n)}(dx)
	\end{equation*}
	\begin{equation*}
	-i\int_{|x|>1}x \frac{1}{\sqrt{n}}\sum_{j=1}^{k}z_{j}\textbf{I}_{A_{t_{j}}}(y,s)\nu^{(n)}(dx)\bigg)dsdy
	\end{equation*}
	\begin{equation*}
	=-\int_{\mathbb{R}} \int_{\mathbb{R}} \frac{1}{2}\frac{(b^{(n)})^{2}}{n}\bigg(\sum_{j=1}^{k}z_{j}\textbf{I}_{A_{t_{j}}}(y,s)\bigg)^{2}dsdy-\int_{\mathbb{R}}\int_{\mathbb{R}}\int_{\mathbb{R}}\frac{x^{2}}{n}\bigg(\sum_{j=1}^{k}z_{j}\textbf{I}_{A_{t_{j}}}(y,s)\bigg)^{2}\nu^{(n)}(dx)dsdy
	\end{equation*}
	\begin{equation*}
	+\int_{\mathbb{R}}\int_{\mathbb{R}}\int_{\mathbb{R}}e^{i\frac{1}{\sqrt{n}}  x \sum_{j=1}^{k}z_{j}\textbf{I}_{A_{t_{j}}}(y,s)}-1-i\frac{1}{\sqrt{n}} x\sum_{j=1}^{k}z_{j}\textbf{I}_{A_{t_{j}}}(y,s) 
	+\frac{1}{2}\frac{x^{2}}{n}\bigg(\sum_{j=1}^{k}z_{j}\textbf{I}_{A_{t_{j}}}(y,s)\bigg)^{2}\nu^{(n)}(dx)dsdy.
	\end{equation*}
	It is possible to observe that the last addendum is bounded by
	\begin{equation*}
	\int_{\mathbb{R}}\int_{\mathbb{R}}\int_{\mathbb{R}}\frac{\left(\sum_{j=1}^{k}z_{j}\textbf{I}_{A_{t_{j}}}(y,s)\right)^{3}}{6}\frac{|x|^{3}}{n\sqrt{n}}\nu^{(n)}(dx)dsdy=C \int_{\mathbb{R}}\frac{|x|^{3}}{n\sqrt{n}}\nu^{(n)}(dx),
	\end{equation*}
	where $C$ is a positive constant. Moreover, notice that
	\begin{equation*}
	\int_{\mathbb{R}}\int_{\mathbb{R}}\int_{\mathbb{R}}\frac{\left(\sum_{j=1}^{k}z_{j}\textbf{I}_{A_{t_{j}}}(y,s)\right)^{2}}{2}\frac{x^{2}}{n}\nu^{(n)}(dx)dsdy=\frac{1}{2}\sum_{j,l=1}^{k}z_{j}z_{l}Leb(A_{t_{j}}\cap A_{t_{l}})\int_{\mathbb{R}}\frac{x^{2}}{n}\nu^{(n)}(dx)
	\end{equation*}
	\begin{equation*}
	\rightarrow \frac{1}{2}\sum_{j,l=1}^{k}z_{j}z_{l}Leb(A_{t_{j}}\cap A_{t_{l}}), \quad\quad\textnormal{as $n\rightarrow\infty$.}
	\end{equation*}
	Thus, we have $Leb(A_{t_{j}}\cap A_{t_{l}})=Leb(A_{t_{j}-t_{l}}\cap A_{0})=\int_{0}^{\infty}a(t_{j}+t_{l}+s)ds=\int_{-\infty}^{\min(t_{j},t_{l})}g(t_{j}-s)g(t_{l}-s)ds$ for $j,l=1,...,k$.
	
	Therefore, in general terms we have the following condition to satisfy
	\begin{equation*}
	\int_{0}^{\infty}a(h+s)ds=\int_{-\infty}^{0}g(h-s)g(-s)ds.
	\end{equation*}
	Notice that
	\begin{equation*}
	\int_{0}^{\infty}a(h+s)ds=\int_{h}^{\infty}a(s)ds\quad\textnormal{and}\quad \int_{-\infty}^{0}g(h-s)g(-s)ds=\int_{0}^{\infty}g(h+s)g(s)ds,
	\end{equation*}
	by the fundamental theorem of calculus we have
	\begin{equation*}
	-a(h)=\frac{d}{dh}\int_{h}^{\infty}a(s)ds=\frac{d}{dh}\int_{0}^{\infty}g(h+s)g(s)ds.
	\end{equation*}
	 In this step we prove that the sequence of probability laws generated by our process is tight. Recall that from Theorem 13.5 in \cite{Billingsley99} in order to prove tightness is sufficient to prove that for any $t\geq s\geq r\in\mathbb{R}$, $n\geq 1$
	 \begin{equation*}
	 \frac{1}{n^{\beta}}\mathbb{E}\left[\left|X_{t}^{(n)}-\mathbb{E}[X_{t}^{(n)}]-X_{s}^{(n)}+\mathbb{E}[X_{s}^{(n)}]\right|^{\beta}\left|X_{s}^{(n)}-\mathbb{E}[X_{s}^{(n)}]-X_{r}^{(n)}+\mathbb{E}[X_{r}^{(n)}]\right|^{\beta}\right]\leq (F(t)-F(r))^{\alpha},
	 \end{equation*}
	 where $F$ is a non-decreasing continuous function, $\beta>1$ and $\alpha>1$. We will see now how we split the trawl processes into independent processes. First, define (see Fig.~1)
	 \begin{figure}\label{Fig1}
	 	\centerline{\includegraphics[scale=1]{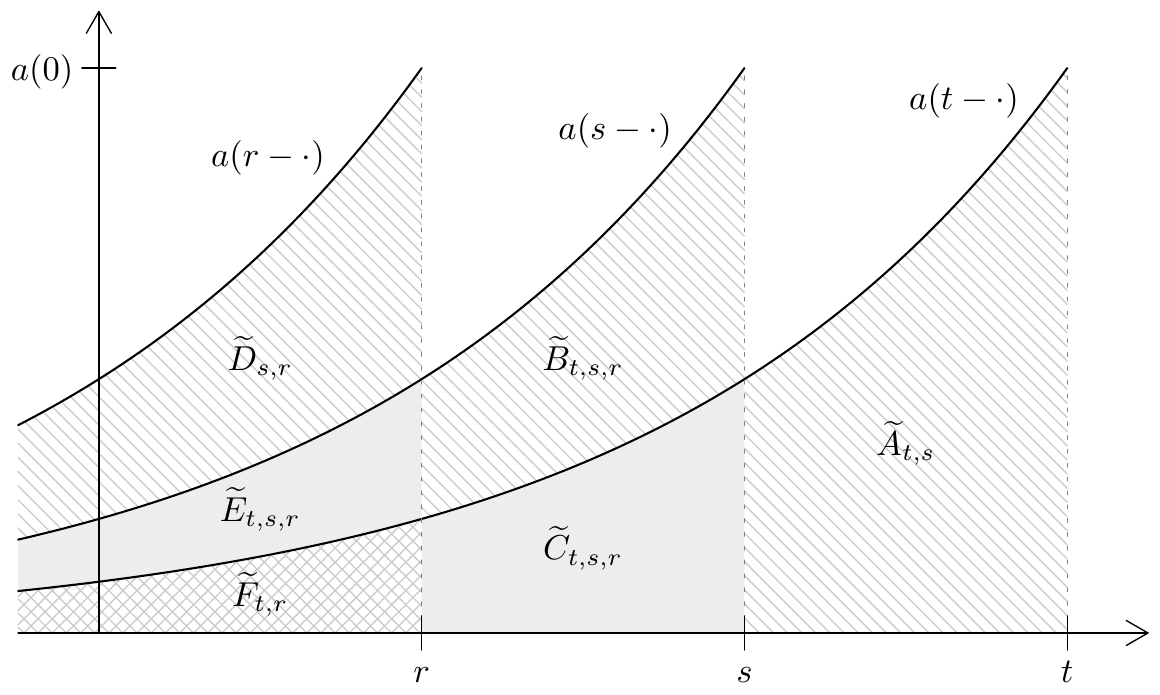}}
	 	\caption{\small Separation of \mikko{overlapping} trawl sets into disjoint sets.}
	 \end{figure}
	 \begin{equation*}
	 \tilde{A}_{t,s}:=A_{t}\setminus A_{s} ,\quad\tilde{B}_{t,s,r}:=A_{s}\setminus A_{t}\setminus A_{r},\quad \tilde{C}_{t,s,r}:=A_{s}\setminus A_{r}\setminus \tilde{B}_{t,s,r},
	 \end{equation*}
	 \begin{equation*}
	 \tilde{D}_{s,r}:=A_{r}\setminus A_{s} , \quad \tilde{E}_{t,s,r}:=A_{s}\setminus A_{t}\setminus \tilde{B}_{t,s,r}\quad\tilde{F}_{t,r}:= A_{t}\cap A_{r}.
	 \end{equation*}
	 Then we have the following almost sure equalities
	 \begin{equation*}
	 X_{t}^{(n)}-X_{s}^{(n)}=L^{(n)}(\tilde{A}_{t,s})-L^{(n)}(\tilde{B}_{t,s,r})-L^{(n)}(\tilde{E}_{t,s,r}),
	 \end{equation*}
	 \begin{equation*}
	 \textnormal{and}\quad X_{s}^{(n)}-X_{r}^{(n)}=L^{(n)}(\tilde{B}_{t,s,r})+ L^{(n)}(\tilde{C}_{t,s,r})-L^{(n)}(\tilde{D}_{s,r}).
	 \end{equation*}
	 Hence, regarding tightness, using the fact that $L$ is an independently scattered random measure, we have
	 \begin{equation*}
	 \frac{1}{n^{2}}\mathbb{E}\left[(X_{t}^{(n)}-\mathbb{E}[X_{t}^{(n)}]-X_{s}^{(n)}-\mathbb{E}[X_{s}^{(n)}])^{2}(X_{s}^{(n)}-\mathbb{E}[X_{s}^{(n)}]-X_{r}^{(n)}-\mathbb{E}[X_{r}^{(n)}])^{2}\right]
	 \end{equation*}
	 \begin{equation*}
	 =\frac{9}{n^{2}}\bigg\{Var\left(L^{(n)}(\tilde{A}_{t,s})\right)Var\left(L^{(n)}(\tilde{C}_{t,s,r})\right)+Var\left(L^{(n)}(\tilde{A}_{t,s})\right)Var\left(L^{(n)}(\tilde{D}_{s,r})\right)
	 \end{equation*}
	 \begin{equation*}
	 +Var\left(L^{(n)}(\tilde{A}_{t,s})\right)Var\left(L^{(n)}(\tilde{B}_{t,s,r})\right)+Var\left(L^{(n)}(\tilde{E}_{t,s,r})\right)Var\left(L^{(n)}(\tilde{C}_{t,s,r})\right)
	 \end{equation*}
	 \begin{equation*}
	 +Var\left(L^{(n)}(\tilde{E}_{t,s,r})^{2}\right)Var\left(L^{(n)}(\tilde{D}_{s,r})\right)
	 +Var\left(L^{(n)}(\tilde{E}_{t,s,r})\right)Var\left(L^{(n)}(\tilde{B}_{t,s,r})\right)
	 \end{equation*}
	 \begin{equation*}
	 +Var\left(L^{(n)}(\tilde{B}_{t,s,r})\right)Var\left(L^{(n)}(\tilde{C}_{t,s,r})\right)+Var\left(L^{(n)}(\tilde{B}_{t,s,r})\right)Var\left(L^{(n)}(\tilde{D}_{s,r})\right)
	 \end{equation*}
	 \begin{equation}\label{long}
	 +\mathbb{E}\left[\Big(L^{(n)}(\tilde{B}_{t,s,r})-\mathbb{E}[L^{(n)}(\tilde{B}_{t,s,r})]\Big)^{4}\right]\bigg\}.
	 \end{equation}
	 Let us concentrate first on
	 \begin{equation*}
	 Var\left(L^{(n)}(\tilde{A}_{t,s})\right)Var\left(L^{(n)}(\tilde{D}_{s,r})\right).
	 \end{equation*}
	 We have
	 \begin{equation*}
	 Var\left(L^{(n)}(\tilde{A}_{t,s})\right)Var\left(L^{(n)}(\tilde{D}_{s,r})\right)=\frac{9}{n^{2}}\left((b^{(n)})^{2}+\int_{\mathbb{R}}x^{2}\nu^{(n)}(dx)\right)^{2} Leb(\tilde{A}_{t,s})Leb(\tilde{D}_{s,r})
	 \end{equation*}
	 \begin{equation*}
	 \leq C \int_{0}^{t-s}a(p)dp \int_{0}^{s-r}a(p)dp
	 \end{equation*}
	 	where we used that $\frac{(b^{(n)})^{2}}{n}\rightarrow0$, that $\int_{\mathbb{R}}\frac{x^{2}}{n}\nu^{(n)}(dx)\rightarrow 1$ as $n\rightarrow\infty$, and that, due to the homogeneity of the L\'{e}vy basis and the monotonicity of $a$, we have
	 \begin{equation*}
	 Leb(\tilde{D}_{s,r})=Leb(A_{r}\setminus A_{s})=Leb(A_{s}\setminus A_{r})= \int_{0}^{s-r}a(p)dp.
	 \end{equation*}
	 Now by Assumption \ref{A1} we have
	 \begin{equation*}
	 C \int_{0}^{t-s}a(p)dp \int_{0}^{s-r}a(p)dp\leq C' (t-s)^{\frac{1}{2}+\frac{\epsilon}{2}} (s-r)^{\frac{1}{2}+\frac{\epsilon}{2}}=C''(t-r)^{1+\epsilon}.
	 \end{equation*}
	 The same arguments applies to all the other summands in $(\ref{long})$ except 
	 \begin{equation*}
	 \frac{9}{n^{2}}\mathbb{E}\left[\Big(L^{(n)}(\tilde{B}_{t,s,r})-\mathbb{E}[L^{(n)}(\tilde{B}_{t,s,r})]\Big)^{4}\right].
	 \end{equation*}
	 From the Appendix, we have that
	 \begin{equation*}
	 \mathbb{E}\left[\Big(L^{(n)}(\tilde{B}_{t,s,r})-\mathbb{E}[L^{(n)}(\tilde{B}_{t,s,r})]\Big)^{4}\right]=Leb(\tilde{B}_{t,s,r})^{2} \left((b^{(n)})^{2}+\int_{\mathbb{R}}x^{2}\nu^{(n)}(dx)\right)^{2}+Leb(\tilde{B}_{t,s,r}) \left(\int_{\mathbb{R}} x^{4}\nu^{(n)}(dx)\right).
	 \end{equation*}
	 Moreover, by the Assumption \ref{A1} on $C_{T}$ we have $Leb(\tilde{B}_{t,s,r})\geq \left(Leb(\tilde{B}_{t,s,r})\right)^{\beta}$ for any integer $\beta\geq1$, and by assumption of the theorem we have
	 \begin{equation*}
	 \frac{9}{n^{2}}\left[\left((b^{(n)})^{2}+\int_{\mathbb{R}}x^{2}\nu^{(n)}(dx)\right)^{2}+\int_{\mathbb{R}}x^{4}\nu^{(n)}(dx)\right]< K,
	 \end{equation*}
	 for some positive constant $K$. Thus, we obtain that
	 \begin{equation*}
	 \mathbb{E}\left[\Big(L^{(n)}(\tilde{B}_{t,s,r})-\mathbb{E}[L^{(n)}(\tilde{B}_{t,s,r})]\Big)^{4}\right]\leq KLeb(\tilde{B}_{t,s,r}).
	 \end{equation*}
	 Then, by Assumption \ref{A1} we deduce that
	 \begin{equation*}
	 \mathbb{E}\left[\Big(L^{(n)}(\tilde{B}_{t,s,r})-\mathbb{E}[L^{(n)}(\tilde{B}_{t,s,r})]\Big)^{4}\right]
	 \leq K'(t-r)^{1+\epsilon},
	 \end{equation*}
	 for some positive constant $K'$. Therefore, we conclude that
	 \begin{equation*}
	 	\frac{1}{n^{2}}\mathbb{E}\left[(X_{t}^{(n)}-\mathbb{E}[X_{t}^{(n)}]-X_{s}^{(n)}-\mathbb{E}[X_{s}^{(n)}])^{2}(X_{s}^{(n)}-\mathbb{E}[X_{s}^{(n)}]-X_{r}^{(n)}-\mathbb{E}[X_{r}^{(n)}])^{2}\right]\leq C(t-r)^{1+\epsilon}.
	 \end{equation*}
\end{proof}
\section*{Appendix: The fourth moment of the trawl process}
\begin{equation*}
\mathbb{E}(X_{t}^{4})=\frac{\partial^{4}}{\partial z^{4}}\exp\left(\int_{\mathbb{R}}\int_{\mathbb{R}}\log\mathbb{E}\left(e^{iz \textbf{I}_{A_{t}}(s) L'(x)}\right)dsdx\right)\bigg|_{z=0}
\end{equation*}
\begin{equation*}
=\frac{\partial^{4}}{\partial z^{4}}\exp\left(\int_{\mathbb{R}}\int_{\mathbb{R}}\left(iz\textbf{I}_{A_{t}}(x,s)\gamma-\frac{1}{2}b^{2}\textbf{I}_{A_{t}}(x,s)z^{2}+\int_{\mathbb{R}}e^{iz\textbf{I}_{A_{t}}(x,s) y}-1 -i\textbf{I}_{[-1,1]}(y)\textbf{I}_{A_{t}}(x,s)z y\nu(dy)\right)dsdx\right)\bigg|_{z=0}
\end{equation*}
\begin{equation*}
=\frac{\partial^{3}}{\partial z^{3}}\exp\left(\int_{\mathbb{R}}\int_{\mathbb{R}}\left(iz\textbf{I}_{A_{t}}(x,s)\gamma-\frac{1}{2}b^{2}\textbf{I}_{A_{t}}(x,s)z^{2}+\int_{\mathbb{R}}e^{iz\textbf{I}_{A_{t}}(x,s) y}-1 -i\textbf{I}_{[-1,1]}(y)\textbf{I}_{A_{t}}(x,s)z y\nu(dy)\right)dxds\right)
\end{equation*}
\begin{equation*}
\int_{\mathbb{R}}\int_{\mathbb{R}}\left(i\textbf{I}_{A_{t}}(x,s)\gamma-b^{2}\textbf{I}_{A_{t}}(x,s)z+\int_{\mathbb{R}}i \textbf{I}_{A_{t}}(x,s)y e^{iz \textbf{I}_{A_{t}}(x,s) y}-i\textbf{I}_{[-1,1]}(y)\textbf{I}_{A_{t}}(x,s)y\nu(dy)\right)dxds\bigg|_{z=0}
\end{equation*}
\begin{equation*}
=\frac{\partial^{2}}{\partial z^{2}}\exp\left(\int_{\mathbb{R}}\int_{\mathbb{R}}\left(iz\textbf{I}_{A_{t}}(x,s)\gamma-\frac{1}{2}b^{2}\textbf{I}_{A_{t}}(x,s)z^{2}+\int_{\mathbb{R}}e^{iz\textbf{I}_{A_{t}}(x,s) y}-1 -i\textbf{I}_{[-1,1]}(y)\textbf{I}_{A_{t}}(x,s)z y\nu(dy)\right)dxds\right)
\end{equation*}
\begin{equation*}
\left(\int_{\mathbb{R}}\int_{\mathbb{R}}\left(i\textbf{I}_{A_{t}}(x,s)\gamma-b^{2}\textbf{I}_{A_{t}}(x,s)z+\int_{\mathbb{R}}i \textbf{I}_{A_{t}}(x,s)y e^{iz \textbf{I}_{A_{t}}(x,s) y}-i\textbf{I}_{[-1,1]}(y)\textbf{I}_{A_{t}}(x,s)y\nu(dy)\right)dxds\right)^{2}\bigg|_{z=0}
\end{equation*}
\begin{equation*}
+ \frac{\partial^{2}}{\partial z^{2}}\exp\left(\int_{\mathbb{R}}\int_{\mathbb{R}}\left(iz\textbf{I}_{A_{t}}(x,s)\gamma-\frac{1}{2}b^{2}\textbf{I}_{A_{t}}(x,s)z^{2}+\int_{\mathbb{R}}e^{iz\textbf{I}_{A_{t}}(x,s) y}-1 -i\textbf{I}_{[-1,1]}(y)\textbf{I}_{A_{t}}(x,s)z y\nu(dy)\right)dxds\right)
\end{equation*}
\begin{equation*}
\int_{\mathbb{R}}\int_{\mathbb{R}}\left(-b^{2}\textbf{I}_{A_{t}}(x,s)-\int_{\mathbb{R}}\textbf{I}_{A_{t}}(x,s)y^{2} e^{iz \textbf{I}_{A_{t}}(x,s) y}\nu(dy)\right)dxds\bigg|_{z=0}
\end{equation*}
\begin{equation*}
=\frac{\partial}{\partial z}\exp\left(\int_{\mathbb{R}}\int_{\mathbb{R}}\left(iz\textbf{I}_{A_{t}}(x,s)\gamma-\frac{1}{2}b^{2}\textbf{I}_{A_{t}}(x,s)z^{2}+\int_{\mathbb{R}}e^{iz\textbf{I}_{A_{t}}(x,s) y}-1 -i\textbf{I}_{[-1,1]}(y)\textbf{I}_{A_{t}}(x,s)z y\nu(dy)\right)dxds\right)
\end{equation*}
\begin{equation*}
\left(\int_{\mathbb{R}}\int_{\mathbb{R}}\left(i\textbf{I}_{A_{t}}(x,s)\gamma-b^{2}\textbf{I}_{A_{t}}(x,s)z+\int_{\mathbb{R}}i \textbf{I}_{A_{t}}(x,s)y e^{iz \textbf{I}_{A_{t}}(x,s) y}-i\textbf{I}_{[-1,1]}(y)\textbf{I}_{A_{t}}(x,s)y\nu(dy)\right)dxds\right)^{3}\bigg|_{z=0}
\end{equation*}
\begin{equation*}
+ \frac{\partial}{\partial z}\exp\left(\int_{\mathbb{R}}\int_{\mathbb{R}}\left(iz\textbf{I}_{A_{t}}(x,s)\gamma-\frac{1}{2}b^{2}\textbf{I}_{A_{t}}(x,s)z^{2}+\int_{\mathbb{R}}e^{iz\textbf{I}_{A_{t}}(x,s) y}-1 -i\textbf{I}_{[-1,1]}(y)\textbf{I}_{A_{t}}(x,s)z y\nu(dy)\right)dxds\right)
\end{equation*}
\begin{equation*}
3\left(\int_{\mathbb{R}}\int_{\mathbb{R}}\left(i\textbf{I}_{A_{t}}(x,s)\gamma-b^{2}\textbf{I}_{A_{t}}(x,s)z+\int_{\mathbb{R}}i \textbf{I}_{A_{t}}(x,s)y e^{iz \textbf{I}_{A_{t}}(x,s) y}-i\textbf{I}_{[-1,1]}(y)\textbf{I}_{A_{t}}(x,s)y\nu(dy)\right)dxds\right)
\end{equation*}
\begin{equation*}
\int_{\mathbb{R}}\int_{\mathbb{R}}\left(-b^{2}\textbf{I}_{A_{t}}(x,s)-\int_{\mathbb{R}}\textbf{I}_{A_{t}}(x,s)y^{2} e^{iz \textbf{I}_{A_{t}}(x,s) y}\nu(dy)\right)dxds\bigg|_{z=0}
\end{equation*}
\begin{equation*}
+ \frac{\partial}{\partial z}\exp\left(\int_{\mathbb{R}}\int_{\mathbb{R}}\left(iz\textbf{I}_{A_{t}}(x,s)\gamma-\frac{1}{2}b^{2}\textbf{I}_{A_{t}}(x,s)z^{2}+\int_{\mathbb{R}}e^{iz\textbf{I}_{A_{t}}(x,s) y}-1 -i\textbf{I}_{[-1,1]}(y)\textbf{I}_{A_{t}}(x,s)z y\nu(dy)\right)dxds\right)
\end{equation*}
\begin{equation*}
\int_{\mathbb{R}}\int_{\mathbb{R}}\left(-i\int_{\mathbb{R}}\textbf{I}_{A_{t}}(x,s)y^{3} e^{iz \textbf{I}_{A_{t}}(x,s) y}\nu(dy)\right)dxds\bigg|_{z=0}
\end{equation*}
\begin{equation*}
=\left(\int_{\mathbb{R}}\int_{\mathbb{R}}\left(\textbf{I}_{A_{t}}(x,s)\gamma+\int_{|y|>1}\textbf{I}_{A_{t}}(x,s)y\nu(dy)\right)dxds\right)^{4}
\end{equation*}
\begin{equation*}
+6 \left(\int_{\mathbb{R}}\int_{\mathbb{R}}\left(\textbf{I}_{A_{t}}(x,s)\gamma+\int_{|y|>1}\textbf{I}_{A_{t}}(x,s)y\nu(dy)\right)dxds\right)^{2} \int_{\mathbb{R}}\int_{\mathbb{R}}\left(b^{2}\textbf{I}_{A_{t}}(x,s)+\int_{\mathbb{R}}\textbf{I}_{A_{t}}(x,s)y^{2}\nu(dy)\right)dxds
\end{equation*}
\begin{equation*}
+3 \left(\int_{\mathbb{R}}\int_{\mathbb{R}}\left(b^{2}\textbf{I}_{A_{t}}(x,s)+\int_{\mathbb{R}}\textbf{I}_{A_{t}}(x,s)y^{2}\nu(dy)\right)dxds\right)^{2}
\end{equation*}
\begin{equation*}
+4\left(\int_{\mathbb{R}}\int_{\mathbb{R}}\left(\textbf{I}_{A_{t}}(x,s)\gamma+\int_{|y|>1}\textbf{I}_{A_{t}}(x,s)y\nu(dy)\right)dxds\right)\int_{\mathbb{R}}\int_{\mathbb{R}}\left(\int_{\mathbb{R}}\textbf{I}_{A_{t}}(x,s)y^{3} \nu(dy)\right)dxds
\end{equation*}
\begin{equation*}
+ \int_{\mathbb{R}}\int_{\mathbb{R}}\left(\int_{\mathbb{R}}\textbf{I}_{A_{t}}(x,s)y^{4}\nu(dy)\right)dxds
\end{equation*}
\begin{equation*}
=Leb(A_{t})^{4} \left(\gamma+\int_{|y|>1} y\nu(dy)\right)^{4}+Leb(A_{t})^{3}\left(\gamma+\int_{|y|>1} y\nu(dy)\right)^{2}\left(b^{2}+\int_{\mathbb{R}} y^{2}\nu(dy)\right)
\end{equation*}
\begin{equation*}
+Leb(A_{t})^{2} \left[\left(b^{2}+\int_{\mathbb{R}} y^{2}\nu(dy)\right)^{2}+\left(\int_{\mathbb{R}} y^{3}\nu(dy)\right)\left(\gamma+\int_{|y|>1} y\nu(dy)\right)\right]+Leb(A_{t}) \left(\int_{\mathbb{R}} y^{4}\nu(dy)\right)
\end{equation*}
Moreover, by similar computations we have that
\begin{equation*}
\mathbb{E}\left[(X_{t}-\mathbb{E}[X_{t}])^{4}\right]=Leb(A_{t})^{2} \left(b^{2}+\int_{\mathbb{R}} y^{2}\nu(dy)\right)^{2}+Leb(A_{t}) \left(\int_{\mathbb{R}} y^{4}\nu(dy)\right).
\end{equation*}

\end{document}